\DeclarePairedDelimiter\floor{\lfloor}{\rfloor}
\numberwithin{equation}{section}
\newtheorem{theorem}{Theorem}[section]
\newtheorem{corollary}[theorem]{Corollary}
\newtheorem{lemma}[theorem]{Lemma}
\newtheorem{proposition}[theorem]{Proposition}
\theoremstyle{remark}
\newtheorem{definition}[theorem]{Definition}
\newtheorem{remark}[theorem]{Remark}
\newtheorem{example}[theorem]{Example}
\let\oldproofname=\proofname
\renewcommand{\proofname}{\rm\bf{\oldproofname}}
\newcommand{\KR}{\text{KR}}
\newcommand{\lip}{\text{lip}}
\newcommand{\Lip}{\text{Lip}}
\newcommand{\supp}{\text{supp}}
\newcommand{\cB}{\mathcal{B}}
\newcommand{\cC}{\mathcal{C}}
\newcommand{\cD}{\mathcal{D}}
\newcommand{\cE}{\mathcal{E}}
\newcommand{\cF}{\mathcal{F}}
\newcommand{\cH}{\mathcal{H}}
\newcommand{\cI}{\mathcal{I}}
\newcommand{\cL}{\mathcal{L}}
\newcommand{\cM}{\mathcal{M}}
\newcommand{\cN}{\mathcal{N}}
\newcommand{\cP}{\mathcal{P}}
\newcommand{\cS}{\mathcal{S}}
\newcommand{\cW}{\mathcal{W}}
\newcommand{\EE}{\mathbb{E}}
\newcommand{\bN}{\mathbb{N}}
\newcommand{\PP}{\mathbb{P}}
\newcommand{\bZ}{\mathbb{Z}}
\renewcommand{\bN}{\mathbf{N}}
\newcommand{\bR}{\mathbf{R}}
\newcommand{\bS}{\mathbf{S}}
\renewcommand{\bZ}{\mathbf{Z}}
\newcommand{\fB}{\mathfrak{B}}
\newcommand{\fD}{\mathfrak{D}}
\newcommand{\fG}{\mathfrak{G}}
\newcommand{\dd}{\mathop{}\!\mathrm{d}}
\newcommand{\vertiii}[1]{{\left\vert\kern-0.25ex\left\vert\kern-0.25ex\left\vert #1 
		\right\vert\kern-0.25ex\right\vert\kern-0.25ex\right\vert}}
\title{Pathwise Regularisation of Singular Interacting Particle Systems and their Mean Field Limits}
\author{Fabian A. Harang \and Avi Mayorcas}
\date{November 2021}
\address{Fabian A. Harang: 
 Department of Economics, BI Norwegian Business School,N-0442, Oslo, Norway 
 \\
 and Department of Mathematics, University of Oslo, Norway}
\email{fabian.a.harang@bi.no} 
\address{Avi J. Mayorcas: DPMMS, University of Cambridge, CMS, Wilberforce Road, CB3 0WB, UK.}
\email{am3015@cam.ac.uk} 
\thanks{\emph{Acknowledgments.} 
We are grateful to the anonymous referees for their helpful comments which have greatly improved the manuscript. FH gratefully acknowledges financial support from the STORM project 274410, funded by the Research Council of Norway. AM was supported by the EPSRC Centre For Doctoral Training
in Partial Differential Equations: Analysis and Applications [grant number EP/L015811/1].}
\begin{document}

\maketitle

\begin{abstract}
  We investigate the regularizing effect of certain perturbations by noise in singular interacting particle systems under the mean field scaling. In particular, we show that the addition of a suitably irregular path can regularise these dynamics and we recover the McKean--Vlasov limit under very broad assumptions on the interaction kernel; only requiring it to be controlled in a possibly distributional Besov space. In the particle system we include two sources of randomness, a common noise path $Z$ which regularises the dynamics and a family of idiosyncratic noises, which we only assume to converge in mean field scaling to a representative noise in the McKean--Vlasov equation.
\end{abstract}

{
\hypersetup{linkcolor=black}
 \tableofcontents 
}
\section{Introduction}
Interacting particle systems of the form
\begin{equation}\label{eq:IntroPartSystem}
  \begin{cases}
  \displaystyle
  \dd X_t^i = \frac{1}{N}\sum_{j=1}^N K(X^i_t-X^j_t)\,\dd t + \dd B_t^i,\\
  X^i_0 = \xi^i,
  \end{cases}
  \quad \text{ for }i=1,\ldots,N,
\end{equation}
are objects of great interest in topics as diverse as mathematical physics, biology, the social sciences, computer science and random matrix theory, cf. \cite{jabin_wang_17,hambly_ledger_sojmark_19, fournier_jourdain_17,gomes_stuart_wolfram_19,friesen_kutoviy_20,tzen_raginsky_20,chafai_15,serfaty_20}. A typical setting is to take $(B^i)_{i=1}^N$ a family of i.i.d standard Brownian motions, $(\xi^i)_{i=1}^N$ a family of i.i.d $\bR^d$ valued random variables and $K:\bR^d \setminus\{0\}\rightarrow \bR^d$ a possibly singular interaction potential. If $K$ is ill-defined at the origin then it is either necessary to modify the sum to exclude the diagonal $i=j$ or work with an altered kernel defined to be zero at zero. In the rest of the text we will leave these modifications implicit since our regularised kernels will always be well defined and finite at the origin. When $K$ is Lipschitz continuous the classical existence and uniqueness results for ordinary or stochastic differential equations ($O/SDEs$) apply and global well-posedness of the ensemble $(X^i)_{i=1}^N$ is ensured. In this case it can also be established (see \cite{sznitman_89}) that the particle system converges in law to the solution of a McKean--Vlasov equation,
\begin{equation}\label{eq:IntroGenMKV2}
	\begin{cases}
		\dd X_t = (K \ast \mu_t)(X_t) \,\dd t +  \dd B_t,&\\
		\mu_t = \cL(X_t),&\\
		X_0 \sim \cL(\xi^1).
	\end{cases}
\end{equation}
In \eqref{eq:IntroGenMKV2} the complexity of $N$ equations has been replaced by a single, non-linear equation. In many cases this simplification is advantageous for both computation and analysis. It is therefore natural to ask whether the same approximation result holds for $K$ less than Lipschitz. This question has been addressed in a number of specific cases, cf. \cite{fournier_hauray_mischler_14,duerinckx_16,holding_16,jabin_wang_18,bresch_jabin_wang_19,serfaty_20}, we refer to \cite{bossy_05_particle,chaintron_diez_21_review} and the references therein for a general survey of such results. In this article we address this question in a pathwise regularisation by noise context, inspired by the recent works, \cite{catellier_gubinelli_16,harang_20_cinfinity, galeati_gubinelli_22_noiseless,galeati_harang_20} concerning pathwise regularisation for O/SDEs. Instead of treating a particular example we consider a wide class of problems, making very general assumptions on $K$, $B$ and $\xi$ and show that the same regularisation by noise approach can be applied across this class.\\ \par
More precisely, we show that given a $K$ in any Besov space and $\xi^{(N)}:=(\xi^i)_{i=1}^N$, $B^{(N)}:=(B^i)_{i=1}^N$ suitable $\bR^d$ and $C_T$ valued random variables, there exists a family of irregular paths, $t\mapsto Z_t\in C([0,\infty);\bR^d)$ such that the new particle system
\begin{equation}\label{eq:IntroRegularisedPartSystem}
  \begin{cases}
  \displaystyle
  \dd Y_t^i = \frac{1}{N}\sum_{j=1}^N K(Y^i_t-Y^j_t+Z_t)\,\dd t + \dd B_t^i,\\
  Y^i_0 = \xi^i,
  \end{cases}
  \quad \text{ for }i=1,\ldots,N,
\end{equation}
enjoys the same well-posedness and mean field convergence properties as hold for \eqref{eq:IntroPartSystem} when $K$ is Lipschitz. We emphasise that regarding the inputs $(\xi^{(N)}_0,B^{(N)})$ we only require that their empirical measure converges in the Wasserstein sense to $(\xi,B)$. The mean field limit of \eqref{eq:IntroRegularisedPartSystem} that we establish takes the form of a perturbed McKean--Vlasov problem.
  \begin{equation}\label{eq:IntroYEquation2}
	\begin{cases}
		\dd Y_t= \left(K(\,\cdot\,+Z_t)\ast \mu_t\right)(Y_t)\,\dd t+ \dd B_t,
		\\
		\mu_t=\cL(Y_t)
		\\
		Y_0 \sim \cL(\xi^1).
	\end{cases}
\end{equation}
For fixed $N>0$, and any $i=1,\ldots,N$, if we let $X^i:=Y^i+Z$, with $(Y^i)_{i=1}^N$ solving \eqref{eq:IntroRegularisedPartSystem}, formally we see that
\begin{equation}\label{eq:IntroRegularisedPartSystemX}
  \begin{cases}
  \displaystyle
  \dd X^i_t = \frac{1}{N}\sum_{j=1}^N K(X^i_t-X^j_t+Z_t)\,\dd t + \dd B^i_t+\dd Z_t,\\
  X^i_0 = \xi^i,
  \end{cases}
  \quad \text{ for }i=1,\ldots,N
\end{equation}
and the process $X:= Y+Z$, with $Y$ solving \eqref{eq:IntroYEquation2}, formally solves
\begin{equation}\label{eq:IntroXEquation2}
	\begin{cases}
		\dd X_t= \left(K \ast \mu_t\right)(X_t)\,\dd t+ \dd B_t+ \dd Z_t,
		\\
		\mu_t=\cL(X_t-Z_t)
		\\
		X_0 \sim \cL(\xi^1).
	\end{cases}
\end{equation}
Comparing \eqref{eq:IntroXEquation2} with \eqref{eq:IntroGenMKV2} we see that we have introduced both an additive noise into the dynamics and a shift inside the law. Considering the particle system, \eqref{eq:IntroRegularisedPartSystemX} we see that this shift in the law is necessary if we hope to obtain a regularising effect due to the additive noise. Without shifting the law, the pairwise interaction becomes $K(X_t^i-X_t^j) = K(Y^i_t-Y^j_t$), which is no better behaved. This relates to the well known idea that additive common noise cannot have a regularising effect on singular particle systems, see \cite{delarue_flandoli_vincenzi_13} for a discussion in one dimension. Since in this work we are primarily concerned with regularising the particle system and proving its mean field limit we focus on the formulation \eqref{eq:IntroRegularisedPartSystem}, \eqref{eq:IntroYEquation2}.\\ \par
The idea that noise can regularise certain ill-posed dynamics stems back to the early works of A. K. Zvonkin and A. J. Veretennikov, \cite{Zvonkin_74, veretennikov_81} which employed PDE theory and It\^o calculus to obtain strong well-posedness, in a probabilistic sense for the SDE
\begin{equation}\label{eq:RegularisedSDE}
	\dd X_t = f(X_t)\,\dd t + \dd B_t,
\end{equation}
with $f \in L^\infty$. This is in contrast to the Cauchy--Lipschitz theory for ODEs which requires $f\in W^{1,\infty}$ to ensure well-posedness. Since these early works the subject has received a much attention, with authors exploring similar results for time dependent drift, \cite{krylov_rockner_05}, methods based on Malliavin calculus, \cite{meyerbrandis_proske_10} and considering more general noise terms, \cite{banos_nilssen_proske_19}. In \cite{davie_07}, A. M. Davie presented a similar result in a pathwise setting, which in contrast to the other works mentioned does not involve averaging over the random perturbation. Davie established that for, \eqref{eq:RegularisedSDE} with $f\in L^\infty$ and almost every path $B$ of the Brownian motion, there exists a unique continuous path $X \in C([0,T];\bR^d)$ satisfying \eqref{eq:RegularisedSDE} in an integral sense. This notion of pathwise regularisation has recently been extensively developed, cf. \cite{catellier_gubinelli_16, harang_20_cinfinity,galeati_gubinelli_22_noiseless, galeati_gubinelli_20_Prevalence, galeati_harang_20, gerencser_20} for a selection of works. Our main result is presented from a pathwise perspective, and we give more background to the approach in Section \ref{sec:PathwiseRegularisation}. For now we only give a brief intuition behind the  regularising effect in our context.\\ \par 
Consider the particle system, \eqref{eq:IntroRegularisedPartSystem} and let us assume that $K$ is singular at the origin. Then, if $t\mapsto Z_t$ is a $C^1$ path we have $K(Y^i_t-Y^j_t+Z_t) \rightarrow \infty$ as $Y^i_t-Y^j_t +Z_t\rightarrow 0$. So the singularity in the equation remains. However, if $t\mapsto Z_t$ is a random path, whose trajectories oscillate sufficiently fast then we may imagine that the event $|Y^i_t-Y^j_t +Z_t|<\varepsilon $ occurs with a very small probability. If this probability decreases sufficiently fast as $\varepsilon\rightarrow 0$, compared with the blow-up rate of $K$ near zero, then we may hope to show that the drift terms of \eqref{eq:IntroRegularisedPartSystem} are suitably bounded. The theory of averaged fields and non-linear Young equations makes this idea rigorous and extends it to more general regularising process. This is discussed in Section \ref{sec:PathwiseRegularisation}.\\ \par
Regularisation by noise results for interacting particle systems and McKean--Vlasov equations have been obtained in a number of specific settings. In \cite{flandoli_gubinelli_priola_11} it is shown that the vortex dynamics associated to the Euler equation in two dimensions are globally well-posed when perturbed by suitable \textit{turbulent} noise, as opposed to the well-posedness from almost every initial data obtained by C. Marchioro, M. Pulvirenti, cf. \cite{marchioro_pulvirenti_94}. A selection by noise result for a system of Vlasov point charges in one dimensions was obtained in \cite{delarue_flandoli_vincenzi_13}. Similarly the works, \cite{duerinckx_16,holding_16,fournier_jourdain_17,chaudru_frikha_21_backward,chaudru_frikha_22_wasserstein_PDE,lacker_21_hierarchies,han_22_entropic} can be seen as regularisation by noise phenomena, since the same results are not known to hold for the same systems without Brownian noise. Finally we mention the work of V. Marx \cite{marx_20} which is in a very different direction, concerning regularisation by noise for McKean--Vlasov equations viewed as differential equations on the space of probability measures perturbed by the Wasserstein diffusion. 
To the best of our knowledge the result presented here is the first to consider a pathwise regularisation of interacting particle systems in the sense of Davie with corresponding convergence to the associated McKean--Vlasov problem. We mention also two very recent articles by L. Galeatti and both authors of the current article \cite{GalHarMay_22_benchmark,GalHarMay_22fBm} where a mixture of pathwise and probabilistic techniques are used to prove existence and uniqueness of general distribution dependent SDEs with singular interactions and driven by an additive fractional Brownian motion. However, the question of interacting particle systems and mean field approximation is not considered there. Finally, we mention that a pathwise approach to interacting particle systems and McKean--Vlasov equations has recently been considered in the setting of rough differential equations, \cite{cass_lyons_14,bailleul_catellier_delarue_20_solving,bailleul_catellier_delarue_21_propagation,coghi_nilssen_21_RoughNonlocal}.
\\ \par
Methodologically we focus primarily on studying \eqref{eq:IntroYEquation2}. We handle the random dynamics coming from $B_t$ in a pathwise setting, using the framework of \cite{coghi_deuschel_friz_maurelli_20} and the trick of Tanaka, \cite{tanaka_84}, also used in that paper, to cast the mean field approximation as a stability result for \eqref{eq:IntroYEquation2}. We highlight this is possible since we only consider $B^{(N)}$ and $B$ as additive perturbations. We handle the drift term, $(K(\,\cdot\,+Z_t )\ast \mu_t)(Y_t)$, as a non-linear Young integral with measure flow convolutions. We present this extension in Section \ref{subsec:NLYI}. To summarise our main contributions, we make very few assumptions on $K$, $B$ and $\xi$ and the approximating sequences $B^{(N)},\,\xi^{(N)}$. We may in fact allow $\xi$ and/or $B$ to be deterministic, in particular we cover the case $B^{(N)}\equiv B\equiv 0$. This generality is compensated by restricting our setting to including the $B$ terms additively and imposing restrictions on the regularising path $Z$. Still, for given data $(K,B)$, the class of applicable paths $Z$ is vast, with precise assumptions described in the next section. With regards the topic of non-linear Young integration we believe the extension to integrands involving measure flows is of independent interest.
%
%
\subsection{Main results}\label{sec:PWiseMainResults}
Before stating our main results, we give a rigorous definition of solution to the perturbed McKean--Vlasov equation \eqref{eq:IntroYEquation2}.  An explanation of the necessary notations used below can be found in Subsection \ref{subsec:Notation}.\\ \par
We fix $T>0$, $(\gamma,\eta)\in (1/2,1)\times (0,\infty)$ such that 
\begin{equation}\label{eq:EtaGammaMainResults}
    (\eta \wedge \gamma)+ \gamma>1
\end{equation}
and $(\Omega,\cF,\PP)$ an abstract probability space. Although \eqref{eq:EtaGammaMainResults} is equivalent to the requirement $\eta +\gamma >1$ for $(\gamma,\eta)\in (1/2,1)\times (0,\infty)$ we chose to state the constraint in this form as it makes clear the role that $\eta \wedge \gamma$ plays in our analysis. For a distribution $K\in \cS'(\bR^d;\bR^d)$ and $Z:C([0,T];\bR^d)$ we define the averaged field $\Gamma K:[0,T]\times \bR^d \rightarrow \bR^d$ formally by 
\begin{equation}\label{Formal avg field}
  \Gamma_t K(x)=\int_0^tK(x+Z_r)\dd r, 
\end{equation}
and we write $\Gamma_{s,t}K(x)=\Gamma_tK(x)-\Gamma_sK(x)$. It is this object that will play a central role in the regularization procedure, and a rigorous discussion of this object is given in Section \ref{sec:PathwiseRegularisation}.
\begin{definition}\label{def:solution}
	Let $q,p\geq 1$, $(\xi,B)\in L^q(\Omega;\bR^d)\times L^{p}(\Omega;\cC_T^\eta)$, with $B$ a zero at zero path and assume we are given $K\in \cS^\prime(\bR^d;\bR^d)$ and zero at zero path $Z\in C_T$ such that the associated averaged field, $\Gamma K$ (see \eqref{Formal avg field} and Def. \ref{def: avg operator}), is contained in $\cC^\gamma_T\cC^2(\bR^d;\bR^d)$. Then we say that $Y$ is a solution to \eqref{eq:IntroYEquation2} if  $Y\in L^{q\wedge p}(\Omega;\cC^{\gamma\wedge \eta}_T)$ and $Y$ solves the generalised McKean--Vlasov equation 
	\begin{equation}\label{eq:IntroNLYEquation}
		\begin{cases}
			\displaystyle Y_t=\xi+\int_0^t\left(\Gamma_{\dd r} K\ast \mu_r\right)(Y_r)+B_t,
			\\
			\mu_r=\cL(Y_r).
		\end{cases}
	\end{equation}
	where the drift term is properly defined as a measure dependent non-linear Young integral in Section \ref{subsec:NLYI}, see Lemma \ref{lem:YoungIntegration} for example and in particular for an explanation of the notation $\Gamma_{\dd s}K$.
\end{definition}
	Proofs of Theorems \ref{th:IntroGeneralMainTheorem} and \ref{th:IntroMeanFieldApprox} and Corollary \ref{cor:RegularizeClassicalMKV} stated below are completed in Section \ref{sec:MainResultsProofs}.
\begin{theorem}\label{th:IntroGeneralMainTheorem}
Let $\sigma\in \bR$, $q,r \in [1,\infty]$. Assume we are given a pair $(\xi,B)\in L^1(\Omega;\bR^d) \times L^{p}(\Omega;\cC^\eta_T)$ for all $p\geq 1$, such that $B_0=0$, a $K\in \cB^\sigma_{r,q}(\bR^d;\bR^d)$ and a zero at zero path $Z \in C_T$ such that the associated averaged field $\Gamma K\in \cC_T^\gamma\cC^2$. Then there exists a unique solution $Y\in L^1(\Omega;\cC^{\eta\wedge \gamma}_T)$ to the equation
	\begin{equation}\label{eq:intro thm equation}
		\begin{cases}
			\dd Y_t = (K(\cdot +Z_t) \ast \mu_t)(Y_t) \,\dd t + \dd B_t,&\\
			\mu_t=\cL(Y_t),&\\
			Y_0 = \xi,
		\end{cases}
	\end{equation}
	in sense of Definition \ref{def:solution}. %
	Furthermore, if $(\xi^1,B^1),\,(\xi^2,B^2) \in L^1(\Omega;\bR^d)\times L^{p}(\Omega;\cC^\eta_T)$ for all $p\geq 2$ with $(\xi^1-\xi^2)\perp (B^1,B^2)$ and $Y^1,\,Y^2 \in L^1(\Omega;\cC^{\eta\wedge \gamma}_T)$ are the corresponding solutions to \eqref{eq:intro thm equation}, then there exists a constant $C=C(T,\Gamma,\gamma,\eta)>0$ such that,
	\begin{equation}\label{eq:IntroYProcessStable}
		\cW_{1;\cC^{\eta \wedge \gamma}_T}(\cL(Y^1),\cL(Y^2)) \leq C \left(\cW_{1;\bR^d }(\cL(\xi^1),\cL(\xi^2))+ \cW_{2; \cC^{\eta}_T}(\cL(B^1),\cL(B^2))\right).
	\end{equation}
\end{theorem}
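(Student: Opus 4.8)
\textbf{Proof proposal for Theorem \ref{th:IntroGeneralMainTheorem}.}

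The plan is to set up a fixed-point argument on the space of laws of continuous processes and then extract stability from the same contraction estimate. First I would fix a measure flow $\mu = (\mu_t)_{t\in[0,T]}$ in a suitable subset of $\cC([0,T];\cP_1(\bR^d))$ — say those flows arising as the law of some $\cC^{\eta\wedge\gamma}_T$-valued random variable with controlled moments — and consider the linear (in $Y$, but non-autonomous) equation $Y_t = x + \int_0^t(\Gamma_{\dd r}K \ast \mu_r)(Y_r) + B_t$. The key observation is that $x \mapsto (\Gamma_{s,t}K \ast \mu_r)(x)$ inherits the regularity of $\Gamma K$, namely it lies in $\cC^\gamma_T\cC^\alpha$ with norm bounded by $\vertiii{\Gamma K}_{\cC^\gamma_T\cC^\alpha}$ uniformly in the flow (convolution with a probability measure is an $L^1$-contraction on every Hölder space, so no regularity of $\mu$ is needed here — this is exactly why the assumption on $\Gamma K$ rather than on $K$ is the right one). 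Hence by the non-linear Young theory developed in Section \ref{subsec:NLYI} (Lemma \ref{lem:YoungIntegration} and the accompanying a priori bounds), together with the condition $(\eta\wedge\gamma)+\gamma > 1$ from \eqref{eq:EtaGammaMainResults} which guarantees the non-linear Young integral against $B + \Gamma K\ast\mu$ is well defined on $\cC^{\eta\wedge\gamma}_T$, there is a unique solution $Y = Y[\mu] \in L^p(\Omega;\cC^{\eta\wedge\gamma}_T)$, with a moment bound depending only on $T$, $\vertiii{\Gamma K}$, $\gamma$, $\eta$ and the $L^p$-norm of $(x,B)$.

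Next I would define the map $\Phi(\mu) := \big(\cL(Y[\mu]_t)\big)_{t\in[0,T]}$ and show it is a contraction on a closed ball of $\cC([0,T];\cP_1(\bR^d))$ — or, more conveniently, directly on the Wasserstein distance $\cW_{1;\cC^{\eta\wedge\gamma}_T}$ between path-laws, using the Tanaka-type coupling: couple $Y[\mu^1]$ and $Y[\mu^2]$ through the same $(x,B)$, estimate $\norm{Y[\mu^1]-Y[\mu^2]}_{\cC^{\eta\wedge\gamma}_T}$ pathwise. The difference of the two non-linear Young integrals splits into a term with the same kernel $\Gamma K\ast\mu^1$ acting on $Y[\mu^1]-Y[\mu^2]$ (controlled by a Gr\"onwall/sewing argument on short intervals, absorbing the $\cC^{\eta\wedge\gamma}$-seminorm of the difference with a small constant by choosing the interval length small, then iterating over $[0,T]$), plus a term of the form $\big((\Gamma K\ast\mu^1 - \Gamma K\ast\mu^2)(Y[\mu^2])\big)$, which by the Lipschitz-in-measure bound $\vertiii{\Gamma K\ast\mu^1 - \Gamma K\ast\mu^2}_{\cC^\gamma_T\cC^{\alpha-1}} \lesssim \vertiii{\Gamma K}_{\cC^\gamma_T\cC^\alpha}\,\sup_t\cW_1(\mu^1_t,\mu^2_t)$ feeds back $\cW_{1;\cC^{\eta\wedge\gamma}_T}(\cL(Y[\mu^1]),\cL(Y[\mu^2]))$ times a constant. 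On a small enough time interval the constant is $<1$, giving a contraction; the standard concatenation argument then upgrades this to existence and uniqueness of a fixed point on all of $[0,T]$, and the fixed point is by construction the unique solution of \eqref{eq:intro thm equation}.

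For the stability estimate \eqref{eq:IntroYProcessStable} I would run essentially the same computation but now comparing the genuine solutions $Y^1, Y^2$ associated to data $(x^1,B^1)$ and $(x^2,B^2)$. Take an optimal (or near-optimal) coupling of $(x^1,B^1)$ and $(x^2,B^2)$ realising $\cW_{1;\bR^d}(\cL(x^1),\cL(x^2))$ and $\cW_{2;\cC^\eta_T}(\cL(B^1),\cL(B^2))$ simultaneously — one can couple the $\bR^d$-marginal and the path-marginal independently, or jointly; only the marginal costs enter — and estimate $\norm{Y^1-Y^2}_{\cC^{\eta\wedge\gamma}_T}$ pathwise. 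Now there are three contributions: the initial datum $x^1-x^2$; the additive noise $B^1-B^2$, which enters the non-linear Young integral both directly and through the integrator, so one needs the continuity of the non-linear Young integral with respect to its driver in $\cC^\eta_T$ (this is where the exponent $2p$ and $\cW_2$ on the $B$-side come from — the sewing bound produces a term quadratic in $\norm{B^1-B^2}_{\cC^\eta}$, or a cross term, requiring an $L^{2p}$ moment to close by Cauchy–Schwarz); and the measure term $\Gamma K\ast(\mu^1 - \mu^2)$ which again returns $\cW_{1;\cC^{\eta\wedge\gamma}_T}(\cL(Y^1),\cL(Y^2))$ with a constant that is $<1$ on short intervals. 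Taking $L^p(\Omega)$-norms, using that $\cW_1(\cL(Y^1),\cL(Y^2)) \le \big(\EE\norm{Y^1-Y^2}^p\big)^{1/p}$, absorbing the self-referential term, and iterating the resulting estimate over a partition of $[0,T]$ of mesh small enough, yields \eqref{eq:IntroYProcessStable} with $C = C(T,\Gamma,\gamma,\eta)$.

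The main obstacle I anticipate is the bookkeeping in the non-linear Young stability step: one must show that the sewing/Young bounds for $\int (\Gamma_{\dd r}K\ast\mu_r)(Y_r)$ depend on the measure flow \emph{only} through $\sup_t\cW_1(\mu^1_t,\mu^2_t)$ and linearly in $\vertiii{\Gamma K}_{\cC^\gamma_T\cC^\alpha}$, with all constants uniform over the ball of flows — i.e. that the measure-dependent non-linear Young integral of Section \ref{subsec:NLYI} genuinely enjoys both a Lipschitz-in-driver and a Lipschitz-in-measure estimate in the correct norms, and that the quadratic-in-$B$ remainder is the only place where a higher moment is needed. Once Lemma \ref{lem:YoungIntegration} is in hand this is routine but delicate; everything else is a by-now-standard Picard/Tanaka scheme.
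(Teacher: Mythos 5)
Your proposal follows essentially the same route as the paper: freeze the measure flow and solve the resulting equation via the measure-dependent non-linear Young integral of Lemma \ref{lem:YoungIntegration}, close the McKean--Vlasov fixed point $\mu=\cL(Y^\mu)$ by a short-time contraction and iteration (Theorems \ref{th:RDEWellPosedFixedMeasure} and \ref{th:AbstractMKVSol}), and derive \eqref{eq:IntroYProcessStable} by coupling the data through (near-)optimal transport plans with Cauchy--Schwarz on the random prefactor, exactly as in Lemma \ref{lem:SolStabilityBounds} and Theorem \ref{th:AbstractMKVSolStable}. One small correction: although the spatial regularity of $\Gamma K\ast\mu_r$ is indeed uniform in the measure, the sewing construction genuinely needs time-H\"older regularity of $t\mapsto\mu_t$ in the Kantorovich--Rubinstein norm (it controls the term $\Gamma_{z,v}\ast(\mu_u-\mu_z)$ in the germ's coboundary); your restriction to flows arising as laws of $\cC^{\eta\wedge\gamma}_T$-valued random variables supplies this through Theorem \ref{th:HolderFlowEmbedding}, so the argument still closes.
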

There exist a large number of continuous paths $Z$ providing the regularizing effect required by the above theorem, see \cite{harang_20_cinfinity,galeati_gubinelli_20_Prevalence,harang_ling_21_VoltLevy}. We will illustrate an example of a class of random paths with exactly such regularizing properties in Corollary \ref{cor:RegularizeClassicalMKV}. With the above general theorem at hand we turn to some specific applications. First we describe the implications of Theorem \ref{th:IntroGeneralMainTheorem} in the context of mean field approximations to \eqref{eq:intro thm equation}. We use the trick of Tanaka, \cite{tanaka_84}, and the fact that the noise terms $B_t^i$ are additive to obtain Theorem \ref{th:IntroMeanFieldApprox}.
\begin{theorem}[Mean Field Approximation]\label{th:IntroMeanFieldApprox}
Let $N\in \bN$ and $(\xi,B)\in L^1(\Omega;\bR^d)\times L^{p}(\Omega; \cC^\eta_T)$, $(\xi^{(N)}_0,B^{(N)})\in L^1(\Omega;\bR^{Nd})\times L^{p}(\Omega;(\cC^\eta_T)^N)$ for all $p\geq 1$ be such that $B_0=0$ and $B^{(N)}_0 =0$. In addition assume that for some $\tilde{p}\geq 1$
	\begin{equation*}
		\lim_{N\rightarrow \infty} \left(\cW_{1;\bR^d}\left(\cL_N(\xi^{(N)}(\omega)),\cL(\xi)\right) + \cW_{\tilde{p};\cC^{\eta}_T}\left(\cL_N(B^{(N)}(\omega)),\cL(B)\right) \right) = 0,\quad \PP\text{-a.s.}
	\end{equation*}
  Then with $Z$ and $\Gamma$ as in Theorem \ref{th:IntroGeneralMainTheorem}, for any $N\geq 1$ there exists a unique solution $Y^{(N)}:=(Y^{i})_{i=1}^N \in L^1(\Omega;(\cC^{\eta \wedge \gamma}_T)^N)$ to the particle system
\begin{equation}\label{eq:IntroNLYParticleEquation}
Y^i_t=\xi^i+\frac{1}{N}\sum_{j=1}^N\int_0^t \Gamma_{\dd r}K(Y_r^i-Y^j_r)\,+B^i_t,\quad \text{ for } i=1,\ldots,N.
\end{equation}
Furthermore, letting $Y$ be the unique solution to \eqref{eq:IntroNLYEquation} and assuming $\xi\perp B$,
\begin{equation*}
\lim_{N\rightarrow \infty}\cW_{1;\cC^{\eta \wedge \gamma}_T}\left(\cL_N(Y^{(N)}(\omega)),\cL(Y)\right) =0 \quad \PP\text{-a.s.}
\end{equation*}
\end{theorem}
\begin{remark}
    The assumption of convergence in Wasserstein of the data $(\xi^{(N)},B^{(N)})$ is satisfied if for example the empirical measure converges weakly $\PP$-a.s. and sufficiently high moments are uniformly integrable. In particular, if the sequences $(\xi^{i},B^i)_{i=1}^N$ are i.i.d then one obtains the required convergence. See \cite[Lem. 53 \& 54]{coghi_deuschel_friz_maurelli_20} for details.
\end{remark}
\begin{remark}
It is clear from the proofs of Theorems~\ref{th:IntroGeneralMainTheorem} \& \ref{th:IntroMeanFieldApprox} that if in addition one assumes the initial data to have finite $q$ moments for some $q>1$ then the respective solutions to the McKean--Vlasov equation and particle system will have finite $q$ moments in $\cC^{\eta\wedge \gamma}_T$. However, so as not to unnecessarily complicate the notation further we restrict ourselves to $L^1$ initial data. Furthermore, from the steps of the proofs one can see that we do not necessarily require all moments of $B$ to be finite, but only $p$ moments for $p$ that can be arbitrarily large depending on some parameters chosen in the proofs. Informally, there is a trade off between the regularising effect coming from $Z$ and the number of finite moments we require $B$ to have. Since we are mostly interested in leveraging the maximum possible regularising effect for simplicity we ask for $B$ to have all finite moments. For more details see Remark~\ref{rem:MomentRem2}.
 \end{remark}
Since Theorems \ref{th:IntroGeneralMainTheorem} and \ref{th:IntroMeanFieldApprox} are both quite general in nature, we specify our results to a particular class of McKean--Vlasov equations, that includes many physically relevant models. Recall that a distribution $K \in \cS^\prime(\bR^d;\bR^d)$ is said to be homogeneous of degree $\sigma \in \bR$ if for any $\psi \in \cS(\bR^d;\bR^d)$, defining $\psi_\lambda = \frac{1}{\lambda^d}\psi\left(\,\cdot\,/\lambda\right)$ one has
\begin{equation}
  \langle K,\psi_\lambda\rangle = \lambda^{\sigma}\langle K,\psi\rangle.
\end{equation}
A fuller discussion of such distributions is given in Appendix \ref{app:BesovSpaces}. To further simplify the setting, we only consider $B$ a random path taking values in $C^{1/2-}_T$ and we exhibit as an example a range of regularising paths $Z$ for which our results apply. We emphasise that Theorems \ref{th:IntroGeneralMainTheorem} and \ref{th:IntroMeanFieldApprox} involve more general assumptions.
\begin{corollary}\label{cor:RegularizeClassicalMKV}
Let $\sigma<0$, $\xi \in L^1(\Omega;\bR^d)$ $K$ be a homogeneous kernel of degree $\sigma$ and $B\in L^{p}(\Omega;\cC^{1/2-}_T)$ for all $p\geq 1$. Then let $(Z_t)_{t\in [0,T]}$ be a fractional Brownian motion with Hurst parameter $H\in (0,1)$ on a possibly different probability space $(\tilde{\Omega},\tilde{\cF},\tilde{\PP})$. If $H<\frac{1}{4-2\sigma}$, then there exists a set $\tilde{\cN}\subset \tilde{\Omega}$ of full measure, such that for all $\tilde{\omega}\in \tilde{\cN}$, $Z=Z(\tilde{\omega}):[0,T]\rightarrow \bR^d$ is a continuous path and under suitable remaining assumptions, the results of Theorems \ref{th:IntroGeneralMainTheorem} and \ref{th:IntroMeanFieldApprox} apply.
\end{corollary}
\begin{remark}
    Note that the assumption $B\in L^{p}(\Omega;\cC^{1/2-}_T)$ for all $p\geq 1$ covers a wide variety of stochastic processes, including but not limited to, the Brownian motion.
\end{remark}
\begin{remark}
	Regarding the threshold on the Hurst parameter, we note that in contrast to the results obtained in \cite{harang_20_cinfinity}, for a general distribution $f \in \cB^{\sigma}_{2,2}(\bR^d;\bR^d)$ which requires a Hurst parameter $H<\frac{1}{4+2d-2\sigma }$, the threshold of $H<\frac{1}{4-2\sigma}$ in Corollary~\ref{cor:RegularizeClassicalMKV} is dimension independent. This is due to the fact that $K$ is assumed to be a homogeneous distribution; see Definition \ref{eq:HomogeneousDefinition} and therefore one has $K\in \cB^{\sigma+d/p}_{p,\infty}(\bR^d;\bR^d)$ for any $d,\,p\geq 1$. See Appendix \ref{subsec:HomogeneousDist} for details. 
\end{remark}
\subsection{Structure and Outline}
In the remainder of this section we detail the additional notation used in this article. In Section~\ref{sec:PathwiseRegularisation} we recap some ideas from the theory of pathwise regularisation, averaged fields and non-linear Young integration. Readers familiar with this material should feel free to skip this section. Appendix \ref{app:BesovSpaces} contains a review of Besov spaces and homogeneous distributions. In Section~\ref{sec:MeasureFlowsRegularity} we recall the definitions and some properties of the Wasserstein distances on the space of probability measures. In Subsection~\ref{subsec:HolderMeasureFlow} we define a notion of H\"older continuity for time dependent measure valued flows which is a central tool in proving our results. The proofs of our three main results stated in Section \ref{sec:PWiseMainResults} are conducted in two major steps. Firstly we consider the abstract non-linear Young equation \eqref{eq:AbstractNLYEMKV} and then in Sections \ref{sec:AbstractMKVResults} and \ref{sec:AbstractMeanFieldLimit} we prove analogues of Theorems \ref{th:IntroGeneralMainTheorem} and \ref{th:IntroMeanFieldApprox} for the abstract equation, which makes no reference to $K$ or $Z$. Section \ref{sec:MainResultsProofs} contains the proof of our main results, in which we relate the abstract theorems proved for \eqref{eq:AbstractNLYEMKV} to the perturbed McKean--Vlasov problem \eqref{eq:intro thm equation}. Finally in Section \ref{sec:Applications} we discuss some specific models to which our results apply.
\subsection{Notations}\label{subsec:Notation}
Throughout the text we write $\bR$ for the real line, $\bR_+$ for the positive half line and $\bR^d$ for $d$-dimensional Euclidean space. We will often fix a $T \in \bR_+$ and work on intervals of the form $[0,T]\subset \bR_+$. We use the symbol $\lesssim$ 
to denote that the quantity on the left is less than or equal to 
the quantity on the right up to a constant that is either unimportant or depends on quantities that are fixed in the context. When we wish to make the dependence on these quantities more specific we either write $\lesssim_{d,\kappa}$ or $\leq C(K,p,d)$. These constants will always be allowed to change from line to line without further indication.

For mappings $f :\bR^d \rightarrow \bR$ we write $\nabla f = (\partial_1 f,\ldots, \partial_d f) \in \bR^d$ for the gradient and for mappings $g:\bR^d\rightarrow \bR^d$ we write $\nabla \cdot g = \sum_{i=1}^d \partial_ i g$ for the sum of partial derivatives which defines the divergence. For vectors $k \in \bN^d$ we use standard multi-index notation and for $a \geq 0$ and $n\in \bN_{\geq 1}$ we write $C^a(\bR^d;\bR^n)$ for the set of all functions $f:\bR^d\rightarrow \bR^n$ such that $D^{k}f := \prod_{i=1}^d\partial_{k_i} f$ is continuous for all $|k|\leq a$. When $a=\infty$ we write $C^\infty(\bR^d;\bR^n) = \cup_{a \geq 0} C^a(\bR^d;\bR^n)$. We write $C^a_c(\bR^d;\bR^n)$ for the set of $C^a$ functions with compact support on $\bR^d$ and $C^a_b(\bR^d;\bR^n)$ for the set of bounded $C^a$ functions. We write $\cS(\bR^d;\bR^n)$ for the space of Schwarz functions $f:\bR^d \rightarrow \bR^n$ such that for any $m \geq 0$ and $a\geq 0$ we have
	\begin{equation*}
		\sup_{|k|=a}\sup_{x \in \bR^d} |x^mD^kf(x)|<\infty.
	\end{equation*}
This equips the space $\cS(\bR^d;\bR^n)$ with the structure of a Fr\'echet space and we write $\cS^\prime(\bR^d;\bR^n)$ for its dual, the space of tempered distributions. From now on we leave the range of such function spaces implicit and when the context is clear we will remove the explicit dependence on the domain as well.

For $p\geq 1$, we write $L^p(\bR^d)$ for the usual spaces of $p$-integrable real functions and when $p=\infty$, the space of essentially bounded functions. For $a,\,p\geq 1,$ we write $W^{a,p}(\bR^d)$ for the space of functions with $p$-integrable weak derivatives up to order $a$. For $p \in [1,\infty)$ we write $\ell^p$ for the set of sequences $\left(f_m\right)_{m=1}^\infty$ such that $\sum_{m=1}^\infty |f_m|^p <\infty$ and $\sup_{m\geq 1}|f_m|<\infty$ for $p=\infty$. We work predominantly in the scale of Besov spaces, which we denote by $\cB^{\alpha}_{p,q}(\bR^d)$ for $\alpha \in \bR$ and $p,q \in [1,\infty)$. When $p=q=\infty$ and $\alpha \in \bR\setminus \bN$, we write simplify by writing $\cC^\alpha(\bR^d)$ for the H\"older--Besov spaces and when $p=q=2$ we write $\cH^\alpha(\bR^d)$. We define these spaces properly in Appendix \ref{app:BesovSpaces}.

For $E$ a Banach space we write $\cC^\alpha(\bR_+;E)$ with $\alpha\in (0,1)$ for the space of $\alpha$-H\"older maps $X:\bR_+\rightarrow E$. For $T>0$ and maps $X:[0,T]\rightarrow E$ we write $\cC^\alpha_{T}E:=\cC^\alpha([0,T];E)$ for these spaces and $\cC^\alpha_{[s,t]}E$ for maps $X:[s,t]\rightarrow E$ with $[s,t]\subset \bR_+$. When $E=\bR^d$ we simply write $\cC^\alpha_T$ (resp. $\cC^\alpha_{[s,t]}$). For a mapping $X:[0,T]\rightarrow E$ and any $0\leq s <t\leq T$ we write $X_{s,t}:=X_t-X_s$ to denote the increment. With a slight abuse of notation, we will also denote by $X_{s,t}$ a two-parameter function $X:[0,T]^2\rightarrow E$. For $n\in \bN$ and $0\leq s<t<\infty$ we define the $n$-simplex $\Delta_n^{[s,t]}$ by 
\begin{equation*}
	\Delta_n^{[s,t]}:=\{(r_1,\ldots,r_n)\in [s,t]^n\,|\,r_1\leq\dots\leq r_n\}. 
\end{equation*}
Then for $X:[s,t]^2\rightarrow E$, when we say that $[X]_{\alpha;[s,t]}<\infty$, we mean that 
	\begin{equation*}
		[X]_{\alpha;[s,t]}:= \sup_{(u,v)\in \Delta_2^{[s,t]}} \frac{\|X_{u,v}\|_E}{|v-u|^\alpha}<\infty.  
\end{equation*}
In the case when $X$ is a one parameter path we use the shorthand notation $X_{u,v}:=X_v-X_u$.
To convert this into a proper norm we add the value of $X$ at $u=s$, defining $\|X\|_{\alpha;[s,t]} = \|X_s\|_E+ [X]_{\alpha;[s,t]}$. When $s=0$ we will simply write $\|\,\cdot\,\|_{\alpha;t}$ or $[\,\cdot\,]_{\alpha;t}$. When $X_0=0$ by convention we always measure the path in $\|\,\cdot\,\|_{\alpha}$. For $\alpha>0$ we define $\cC^{\alpha-}_T:= \cap_{\alpha'<\alpha}\cC^{\alpha'}_T$. When $\alpha=0$ we write $C_T$ (resp. $C_{[s,t]}$) for the space of continuous mappings $[0,T]\rightarrow \bR^d$ (resp. $[s,t]\rightarrow \bR^d$). 
 For a space time function $\Gamma:[0,T]\times \bR^d\rightarrow \bR^d$, when the context is clear, for $\alpha,\gamma \in \bR$, we use the notation
\begin{equation*}
	\|\Gamma\|_{\gamma,\alpha}:= \|\Gamma\|_{\cC^\gamma_T\cC^\alpha(\bR^d)}.
\end{equation*}
For $(E,\cE)$ a Hausdorff topological space which we always equip with its Borel sigma algebra we let $\cM(E)$ denote the set of real valued, signed Radon measures on $E$ and we write $\cP(E)$ for the set of probability measures on $E$. For $\mu \in \cM(E)$ there exists a pair of measures $(\mu^+,\mu^-)$ such that at least one is finite, they have disjoint support and $\mu^+(A)\geq 0$ and $\mu^-(A)\leq 0$ for any Borel set $A \subseteq E$. Then we define the total variation of $\mu \in \cM(E)$ by $|\mu|:=\mu^+-\mu^-$. Given an abstract probability space $(\Omega,\cF,\PP)$ and a measurable map $X:\Omega \rightarrow E$ we write $\cL(X)$ to designate the law of $X$ which is a probability measure on $E$ such that the identity
\begin{equation*}
	\EE\left[ f(X)\right] = \int_E f \, \dd \cL(X)
\end{equation*}
holds for all $f:E\rightarrow \bR$ continuous and bounded. Given two Banach spaces $E,F$, a Borel measurable mapping $\pi :E\rightarrow F$ and a measure $\mu \in \cM(E)$, we define the push-forward of $\mu$ by $\pi$ to be $\pi \# \mu:= \mu(\pi^{-1}\,\cdot\,)\in \cM(F)$. For any measurable mapping $g :F\rightarrow \bR$ such that $ g \circ \pi:E \rightarrow \bR$ is $\dd\mu$ integrable then the push-forward satisfies
\begin{equation*}
	\int_{F} g \,\dd \pi \#\mu = \int_E g\circ \pi \,\dd \mu.
\end{equation*}
We give more detailed definitions and discussions of some of these notions and related concepts in Section \ref{sec:MeasureFlowsRegularity}. Finally for any $p\geq 1$, and $E$ a Banach space, we write $L^p(\Omega;E)$ for the set of measurable maps $X:\Omega \rightarrow E$ such that $\EE\left[\|X\|_E^p\right]<\infty$.
We write $\cW_{p,E}(\mu,\nu)$ for the $p$-Wasserstein distance between two probability measures $\mu,\,\nu$ in $\cP_p(E)$, the space of probability measures with $p$-finite moments. We give a detailed definition of these distances and discussion of their properties in Section \ref{sec:MeasureFlowsRegularity}. For three random variables $A,\,B,\,C$ if we write $A \perp (B,C)$ we mean that $A$ is independent from $B$ and from $C$.
\section{Averaged Fields and Pathwise regularisation of ODEs}\label{sec:PathwiseRegularisation}
Let $f \in \cB_{p,q}^\beta(\bR^d)$ for $\beta \in \bR$, $p,q\in [1,\infty]$ and $Z:[0,T]\rightarrow \bR^d$ be a possibly random path on an abstract probability space $(\tilde{\Omega},\tilde{\cF},\tilde{\PP})$. Then consider the formal, integral equation,
\begin{equation}\label{eq:SimpleODE}
	\tilde{X}_t^{\xi}=\xi+\int_0^t f(\tilde{X}_r^{\xi})\dd r+Z_t. 
\end{equation}
In \cite{catellier_gubinelli_16}, the authors show that if $Z$ is sufficiently irregular (exact meaning to be explained later) then \eqref{eq:SimpleODE} can be interpreted rigorously and is pathwise well-posed, even when $f$ is only a distribution. More specifically the authors show that if $Z$ is a fractional Brownian motion with Hurst parameter $H\in (0,1)$ then for a given $f \in \cC^{-\frac{1}{2H}+2}$ there exists a full measure set $\tilde{\cN} \in \tilde{\Omega}$ (depending on $f$) such that for all $\tilde{\omega} \in \tilde{\cN}$ there exists a unique solution to \eqref{eq:SimpleODE} driven by $Z(\tilde{\omega})$. This result has recently been developed further in \cite{harang_20_cinfinity,galeati_gubinelli_22_noiseless}. In this section we will give a short introduction to the methodology and ideas behind such regularisation of ordinary differential equations, which will in subsequent sections be applied to McKean--Vlasov problems. 

The first step is to reformulate \eqref{eq:SimpleODE} by defining $X_t:=\tilde{X}_t-Z_t$, which we ask to solve,
\begin{equation}\label{eq:ModifiedODE}
	X_t^{\xi}=\xi+\int_0^t f(X_r^{\xi}+Z_r)\dd r,
\end{equation}
where again the drift is only to be understood formally for now. We then define a new distribution, for any $0\leq s<t\leq T$ and $x\in \bR^d$, setting
\begin{equation}\label{eq:GammaDef}
	\langle\Gamma_{t}f,\varphi(\,\cdot\,-x)\rangle := \int_0^t \langle f,\varphi(\,\cdot\,-x-Z_r)\rangle\,\dd r,\quad.
\end{equation}
for all $\varphi \in \cS(\bR^d)$. We refer to $\Gamma_{t}f$ as an averaged distribution and denote the time increment by $\Gamma_{s,t}f=\Gamma_{t}f-\Gamma_{s}f$.
\begin{definition}[Averaged distributions]\label{def: avg operator}
	For $\beta\in \bR$, $p,q\in [1,\infty]$, let $f\in \cB_{p,q}^\beta(\bR^d)$. We say that $\Gamma f:[0,T]\times \bR^d \rightarrow \bR^d $ defined by \eqref{eq:GammaDef}, is an {\it averaged distribution} if $ t \mapsto \Gamma_{t} f\in \cC^\gamma_{T} \cC^\alpha(\bR^d)$ for some $\gamma>1/2$ and $\alpha\geq \beta$. If $\Gamma f \in C_T^\gamma\cC^{\alpha}(\bR^d)$ for all $f\in \cB_{p,q}^\beta(\bR^d)$ then by an abuse of notation we also define the averaging operator $\Gamma :\cB^\beta_{p,q}(\bR^d)\rightarrow \cC^\gamma_T\cC^\alpha(\bR^d)$, where the evaluation is given by \eqref{eq:GammaDef}.
\end{definition}
\begin{remark}
	Note that $\Gamma$ in general depends on a given path $Z:[0,T]\rightarrow \bR^d$, however in this article we are not concerned with the properties of $\Gamma$ w.r.t $Z$, we only assume we can build a sufficiently regular averaged distribution from some set of paths $Z$. Therefore we only write $\Gamma$, and say that $\Gamma$ is associated to the path $Z$ when necessary.
\end{remark}
\begin{definition}\label{def: regularising path}
	Let $\beta\in \bR$, $p,q\in [1,\infty]$ and $\rho>0$. A path $Z:[0,T]\rightarrow \bR^d$ is called {\it $\rho$-regularising on $\cB^\beta_{p,q}(\bR^d)$} if there exists a $\gamma>1/2$ such that the averaging operator $\Gamma$ associated to $Z$ satisfies  $\Gamma f \in \cC_T^\gamma \cC^{\beta+\rho}(\bR^d)$ for every $f\in \cB^\beta_{p,q}(\bR^d)$. If the path $Z$ is such that for any $f\in \cB_{p,q}^\beta(\bR^d)$ with $\beta\in \bR$ and $p,q\in [1,\infty]$, the averaged field $\Gamma f\in \cC^\gamma_T\cC^\alpha(\bR^d)$, for any $\alpha\in \bR$, we say that $Z$ is {\em infinitely regularising}. 
\end{definition}
\begin{remark}\label{rem:interpolation}
Any continuous path $(Z_t)_{t\in [0,T]}$ is $0$-regularising, in the sense that for any function $f\in \cC^\beta$ with $\beta\in \bR$, it follows that $\Gamma f\in \cC^1_T\cC^\beta(\bR^d)$. With this knowledge, interpolation reveals that time regularity of $\Gamma f$, when $\Gamma$ is associated to a $\rho$-regularising path, can be traded for spatial regularity. To see this, since $\Gamma f\in \cC^{1/2}_T\cC^{\beta+\rho}(\bR^d)\cap \cC^1_T\cC^\beta(\bR^d)$, it follows by interpolation in Besov spaces (see e.g. \cite[Thm. 2.80]{bahouri_chemin_danchin_11}) that for any $\theta\in [0,1]$
	\begin{equation*}
		\|\Gamma_{s,t}f\|_{\cC^{\beta+\theta \rho}}\leq \|\Gamma_{s,t}f\|_{\cC^{\beta+\rho}}^\theta \|\Gamma_{s,t}f\|_{\cC^{\beta}}^{1-\theta}.
	\end{equation*}
	Thus, for any $\gamma\in [\frac{1}{2},1]$ it follows that $\Gamma f\in \cC^\gamma_T \cC^{\beta+2\rho(1-\gamma)}$.
\end{remark}
From now on, we assume that given $f$ we are able to find $Z$ sufficiently regularising that $\Gamma_{s,t} f $ is a genuine function and so we may drop the test function in \eqref{eq:GammaDef}. In this case we refer to $\Gamma f$ as an \textit{averaged field}. This assumption will be justified below.

In the case that $Z$ is a random path, the averaged field can be written as the integral of $f(x+z)$ against the occupation measure, $m_t$, of the path $t\mapsto Z_t$,
\begin{equation}\label{eq:local time formula}
	\Gamma_{0,t} f(x)= \int_0^tf(x+Z_r)\dd r=\int_{\bR^d}f(x+z) \dd m_{t}(z).
\end{equation}
Assuming the occupation measure $m_t$ has a density, $L_t \in L^1(\bR^d)$, we can re-write \eqref{eq:local time formula} as a convolution,
\begin{equation}\label{eq:avg function as local time conv}
	\Gamma_{0,t}f(x)=f\ast \bar{L}_t(x) \quad \text{ where }\quad \bar{L}_t(x):= L_t(-x). 
\end{equation}
Therefore, one approach to defining the averaged field is to first obtain regularity estimates on the occupation measure $m_t$ and then define $\Gamma f$ as in \eqref{eq:avg function as local time conv}. We outline a few known results on the regularity of the averaged fields $\Gamma f$ and the regularity of the local times, $L_t$, associated to certain Gaussian processes. For a deeper discussion on occupation measures and local times, see the survey paper \cite{geman_horowitz_80}.
\begin{example}\label{example:CatGubinelli}
	Let $Z$ be a fractional Brownian motion, on $(\tilde{\Omega},\tilde{\cF},\tilde{\PP})$, with Hurst parameter $H\in (0,1)$ and $f\in \cC^\beta$. In \cite{catellier_gubinelli_16} it is shown that there exists a set $\tilde{\cN} \subset \tilde{\Omega}$ of full measure depending on the distribution $f$ and $Z$, and a $\gamma>1/2$, such that for all $\tilde{\omega}\in \tilde{\cN}$, the averaged field $\Gamma f\in \cC^\gamma_T\cC^{\beta+\rho}_{\text{loc}}(\bR^d)(\bR^d)$ for any $\rho<1/(2H)$. Note that this result does not define an average operator in sense of Definition \ref{def: avg operator}, as the set of full measure, $\tilde{\cN}$, depends explicitly on $f$, and thus the regularising effect does not necessarily hold for all $f\in \cC^\beta(\bR^d)$ simultaneously.
\end{example}
In two recent publications \cite{galeati_gubinelli_22_noiseless,galeati_gubinelli_20_Prevalence}, Galeati and Gubinelli prove that infinitely regularising paths are prevalent in $C_T$. The concept of prevalence was earlier used by Hunt, \cite{hunt_94}, to prove that almost all continuous paths are nowhere differentiable. In \cite{galeati_gubinelli_22_noiseless} it is also shown that for any $\delta>0 $, the $\frac{1}{2\delta}$-regularising paths are prevalent in $\cC^{\delta-\varepsilon}_T$ for any $\varepsilon>0$, \cite[Thm. 1]{galeati_gubinelli_22_noiseless}. This result makes rigorous the heuristic that more irregular paths $Z$ lead to more regularising averaging operators $\Gamma$.

In the next proposition we give a concrete example of a criterion that guarantees the regularising effect of a given process. This condition has been applied to obtain regularisation results in \cite[Thm. 17]{harang_20_cinfinity} and \cite{galeati_gubinelli_20_Prevalence}.
\begin{proposition}\label{prop: reg of local time}
	Let $(Z_t)_{t\in [0,T]}$ be a continuous Gaussian process, such that for some $\zeta\in (0,\frac{1}{d})$ 
	\begin{equation*}
		\inf_{t\in [0,T]}\inf_{s\in [0,t]}\inf_{z\in \bR^d;\, |z|=1} \frac{z^T{\rm Var}(Z_t|\cF_s)z}{|t-s|^{2\zeta}}>0.
	\end{equation*}
	Then there exists a $\gamma>1/2$ such that the associated local time $L:[0,T]\times \bR^d \rightarrow \bR_+$ is contained in $\cC^\gamma_T \cH^\rho(\bR^d)$ for any $\rho<\frac{1}{2\zeta}-\frac{d}{2}$, $\PP$-a.s.. 
\end{proposition}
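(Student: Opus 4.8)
The plan is to establish regularity of the local time $L$ of a Gaussian process via the Fourier-analytic representation of the occupation measure, the local non-determinism condition, and a Kolmogorov-type continuity argument. This is a standard route in the occupation-measure literature (cf. the survey of Geman--Horowitz and the references in the excerpt), adapted to produce the joint space-time H\"older$\times$Sobolev bound.

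\textbf{Step 1: Fourier representation of the occupation measure.} For $0\le s < t \le T$, write the increment of the occupation measure tested against the Fourier kernel: for $\xi \in \bR^d$, the Fourier transform in the space variable of $\mu_{s,t}$ is $\widehat{\mu_{s,t}}(\xi) = \int_s^t e^{i\xi\cdot Z_r}\,\dd r$. By the definition of the $\cH^\rho = \cB^\rho_{2,2}$ norm (see Appendix \ref{app:BesovSpaces}), controlling $\|L_{s,t}\|_{\cH^\rho}$ amounts to controlling $\int_{\bR^d}(1+|\xi|^2)^\rho\, \EE|\widehat{\mu_{s,t}}(\xi)|^2\,\dd\xi$ in expectation (and then, for the a.s.\ statement, a moment bound of sufficiently high order). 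Expanding the square gives
\begin{equation*}
	\EE\,|\widehat{\mu_{s,t}}(\xi)|^2 = \int_s^t\int_s^t \EE\left[ e^{i\xi\cdot(Z_r - Z_{r'})}\right]\dd r\,\dd r'.
\end{equation*}

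\textbf{Step 2: Use local non-determinism to bound the characteristic function.} Since $Z$ is Gaussian, $\EE[e^{i\xi\cdot(Z_r-Z_{r'})}] = \exp\big(-\tfrac12\,\xi^T\mathrm{Var}(Z_r - Z_{r'})\,\xi\big)$, and more usefully, conditioning on $\cF_{r'}$ (for $r'<r$) and using the tower property together with the hypothesis $\inf z^T\mathrm{Var}(Z_r|\cF_{r'})z \gtrsim |r-r'|^{2\zeta}|z|^2$ yields $|\EE[e^{i\xi\cdot(Z_r-Z_{r'})}]| \le \exp(-c|r-r'|^{2\zeta}|\xi|^2)$ for some $c>0$. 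Substituting into the double integral, using symmetry to reduce to $r'<r$, and performing the change of variables to integrate out one time variable gives a bound of the form
\begin{equation*}
	\EE\,|\widehat{\mu_{s,t}}(\xi)|^2 \lesssim \int_s^t \left(\int_0^{r-s} e^{-c\,u^{2\zeta}|\xi|^2}\,\dd u\right)\dd r \lesssim |t-s|\cdot \min\big(|t-s|,\,|\xi|^{-1/\zeta}\big),
\end{equation*}
where the last step rescales $u \mapsto |\xi|^{-1/\zeta}v$.

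\textbf{Step 3: Integrate in $\xi$ and optimise in the two regimes.} Insert this into the $\cH^\rho$ integral and split $\{|\xi|\le |t-s|^{-\zeta}\}$ versus its complement. On the low-frequency piece one uses the bound $|t-s|^2$; on the high-frequency piece $|t-s|\,|\xi|^{-1/\zeta}$. A direct computation shows the $\xi$-integral converges precisely when $\rho < \frac{1}{2\zeta} - \frac{d}{2}$ (the $d/2$ coming from the volume element $\dd\xi$ in dimension $d$, which is also why $\zeta<1/d$ is needed for the range to be nonempty), and one obtains $\EE\,\|L_{s,t}\|_{\cH^\rho}^2 \lesssim |t-s|^{2\gamma}$ for some $\gamma$ with $\gamma > \frac12$ as soon as $\rho$ is taken a little below the threshold; concretely $2\gamma = 1 + 2\zeta\big(\tfrac{1}{2\zeta}-\tfrac{d}{2}-\rho\big) \wedge 1$, which exceeds $1$.

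\textbf{Step 4: Upgrade to an almost sure H\"older bound.} Since $\widehat{\mu_{s,t}}(\xi)$ is (for fixed $\xi$) a smooth functional of a Gaussian process, all its moments are comparable to the second moment up to constants (Gaussian hypercontractivity / Wick calculus applied to $\int_s^t e^{i\xi\cdot Z_r}\dd r$), so the estimate of Step 3 bootstraps to $\EE\,\|L_{s,t}\|_{\cH^\rho}^{2n} \lesssim_n |t-s|^{2n\gamma}$ for every $n$. Choosing $n$ large and invoking the Kolmogorov--Chentsov continuity criterion in the Banach space $\cH^\rho$ gives a modification of $t\mapsto L_t$ lying in $\cC^{\gamma'}_T\cH^\rho$ for every $\gamma' < \gamma$, $\PP$-a.s.; since we have slack, we may keep $\gamma'>\tfrac12$. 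This is the claimed conclusion.

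\textbf{Main obstacle.} The delicate point is Step 2--3: making the conditional-variance hypothesis yield the clean exponential decay $|\EE[e^{i\xi\cdot(Z_r-Z_{r'})}]|\le e^{-c|r-r'|^{2\zeta}|\xi|^2}$ uniformly, and then tracking the exponents through the two-regime $\xi$-integration so that the threshold $\rho<\frac{1}{2\zeta}-\frac d2$ emerges exactly and the resulting time exponent is strictly above $1/2$. One must be careful that the local non-determinism is assumed with the \emph{past} $\sigma$-algebra $\cF_s$, so the conditioning in the characteristic-function estimate has to be set up so that the smaller time index is the conditioning one; a Cameron--Martin / Gaussian-conditioning lemma handles this but needs to be stated cleanly. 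Everything else — the Fourier side-definition of $\cH^\rho$, Gaussian hypercontractivity, Kolmogorov in a Banach space — is routine.
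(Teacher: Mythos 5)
The paper itself does not reprove this proposition: its ``proof'' is a citation to \cite[Thm.~17]{harang_20_cinfinity}, and your argument is essentially a reconstruction of the proof given there (Fourier representation of the occupation measure, the conditional-variance/local-non-determinism bound on the characteristic function, two-regime integration in $\xi$, and a Kolmogorov-type upgrade). Steps 1--3 are correct, including the conditioning set-up with the smaller time as the conditioning index and the exponent bookkeeping: one gets $\EE\|L_{s,t}\|_{\cH^\rho}^2\lesssim |t-s|^{2-\zeta(2\rho+d)}$, and $2-\zeta(2\rho+d)>1$ holds precisely when $\rho<\frac{1}{2\zeta}-\frac d2$, which is exactly the claimed threshold and guarantees $\gamma>\frac12$.

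The one genuine gap is the justification of Step 4. Gaussian hypercontractivity (equivalence of moments) applies to random variables in a fixed Wiener chaos, but $\int_s^t e^{i\xi\cdot Z_r}\,\dd r$ is a bounded nonlinear functional of $Z$, not an element of a fixed chaos, so ``all moments are comparable to the second moment'' is not justified as stated; and the trivial bound $|\widehat{\mu}_{s,t}(\xi)|\le|t-s|$ destroys the $|\xi|^{-1/\zeta}$ decay and is not enough to make Kolmogorov work with $\gamma'>\frac12$. The standard repair (and the one used in the cited reference) is to expand $\EE|\widehat{\mu}_{s,t}(\xi)|^{2n}$ as a $2n$-fold time integral, order the times, and apply the conditional-variance hypothesis iteratively by conditioning on the largest remaining time, which yields $\big(\EE|\widehat{\mu}_{s,t}(\xi)|^{2n}\big)^{1/n}\lesssim_n |t-s|\min\big(|t-s|,|\xi|^{-1/\zeta}\big)$; one then needs Minkowski's integral inequality to exchange the $\xi$-integral defining the $\cH^\rho$ norm with the $L^{2n}(\Omega)$ norm, giving $\EE\|L_{s,t}\|_{\cH^\rho}^{2n}\lesssim_n|t-s|^{2n\gamma}$, after which Kolmogorov--Chentsov in the Banach space $\cH^\rho$ applies as you say. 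This is the same conditioning device as your Step 2, so the gap is repairable without changing the architecture of your argument, but as written the hypercontractivity claim is the step that would not survive scrutiny, and it, rather than Steps 2--3, is where the real work of the higher-moment estimate lies.
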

\begin{proof}
	See \cite[Thm. 17]{harang_20_cinfinity}.
\end{proof}
If $(Z_t)_{t\in [0,T]}$ is assumed to be a fractional Brownian motion, it is shown in \cite{harang_20_cinfinity,galeati_gubinelli_20_Prevalence} that the associated local time is $\rho$-regular in space for any $\rho \in (0,\frac{1}{2H}-\frac{d}{2})$. We summarize this in the following proposition. 
\begin{proposition}\label{prop: reg of avg op fBm}
	Let $Z:[0,T]\times \Omega \rightarrow \bR^d$ be a fractional Brownian motion with Hurst parameter $H\in (0,1)$. For a vector field $f\in \cH^\beta(\bR^d)$ with $\beta\in \bR$, let $\Gamma f:[0,T]\times \bR^d \rightarrow \bR^d$ be defined as in \eqref{eq:GammaDef}. Then there exists a set of full measure, $\tilde{\cN}\subset \tilde{\Omega}$, depending only on $Z$, such that for all $\tilde{\omega}\in \tilde{\cN}$ and any $\gamma\in (\frac{1}{2},1)$,
	$$\Gamma(\tilde{\omega}) f\in \cC^\gamma_T \cC^{\beta+\frac{1-\gamma}{H}-d(1-\gamma)}(\bR^d).$$
	Moreover, the mapping $f\mapsto \Gamma f$ defines an average operator on $\cH^\beta(\bR^d)$.
\end{proposition}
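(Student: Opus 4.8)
The plan is to derive the statement from the known regularity of the occupation density (local time) of fractional Brownian motion, Proposition \ref{prop: reg of local time}, by transporting that regularity to $\Gamma f$ through the convolution formula \eqref{eq:avg function as local time conv} and then running the interpolation argument of Remark \ref{rem:interpolation}; working in the Sobolev scale $\cH^\beta$, rather than the H\"older scale of Example \ref{example:CatGubinelli}, is what makes the exceptional set independent of $f$. The first step is to check that an $\bR^d$-valued fBm $Z$ with Hurst parameter $H$ satisfies the hypothesis of Proposition \ref{prop: reg of local time} with $\zeta=H$. This is the strong local non-determinism of fBm: the coordinates of $Z$ being independent scalar fBm's, $\mathrm{Var}(Z_t\mid\cF_s)=c_H\,g(s,t)\,I_d$ with $g(s,t)\gtrsim|t-s|^{2H}$ uniformly for $0\le s\le t\le T$, whence $z^{T}\mathrm{Var}(Z_t\mid\cF_s)z\gtrsim|t-s|^{2H}|z|^2$. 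This requires $H<1/d$, as dictated by the constraint $\zeta\in(0,1/d)$; in the complementary range $H\ge 1/d$ the stated exponent $\beta+\tfrac{1-\gamma}{H}-d(1-\gamma)$ is $\le\beta$ and the conclusion is immediate from the elementary bound $\Gamma f\in\cC^1_T\cH^\beta$ (see below) together with the embeddings $\cC^1_T\cH^\beta\hookrightarrow\cC^\gamma_T\cH^\beta\hookrightarrow\cC^\gamma_T\cH^{\beta'}$ for $\beta'\le\beta$.

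Granting $H<1/d$, Proposition \ref{prop: reg of local time} --- more precisely the estimate underlying it, which yields $L\in\cC^{\gamma}_T\cH^{\rho}$ for every pair with $H(\rho+\tfrac d2)<1-\gamma$ --- produces a single null set $\tilde\cN\subset\tilde\Omega$ depending only on $Z$, off which the occupation density $L=L(\tilde\omega)$ enjoys this regularity; in particular $L\in\cC^{\gamma_0}_T\cH^{\rho}$ with $\gamma_0$ as close to $\tfrac12$ as we wish and $\rho$ correspondingly close to $\tfrac1{2H}-\tfrac d2$. For $\tilde\omega\in\tilde\cN$ and every $f\in\cH^\beta$ simultaneously one has $\Gamma_{s,t}f=f\ast\bar L_{s,t}$ with $\bar L_{s,t}(x)=L_{s,t}(-x)$. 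A Young-type convolution estimate in the H\"older--Besov scale --- using that on $[0,T]$ the density $L_t$ is supported in a random ball, so that one can pass between Besov integrability indices --- then gives
\begin{equation*}
\|\Gamma_{s,t}f\|_{\cH^{\beta+\rho}}\ \lesssim\ \|L_{s,t}\|_{\cH^{\rho}}\,\|f\|_{\cH^\beta}\ \lesssim_{\tilde\omega}\ |t-s|^{\gamma_0}\,\|f\|_{\cH^\beta},
\end{equation*}
so that $\Gamma(\tilde\omega):\cH^\beta\to\cC^{\gamma_0}_T\cH^{\beta+\rho}$ is bounded and linear; equivalently, $Z(\tilde\omega)$ is $\rho$-regularising on $\cH^\beta$ for every $\rho<\tfrac1{2H}-\tfrac d2$. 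Separately, translation invariance of $\|\cdot\|_{\cH^\beta}$ gives $\|\Gamma_{s,t}f\|_{\cH^\beta}\le|t-s|\,\|f\|_{\cH^\beta}$, hence $\Gamma f\in\cC^1_T\cH^\beta$.

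With the two endpoints $\Gamma f\in\cC^{\gamma_0}_T\cH^{\beta+\rho}\cap\cC^1_T\cH^\beta$ in hand I would invoke Remark \ref{rem:interpolation}, whose interpolation inequality holds verbatim in the Sobolev scale: for $\gamma\in(\gamma_0,1)$ one obtains $\Gamma f\in\cC^{\gamma}_T\cH^{\beta+\frac{1-\gamma}{1-\gamma_0}\rho}$, and letting $\gamma_0\downarrow\tfrac12$ and $\rho\uparrow\tfrac1{2H}-\tfrac d2$ this becomes $\Gamma f\in\cC^{\gamma}_T\cH^{\beta+2(1-\gamma)(\frac1{2H}-\frac d2)}=\cC^{\gamma}_T\cH^{\beta+\frac{1-\gamma}{H}-d(1-\gamma)}$ for every $\gamma\in(\tfrac12,1)$; the corresponding statement in the $\cC^\alpha$ scale of Definition \ref{def: avg operator} then follows by Besov embedding. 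Since every step takes place off the single set $\tilde\cN$, which depends only on $Z$, these bounds hold for all $f\in\cH^\beta$ at once, which is precisely the assertion that $f\mapsto\Gamma f$ defines a bounded averaging operator for each $\tilde\omega\in\tilde\cN$.

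I expect the real work to be in the second step: making the convolution estimate quantitative in the H\"older--Besov scale with exactly the right loss, so that the local-time exponent $\rho$ is preserved up to the dimensional shift already present in Proposition \ref{prop: reg of local time}, and, relatedly, extracting from the proof of Proposition \ref{prop: reg of local time} the full family of admissible pairs $(\gamma,\rho)$ with $\gamma_0$ approaching $\tfrac12$; both are needed for the interpolation to deliver the whole range $\gamma\in(\tfrac12,1)$ at the stated exponent. The local-non-determinism verification and the interpolation step itself are routine.
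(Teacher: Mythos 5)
Your overall strategy is the same as the paper's: obtain $L\in\cC^{\gamma_0}_T\cH^\rho$, $\rho<\tfrac1{2H}-\tfrac d2$, from Proposition \ref{prop: reg of local time}, transfer this to $\Gamma f=f\ast\bar L$ by a convolution inequality, and then interpolate as in Remark \ref{rem:interpolation}; your verification of the local non-determinism hypothesis for fBm is a reasonable addition that the paper leaves implicit. However, there is a genuine gap in where you place the convolution estimate. You convolve inside the Sobolev scale, obtaining $\Gamma f\in\cC^{\gamma_0}_T\cH^{\beta+\rho}$ (via the compact-support trick), interpolate in $\cH$, and only at the very end pass to the $\cC^\alpha$ scale ``by Besov embedding''. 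That last step is not free: the embedding $\cH^{s}\hookrightarrow\cC^{s-\frac d2}$ costs $\tfrac d2$ derivatives, so your chain delivers $\Gamma f\in\cC^\gamma_T\cC^{\beta+\frac{1-\gamma}{H}-d(1-\gamma)-\frac d2}$, which is strictly weaker than the stated exponent $\beta+\frac{1-\gamma}{H}-d(1-\gamma)$. The paper avoids this loss entirely: it applies Young's convolution inequality for Besov spaces, \eqref{eq:PwiseBesovYoung}, with $p=q=2$ and hence $r=\infty$, i.e.
\begin{equation*}
  \|f\ast L_{s,t}\|_{\cC^{\beta+\rho}}\ \leq\ \|f\|_{\cH^{\beta}}\,\|L_{s,t}\|_{\cH^{\rho}},
\end{equation*}
so the averaged field lands directly in the H\"older--Besov scale with the full exponent $\beta+\rho$, and the interpolation of Remark \ref{rem:interpolation} is then carried out in that scale, with no final embedding and no $\tfrac d2$ loss. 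Your compact-support manipulation is thus not only unnecessary but is what forces the lossy embedding at the end; replacing it by the $r=\infty$ form of Young's inequality repairs the argument and recovers the paper's proof.

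The same $\tfrac d2$ deficiency affects your dispatch of the range $H\ge 1/d$: there you conclude only $\Gamma f\in\cC^\gamma_T\cH^{\beta'}$ for $\beta'\le\beta$, but the proposition asserts membership of $\cC^\gamma_T\cC^{\beta'}$, and since $(1-\gamma)\bigl(d-\tfrac1H\bigr)<\tfrac d2$ the target exponent $\beta+(1-\gamma)\bigl(\tfrac1H-d\bigr)$ lies strictly above $\beta-\tfrac d2$, so the embedding $\cH^\beta\hookrightarrow\cC^{\beta-\frac d2}$ does not reach it; this case is not covered by the trivial bound alone.
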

\begin{proof}
	A full proof is given in the proof of \cite[Thm. 17 and Rem. 18]{harang_20_cinfinity}, however, we give a quick sketch using the local time approach. 
	Using Proposition \ref{prop: reg of local time} we see that the local time $L$ associated to $(Z_t)_{t\in [0,T]}$ is contained in $\cC^\gamma_T\cH^\rho(\bR^d)$, $\tilde{\PP}$-a.s. for some $\rho \in (0,\frac{1}{2H}-\frac{d}{2})$ and $\gamma\in (\frac{1}{2},1)$, then an application of Young's convolution inequality for Besov spaces, \eqref{eq:PwiseBesovYoung}, gives
	\begin{equation}
		\|f\ast L_{s,t}\|_{\cC^{\beta+\alpha}}\leq \|f\|_{\cH^\beta}\|L\|_{\cC^\gamma_T \cH^\alpha}|t-s|^\gamma 
	\end{equation}
	Thus since $\Gamma f=f\ast \bar{L}$ where $\bar{L}_t(x)=L_t(-x)$, as seen in \eqref{eq:avg function as local time conv}, and using the fact that the Sobolev regularity of $\bar{L}$ is identical to that of $L$, it follows that the path $Z$ is $\rho$-regularising according to Definition \ref{def: regularising path}.
	An application of the interpolation shown in Remark \ref{rem:interpolation} completes the proof. 
\end{proof}
Note that in contrast to Example \ref{example:CatGubinelli} the full measure set, $\tilde{\cN}$, here does not depend on $f$. However, the regularity gain is lower, at almost $\frac{1}{2H}-\frac{d}{2}$, as opposed to almost $\frac{1}{2H}$.

As the concept and regularity of averaging operators as given in Definition~\ref{def: avg operator} is by now well established, and the examples of explicit paths which provide a regularising effect is vast, for the rest of this text we do not deal with particular paths but rather assume that the average operator $\Gamma$ can be built from a suitable path. In Section~\ref{sec:Applications} we provide some concrete examples with $Z$ a fractional Brownian motion to highlight the degree of roughness one might expect to require in certain cases of classical interest.

Once it is established that $\Gamma$ is an operator from $\cB_{p,q}^\beta(\bR^d) \rightarrow \cC^\gamma_T\cC^{\beta+\rho}(\bR^d)$ for some $\rho>-\beta$ and $\gamma>1/2$, we return to the ODE \eqref{eq:SimpleODE}. The idea now is to use the spatial regularity of $\Gamma f$ to ensure well-posedness of the reformed equation \eqref{eq:ModifiedODE}. To do so we employ the method of non-linear Young integrals, introduced in \cite{catellier_gubinelli_16} and also employed in \cite{harang_20_cinfinity, galeati_gubinelli_22_noiseless, galeati_harang_20}. A more general survey can be found in \cite{galeati_21_NLY}. Considering a path $Y\in \cC^{\gamma'}_T$ with $\gamma+\gamma'((\beta+\rho)\wedge 1)>1$, one defines 
\begin{equation}\label{eq:NLYIDefinition}
	\int_0^t \Gamma_{\dd r}f(Y_r):=\lim_{|\cD|\rightarrow 0} \sum_{[u,v]\in \cD} \Gamma_{u,v}f(Y_u), 
\end{equation}
where $\cD$ is any partition of the given time interval and $|\cD|$ is the maximal increment size in $\cD$. An application of the Sewing lemma, known from the theory of rough paths (cf. \cite[Lem. 4.2]{friz_hairer_14}), proves that this integral is well defined. Indeed, setting $\Xi_{s,t}=\Gamma_{s,t}f(Y_s)$ then we see that the abstract integral
\begin{equation*}
	\cI(\Xi)_t-\cI(\Xi)_s=\lim_{|\cD|\rightarrow 0} \sum_{[u,v]\in \cD} \Xi_{u,v}
\end{equation*}
is well defined, if for all $(s,t)\in \Delta_T^2$
$$
|\Xi_{s,t}|\lesssim |t-s|^{\delta_1}, \qquad {\rm and} \qquad |\delta_u\Xi_{s,t}|\lesssim |t-s|^{\delta_2}
$$
where $\delta_1\in (0,1)$, $\delta_2>1$ and for $u\in [s,t]$, $\delta_u\Xi_{s,t}:=\Xi_{s,t}-\Xi_{s,u}-\Xi_{u,t}$.
It is readily checked in our case that 
\begin{equation*}
	\delta_u\Xi_{s,t}=\Gamma_{u,t}f(Y_s)-\Gamma_{u,t}f(Y_u). 
\end{equation*}
So invoking the assumption that $\Gamma f\in \cC^\gamma_T\cC^{\beta+\rho}(\bR^d)$ it holds that for any $x,y\in \bR^d$ and $(s,t)\in \Delta_2^T$,
\begin{equation*}
	|\Gamma_{s,t}f(x)-\Gamma_{s,t}f(y)| \lesssim |x-y|^{(\beta+\rho)\wedge 1}|t-s|^\gamma,
\end{equation*}
and thus
\begin{equation*}
	|\delta_u\Xi_{s,t}|\lesssim [Y]_{\gamma'}|t-u|^\gamma|u-s|^{((\beta+\rho) \wedge 1)\gamma'}\lesssim |t-s|^{((\beta+\rho) \wedge 1)\gamma'+\gamma}. 
\end{equation*}
Since $(\rho \wedge 1)\gamma'+\gamma>1$ by assumption, we conclude that the integral \eqref{eq:NLYIDefinition} is well defined. We will refer to this construction as the non-linear Young integral (NLYI) due to the structure of the integrand.\\ \par
In the coming sections we will use the concept of the averaging operator $\Gamma$ to give meaning to McKean--Vlasov equations. This leads us to consider non-linear Young integrals constructed to coincide with integrals of the form
\begin{equation}\label{eq: integral mu star}
	\int_0^t (K\ast\mu_r)(Y_r+Z_r) \dd r,
\end{equation}
where $\mu \in \cP(\cC^{\gamma'}_T)$, $Y \in \cC^{\gamma'}_T$ and $K \in \cB_{p,q}^\beta(\bR^d)$ for $\beta\in \bR$. The regularising path $Z:[0,T]\rightarrow \bR^d$ we will take to be deterministic and sufficiently regularising such that $\Gamma K$, as defined in \eqref{eq:GammaDef}, is contained in $\cC_T^{\gamma}\cC^\alpha$ for any $T>0$, some $\gamma>1/2$ and $\alpha\geq 2$. From Proposition \ref{prop: reg of avg op fBm} we see that this assumption is not vacuous. Indeed, we can always choose a sample path of a fractional Brownian motion (on a different probability space) with Hurst parameter $H\in (0,1)$ as small as we want (this can now be seen as a deterministic path), so that $\beta+\frac{1}{2H}-\frac{d}{2}>2$. We mention that, much like in the theory of rough paths, we require $1$-degree more regularity than the spatial Lipschitz property on $\Gamma K$ in order to obtain stability of solutions.

By analogy with \eqref{eq:NLYIDefinition} our first task will be to construct the non-linear Young integral
\begin{equation}\label{eq:MeasureDependentNYLI}
	\int_0^t \Gamma_{\dd r}K\ast \mu_r(Y_r) := \lim_{|\cD|\rightarrow 0} \sum_{[u,v]\in \cD} \Gamma_{u,v}K\ast \mu_u(Y_u).
\end{equation}
We again use the sewing lemma to show that the non-linear integral on the right hand side is well defined in a Young sense, however, for this purpose we require a notion of H\"older continuity for the measure valued flow $t\mapsto \mu_t$. This is discussed in Section \ref{sec:MeasureFlowsRegularity} below. We note that in the sequel we will view $K$ as fixed for a given interacting particle system, so for notational ease we will collapse $\Gamma_{\dd r}K$ to $\Gamma_{\dd r}$.
\section{Wasserstein Distances and H\"older Regularity of Measure Flows}\label{sec:MeasureFlowsRegularity}
As we saw in the construction of the non-linear Young integral, \eqref{eq:NLYIDefinition}, it was important that the path $Y:[0,T]\rightarrow \bR^d$ was sufficiently regular. Since we are concerned with defining non-linear Young integrals with measure valued integrands, as in \eqref{eq:MeasureDependentNYLI}, we will require a notion of time regularity for measure valued flows. In this section we recap some well known material concerning the notion of Wasserstein distances between probability measures and employ them to make rigorous a notion of H\"older continuity for measure valued flows. Similar ideas were applied in \cite{cass_lyons_14}.

If $(E,d_E)$ is a metric space for any $\mu \in \cM(E)$ and $p\geq 1$ we define the $p^{\text{th}}$-moment of $\mu\in \cM(E)$ by the expression 
\begin{equation*}
	\int_{E}d_E(\xi,x)^p\,\dd |\mu|(x), \quad \text{ for some }\xi \in E.
\end{equation*}
For $p \in [1,\infty)$ we let $\cM_p(E)$ denote the set of real valued Radon measures with finite $p^{\text{th}}$-moment and we denote the subspace of zero mass Radon measures by $\cM^0(E):=\left\{\mu \in \cM(E)\,:\, \mu(E)=0\right\}$ (resp. $\cM_p^0(E):=\left\{\mu \in \cM(E)\,:\, \mu(E)=0, \mu \text{ has finite } p^{\text{th}} \text{ moment }\right\}$). We write $\cP_p(E)$ for the probability measures with finite $p^{\text{th}}$ moment on $E$. For $(E,d_E),(F,d_F)$ a pair of metric spaces and $p,q \in [1,\infty]$ we write $\cP_{p,q}(E\times F)$ for the set of probability measures, $\mu$, on $E\times F$ whose first marginals, $\mu|_{E}$, lie in $\cP_p(E)$ and whose second marginals, $\mu|_F$, lie in $\cP_q(F)$.
\begin{definition}[Wasserstein Distances]\label{def:WassDist}
	Let $(E,d_E)$ be a Polish space, and $\mathcal{P}_p(E)$ be as above. Then we may equip $\cP_p(E)$ with the distance,
	\begin{equation}\label{eq:WassersteinPMetric}
		\mathcal{W}_{p;E}(\mu,\nu):= \left(\inf_{m \in \Pi(\mu,\nu)} \iint_{E\times E} d_E(x,y)^p \dd m(x,y)\right)^{\frac{1}{p}},
	\end{equation}
	where $\Pi(\mu,\nu)\subseteq \mathcal{P}_{p,p}(E\times E)$ is the set of measures on the product space with first marginal equal to $\mu$ and second marginal equal to $\nu$.
\end{definition}
Note that the above definition makes no assumption on an underlying abstract probability space(s) giving rise to the measures $\mu,\,\nu \in \cP_p(E)$.
\begin{remark}
	For $\mu,\,\nu\in \cP_p(E)$ the metric $\cW_{p;E}$ can be equivalently characterised in terms of $E$ valued random variables on a fixed probability space. Let $(\Omega,\cF,\PP)$ be a probability space and $X,\,Y$ be any measurable mappings $X,\,Y:\Omega\rightarrow E$ such that $\EE\left[|X|^p \right]<\infty$ and $\EE\left[|Y|^p\right]<\infty$, with $\mu=\cL(X)$ and $\nu=\cL(Y)$, then,
	\begin{equation*}
		\cW_{p;E}(\mu,\nu)= \inf_{X,\,Y} \EE\left[d(X,Y)^p \right]^{\frac{1}{p}},
	\end{equation*}
	where the infimum is taken over all $X,\,Y$ as above. 
\end{remark}
The Wasserstein metrics play an important role in the study of McKean--Vlasov equations.
\begin{proposition}\label{prop:PortmanteauTheorem}Let $(E,d_E)$ be a polish space and for any $p\geq 1$ let $\cW_{p;E}$ be the distance defined by \eqref{eq:WassersteinPMetric}. Then the following all hold:
\begin{enumerate}[label=(\roman*)]
\item \label{it:WassTrueMetric}The distance $\cW_{p;E}$ satisfies the properties of a metric on $\cP_p(E)$. Furthermore $(\cP_p,\cW_{p;E})$ is itself polish.
\item \label{it:WassMinimiser} For any pair $\mu,\,\nu \in\cP_p(E)$ there exists a measure $\bar{m} \in \Pi(\mu,\nu)$ such that,
\begin{equation*}
\cW_{p;E}(\mu,\nu)= \left(\iint_{E \times E} d_E(x,y)^p\,\dd \bar{m}(x,y)\right)^{\frac{1}{p}}.
\end{equation*}
\item \label{it:Portmanteau}Let $(\mu^n)_{n\in\bN}$ be a sequence in $\cP_p(E)$, then the following are equivalent:
\begin{enumerate}[label=(\alph*)]
\item There exists a $\mu \in \cP_p(E)$ such that $\lim_{n \rightarrow \infty}\cW_{p;E}(\mu^n,\mu)=0$
\item The sequence converges weakly to $\mu\in \cP_p(E)$ and there exists an $e_0\in E$ such that
\begin{equation*}
\lim_{k\rightarrow \infty} \int_{E\setminus B_k(\xi)}d(e_0,x)^p \,\dd\mu^n(x)=0, \text{ uniformly in $n \in \bN$.}
\end{equation*}
\end{enumerate}
\end{enumerate}
\end{proposition}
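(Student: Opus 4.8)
All three statements are classical facts about Wasserstein spaces (they appear, for instance, in the monographs of Villani and of Ambrosio--Gigli--Savar\'e); I would present them in the order \ref{it:WassMinimiser}, \ref{it:WassTrueMetric}, \ref{it:Portmanteau}. \emph{Existence of optimal couplings.} Fix $\mu,\nu\in\cP_p(E)$. Since $E$ is Polish, every Borel probability measure on $E$ is tight, so $\Pi(\mu,\nu)$ is a tight, hence (by Prokhorov) relatively weakly compact, family of measures on $E\times E$; it is also weakly closed, the marginal constraints surviving weak limits. The cost $m\mapsto\iint_{E\times E}d_E(x,y)^p\,\dd m$ is weakly lower semicontinuous, being the increasing supremum over $M>0$ of the bounded continuous functionals $m\mapsto\iint (d_E(x,y)^p\wedge M)\,\dd m$; a lower semicontinuous functional attains its infimum on a compact set, which produces the minimiser $\bar m$.

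\emph{Metric and Polish structure.} Symmetry and positivity are immediate, and finiteness of $\cW_{p;E}$ on $\cP_p(E)$ follows by testing with the product coupling together with $d_E(x,y)^p\le 2^{p-1}(d_E(x_0,x)^p+d_E(x_0,y)^p)$. If $\cW_{p;E}(\mu,\nu)=0$ then the optimal coupling from \ref{it:WassMinimiser} is concentrated on the diagonal, whence $\mu=\nu$. The triangle inequality is the classical gluing argument: disintegrate optimal couplings $m_{12}\in\Pi(\mu_1,\mu_2)$ and $m_{23}\in\Pi(\mu_2,\mu_3)$ along their common $\mu_2$-marginal, glue to a measure on $E^3$ carrying those pairwise marginals, and apply Minkowski's inequality in $L^p(E^3)$ to $(x_1,x_2,x_3)\mapsto d_E(x_1,x_2)$ and $d_E(x_2,x_3)$. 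Separability of $(\cP_p,\cW_{p;E})$ holds because finitely supported measures with rational weights on a fixed countable dense subset of $E$ are dense; for completeness, extract from a Cauchy sequence a subsequence with $\cW_{p;E}(\mu^{n_k},\mu^{n_{k+1}})\le 2^{-k}$, use \ref{it:WassMinimiser} and Kolmogorov's extension theorem to realise the $\mu^{n_k}$ as laws of random variables $X_k$ on a common probability space with $\sum_k\EE[d_E(X_k,X_{k+1})^p]^{1/p}<\infty$, so that $X_k\to X$ almost surely and in $L^p$, and $\cL(X)$ is the $\cW_{p;E}$-limit of the original Cauchy sequence.

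\emph{Portmanteau characterisation.} That (a) forces weak convergence follows from $\cW_{1;E}\le\cW_{p;E}$ (Jensen) and $|\int\varphi\,\dd\mu^n-\int\varphi\,\dd\mu|\le\Lip(\varphi)\,\cW_{1;E}(\mu^n,\mu)$ for bounded Lipschitz $\varphi$, together with the fact that bounded Lipschitz functions are convergence-determining on a Polish space; the moment condition in (b) follows because $\mu\mapsto\cW_{p;E}(\delta_{x_0},\mu)=\bigl(\int d_E(x_0,x)^p\,\dd\mu\bigr)^{1/p}$ is $1$-Lipschitz for $\cW_{p;E}$, so the $p$-th moments converge, and weak convergence together with convergence of $p$-th moments is equivalent to uniform integrability of $x\mapsto d_E(x_0,x)^p$ (via the Skorokhod representation theorem this reduces to the classical $L^1$ criterion). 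For the converse (b)$\Rightarrow$(a) I would fix $\varepsilon>0$, choose $R$ so large that $\int_{E\setminus B_R(x_0)}d_E(x_0,x)^p\,\dd\mu^n$ and the corresponding integral for $\mu$ are all $<\varepsilon$, compare each $\mu^n$ (resp.\ $\mu$) with its renormalised restriction to $B_R(x_0)$ --- the comparison costs being $O(\varepsilon^{1/p})$ uniformly in $n$ by the moment control --- and note that on the bounded set $\overline{B_R(x_0)}$ the cost $d_E^p$ is bounded and continuous, so weak convergence of the restrictions already forces $\cW_{p;E}$-convergence there; letting $n\to\infty$ and then $\varepsilon\to0$ closes the argument.

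\textbf{Main obstacle.} The only step requiring genuine care is (b)$\Rightarrow$(a): the non-compactness of $E$ and the unboundedness of $d_E^p$ must be controlled simultaneously, and the truncation to balls has to be arranged so that the renormalisation loses no mass in the limit. Relatedly, the completeness half of \ref{it:WassTrueMetric} rests on the Kolmogorov-extension coupling construction, which is the technical heart of that item.
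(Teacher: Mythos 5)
The paper does not actually prove this proposition: it records the statements and points the reader to Villani (Ch.~1) for \ref{it:WassTrueMetric} and to Ambrosio--Gigli--Savar\'e (Prop.~7.1.5) for \ref{it:Portmanteau}. Your proposal supplies precisely the standard arguments behind those citations, and in correct order of logical dependence: optimal couplings via tightness of $\Pi(\mu,\nu)$, Prokhorov, and lower semicontinuity of the truncated costs; the metric axioms via the optimal coupling and the gluing lemma; completeness via chaining optimal couplings of consecutive terms of a fast Cauchy subsequence onto one probability space and passing to an a.s.\ and $L^p$ limit; and the equivalence in \ref{it:Portmanteau} via the $1$-Lipschitz functional $\mu\mapsto\cW_{p;E}(\delta_{x_0},\mu)$ for one direction and the uniform truncation-to-balls comparison for the other. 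This is consistent with the cited proofs, so there is no methodological divergence to speak of; the only difference is that you prove what the paper merely quotes. Two small points deserve explicit care if you were to write this out in full: in (b)$\Rightarrow$(a) you should take the truncation radius $R$ with $\mu(\partial B_R(x_0))=0$ (possible for all but countably many $R$), so that the restricted and renormalised measures of $\mu^n$ do converge weakly to those of $\mu$ and no mass is lost in the renormalisation, which is exactly the obstacle you flag; and in the completeness argument the ``common probability space'' is obtained by iterated gluing of the optimal couplings of consecutive terms into a consistent family of finite-dimensional laws before invoking Kolmogorov (or Ionescu--Tulcea), after which Minkowski's inequality gives $\sum_k\EE[d_E(X_k,X_{k+1})^p]^{1/p}<\infty$ and hence a.s.\ and $L^p$ convergence by completeness of $E$. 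With these details filled in, your sketch is a complete and correct proof of all three items.
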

We refer the reader to \cite{ambrosio_gigli_savare_08,villani_03} for more details. Point \ref{it:WassTrueMetric} in particular is proved in \cite[Ch. 1]{villani_03} and Point \ref{it:Portmanteau} is proved as \cite[Prop. 7.1.5]{ambrosio_gigli_savare_08}.\\ \par 
The $\cW_{1;E}$ metric will play a central role in our analysis. By the Kantorovich--Rubinstein duality (Theorem \ref{th:KRDuality} below) we see that $\cW_{1;E}$ can be written as the restriction of a norm on the linear space $\cM_1(E)$ to $\cP_1(E)$. This allows us to define a notion of H\"older continuous measure flows $t\mapsto \mu_t \in \cP_1(\bR^d)$, see Definition \ref{def:MeasureFlowSemiNorm} below.\\ \par 
For a complete metric space $E$ and a map $\varphi:E\rightarrow \bR$ we define the Lipschitz constant of $\varphi$ by setting
\begin{equation*}
	[\varphi]_{\lip(E)}:= \sup_{x\neq y\,\in E}\frac{|\varphi(x)-\varphi(y)|}{d_E(x,y)},
\end{equation*}
and then we define the set
\begin{equation*}
	\lip_1(E):= \left\{\varphi:E\rightarrow \bR\,:\,[\varphi]_{\lip} \leq1 \right\}.
\end{equation*}
For $\mu \in \cM^0_1(E)$ we define its Lipschitz dual norm by the expression
\begin{equation}\label{eq:KRNorm}
	\|\mu\|_{\lip^*(E)} := \sup_{\varphi \in \lip_1(E)} \int_{E} \varphi \,\dd \mu.
\end{equation}
Given $\mu,\,\nu \in \cP_1(E)$, while $\mu-\nu \notin \cP(E)$ the difference is in $\cM^0_1(E)$ and so $\|\mu-\nu\|_{\lip^*(E)}$ is well defined. The Kantorovich--Rubinstein theorem states that this quantity is equal to the $1$-Wasserstein distance.
\begin{theorem}[Kantorovich--Rubinstein Duality]\label{th:KRDuality}
	Let $(E,d_E)$ be a Polish space and ${\rm lip}_1(E)$ be as defined above. Then for all $\mu,\,\nu \in \cP_1(E)$ we have the equality
	\begin{equation}\label{eq:KRDuality}
		\cW_{1;E}(\mu,\nu) = \|\mu-\nu\|_{{\rm lip}^*(E)}.
	\end{equation}
	Furthermore, it does not affect the norm on the right hand side if we further restrict the supremum to all $\varphi \in \lip_1(E)\cap C_b(E)$.
\end{theorem}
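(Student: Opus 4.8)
The plan is to prove the identity $\cW_{1;E}(\mu,\nu) = \|\mu-\nu\|_{\lip^*(E)}$ by establishing the two inequalities separately, since the easy direction follows from elementary duality and the hard direction requires a genuine compactness/approximation argument. First I would fix $\mu,\nu\in\cP_1(E)$ and write $\sigma = \mu-\nu \in \cM^0_1(E)$.

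\textbf{Step 1: The inequality $\|\mu-\nu\|_{\lip^*(E)}\le \cW_{1;E}(\mu,\nu)$.} Let $\varphi\in\lip_1(E)$ and let $m\in\Pi(\mu,\nu)$ be any coupling. Then
\begin{equation*}
  \int_E\varphi\,\dd\mu - \int_E\varphi\,\dd\nu = \iint_{E\times E}\bigl(\varphi(x)-\varphi(y)\bigr)\,\dd m(x,y) \le \iint_{E\times E} d_E(x,y)\,\dd m(x,y),
\end{equation*}
using $[\varphi]_{\lip}\le 1$. Taking the supremum over $\varphi\in\lip_1(E)$ on the left and the infimum over $m\in\Pi(\mu,\nu)$ on the right gives the bound. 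Note this direction needs no regularity of $E$ beyond being metric; it also immediately shows that the supremum in \eqref{eq:KRNorm} is finite, namely bounded by $\cW_{1;E}(\mu,\nu)<\infty$.

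\textbf{Step 2: The reverse inequality $\cW_{1;E}(\mu,\nu)\le \|\mu-\nu\|_{\lip^*(E)}$.} This is the substantive part and is where I expect the main obstacle to lie. The cleanest route is to invoke Kantorovich duality for the cost function $c(x,y)=d_E(x,y)$, which on a Polish space gives
\begin{equation*}
  \cW_{1;E}(\mu,\nu) = \sup\left\{ \int_E f\,\dd\mu + \int_E g\,\dd\nu \;:\; f,g\in C_b(E),\ f(x)+g(y)\le d_E(x,y)\right\},
\end{equation*}
(this is standard, see \cite{villani_03,ambrosio_gigli_savare_08}). One then checks that the optimal pair can be taken of the form $g=-f$ with $f$ $1$-Lipschitz: given an admissible $(f,g)$, replace $f$ by its $c$-transform $f^c(y)=\inf_x\bigl(d_E(x,y)-g(y)\bigr)$ type construction, i.e. pass to $f_1(x) := \inf_{y\in E}\bigl(d_E(x,y)-g(y)\bigr)$, which is $1$-Lipschitz, satisfies $f_1\ge f$, and still satisfies $f_1(x) + g(y)\le d_E(x,y)$; then set $g_1(y):=\inf_{x}\bigl(d_E(x,y)-f_1(x)\bigr)\ge g(y)$, again $1$-Lipschitz, and one verifies $g_1 = -f_1$ pointwise by the triangle inequality (the two inequalities $g_1\le -f_1$ and $g_1\ge -f_1$ follow from $d_E(x,y)\ge f_1(x)-f_1(y)$ applied in both orders). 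Hence the supremum reduces to $\sup_{\varphi\in\lip_1(E)}\int_E\varphi\,\dd(\mu-\nu)$, which is exactly $\|\mu-\nu\|_{\lip^*(E)}$. The hard part is really the Kantorovich duality itself; if one wants a self-contained argument one proves it via a minimax/Hahn--Banach argument, using tightness of $\cP_1(E)$ on a Polish space to get compactness of $\Pi(\mu,\nu)$ in the weak topology (so the infimum in \eqref{eq:WassersteinPMetric} is attained, which is Proposition \ref{prop:PortmanteauTheorem}\ref{it:WassMinimiser}) and then swapping $\inf_m\sup_{f,g}$.

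\textbf{Step 3: Restriction to bounded Lipschitz functions.} For the final assertion, observe that in Step 2 the functions $\varphi$ produced by the $c$-transform construction are automatically $1$-Lipschitz but not a priori bounded; however one may truncate. Fix $x_0\in E$ and replace $\varphi$ by $\varphi_R := \bigl(\varphi\wedge(\varphi(x_0)+R)\bigr)\vee(\varphi(x_0)-R)$, which lies in $\lip_1(E)\cap C_b(E)$; since $\mu,\nu\in\cP_1(E)$ have finite first moments and $|\varphi(x)-\varphi(x_0)|\le d_E(x,x_0)$, dominated convergence gives $\int_E\varphi_R\,\dd(\mu-\nu)\to\int_E\varphi\,\dd(\mu-\nu)$ as $R\to\infty$. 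Hence the supremum over $\lip_1(E)\cap C_b(E)$ already equals $\|\mu-\nu\|_{\lip^*(E)}$, completing the proof. The only delicate point throughout is the Polish assumption, which is used exactly once, to guarantee that Kantorovich duality holds with no duality gap; on a general metric space one would need to restrict attention to tight measures.
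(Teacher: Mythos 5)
Your argument is correct and is essentially the proof the paper points to: the paper simply cites \cite[Th.~1.14]{villani_03}, and your three steps (the easy coupling inequality, the reduction of Kantorovich duality for the cost $d_E$ to pairs $(\varphi,-\varphi)$ with $\varphi\in\lip_1(E)$ via the $c$-transform, and the truncation argument for the restriction to $\lip_1(E)\cap C_b(E)$) are exactly the standard route taken there. The only cosmetic point is that $g_1\le -f_1$ comes from choosing $x=y$ in the infimum (using $d_E(y,y)=0$) rather than from the Lipschitz bound, but this does not affect the argument.
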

\begin{proof}
	See the proof of \cite[Th. 1.14]{villani_03}.
\end{proof}
\begin{remark}
	Although Theorem \ref{th:KRDuality} is referred to as the Kantorovich--Rubesntein duality the dual quantity in our case is actually the Lipschitz dual norm. This discrepancy is resolved when $E$ is a compact metric space. We define the $\KR(E)$ norm on $\cM_1(E)$ by the expression
	\begin{equation*}
		\|\mu\|_{\KR(E)} = \sup_{\varphi \in \Lip_1(E)}\int_E \varphi \,\dd\mu,\qquad \Lip_1(E):= \left\{\varphi:E\rightarrow \bR\,:\,\|\varphi\|_{C}+[\varphi]_{\lip} \leq1 \right\}.
	\end{equation*}
	Then it is easily seen that when $E$ has finite diameter it is equivalent to restrict the supremum to $\varphi:E\rightarrow \bR$ such that $[\varphi]_{\lip}<1$ and $\varphi(x)=0$ for some $x \in E$. Then, since for any $\mu,\,\nu \in \cP_1(E)$ the difference $\mu-\nu$ is in $\cM^0_1(E)$ and so integrates constants to zero, one has
	\begin{equation*}
		\|\mu-\nu\|_{\lip*(E)} = \|\mu-\nu\|_{\KR(E)}.
	\end{equation*}
\end{remark}
\begin{remark}
	It is a classical result that unless $E$ is a finite space, any complete metric on $\cM(E)$ is equivalent to the total variation metric, which metrizes the topology of strong convergence. Therefore it is clear that neither $\left(\cM_1(E),\|\,\cdot\,\|_{\lip*(E)}\right)$ nor $\left(\cM_1(E),\|\,\cdot\,\|_{\KR(E)}\right)$ are complete metric spaces. However, combining Theorem \ref{th:KRDuality} and Point \ref{it:WassTrueMetric} of Proposition \ref{prop:PortmanteauTheorem} one sees that $(\cP_1(E),\|\,\cdot\,\|_{\lip*(E)})$ is complete.
\end{remark}
Since $(\cC^\beta_T(\bR^d),\|\,\cdot\,\|_{\beta})$ is a Banach space all the results of the previous section apply to the Wasserstein metrics $\cW_{p;\cC^\beta_T}$. In particular the space $(\cP_1(\cC^\beta_T),\cW_{1;\cC^\beta_T})$ is itself a Polish space when equipped with the metric,
\begin{equation*}
	\cW_{1;\cC^\beta_T}(\mu,\nu)=\|\mu-\nu\|_{\lip^*(\cC^\beta_T)}.
\end{equation*}
From now on, when $E$ is a Polish space, unless otherwise specified we always treat $\cP_p(E)$ as being equipped with the metric $\cW_{p;E}$. When the context is clear we will simply write $\|\,\cdot\,\|_{\lip^*}$, dropping the explicit dependence on $E$.
\subsection{H\"older Regularity of Measure Valued Flows}\label{subsec:HolderMeasureFlow} Let $f \in C_T$, then for every $t \in [0,T]$ we define the projection $\pi_t :C_T \rightarrow \bR^d$ to be the map such that $\pi_t f := f_t$. Then for $\mu \in \cM(C_T)$, we set $\mu_t := \pi_t\#\mu \in \cM(\bR^d)$.
Using the definition of the push-forward, we see that if $\mu \in \cP(\cC^\beta_T)$, then
\begin{equation}\label{eq:PMomentEmbedding}
	\int_{\bR^d} |x|^p\, \dd\mu_t(x)= \int_{\cC^\beta_T}|f_t|^p\, \dd\mu(f) \lesssim  \int_{\cC^\beta_T}\|f\|^p_{\beta;T}\,\dd\mu(f),\quad \forall\,t\in[0,T].
\end{equation}
So $\mu \in \cP_p(\cC^\beta_T) \Rightarrow \mu_t \in \cP_p(\bR^d)$ for every $t \in [0,T]$. In particular the $\lip^*(\bR^d)$ norm of $\mu_t$ is well defined for every $t\in [0,T]$.
We use this fact to define a notion of H\"older continuity for $\cP_1(\bR^d)$ valued measure flows.
\begin{definition}\label{def:MeasureFlowSemiNorm}
	Let $\beta \in (0,1)$, $0\leq s<t<\infty$ and $[s,t]\ni u\mapsto \mu_u \in \cM_1(\bR^d)$ be a flow of Radon measures. Then we say that $(\mu_u)_{u\in[s,t]}$ is $\beta$-H\"older continuous if 
	\begin{equation}\label{eq:MeasureFlowHolderSemiNorm}
	[\mu]_{\beta;[s,t]}:=\sup_{u \neq v\,\in [s,t]} \frac{\|\mu_v-\mu_u\|_{\lip^*(\bR^d)}}{|v-u|^\beta}<\infty.
	\end{equation}
\end{definition}
We write $\cC^\beta_{[s,t]}\cP_1(\bR^d):=\cC^\beta([s,t];\cP_{1}(\bR^d),\vertiii{\,\cdot\,;\,\cdot\,}_{\beta;[s,t]})$ for the space of $\cP_1(\bR^d)$ valued flows, equipped with the metric, 
\begin{equation*}
	\vertiii{\mu;\nu}_{\beta;[s,t]}:= \|\mu_0-\nu_0\|_{\lip^*(\bR^d)} + [\mu-\nu]_{\beta;[s,t]}.
\end{equation*}
We use the unusual notation $\vertiii{\,\cdot\,;\,\cdot\,}_{\beta;[s,t]}$ since the space of $\cP_1(\bR^d)$ valued flows is not linear. As with real valued H\"older continuous maps, we retain the convention that if $[s,t]=[0,T]$ for some $T>0$, we simply write $[\,\cdot\,]_{\beta;T},\,\vertiii{\,\cdot\,,\,\cdot\,}_{\beta;T}$ and $\cC^\eta\cP_1(\bR^d)$. For $\mu \in \cC^\beta([0,T];\cP_1(\bR^d))$ and $\beta' \in (0,\beta)$, one has $[\mu]_{\beta';[s,t]}\leq |t-s|^{\beta-\beta'}[\mu]_{\beta;[s,t]}$ for any $[s,t]\subseteq[0,T]$.
\begin{theorem}\label{th:HolderFlowEmbedding}
	The push-forward of the projection map $\pi_t$ gives a continuous embedding from $(\cP_1(\cC^\beta_T),\cW_{1;\cC^\beta_T})$ into $(\cC^\beta_T\cP_1(\bR^d),\vertiii{\,\cdot\,;\,\cdot\,}_{\beta;T})$ and for $\mu,\,\nu \in \cP_1(\cC^{\beta}_T)$,
	\begin{equation}\label{eq:HolderFlowEmbeddingNorm}
		\vertiii{\mu;\nu}_{\beta;T} \leq \cW_{1;\cC^\beta_T}(\mu,\nu).
	\end{equation}
\end{theorem}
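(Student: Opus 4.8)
The plan is to verify that for $\mu,\nu\in\cP_1(\cC^\beta_T)$ the pushed-forward flows $(\mu_t)$, $(\nu_t)$ are $\beta$-H\"older in the $\lip^*(\bR^d)$ sense and satisfy the claimed bound, from which continuity of the embedding is immediate. The key observation is the variational characterisation of $\|\cdot\|_{\lip^*(\bR^d)}$ together with the Kantorovich--Rubinstein identity $\cW_{1;\cC^\beta_T}(\mu,\nu)=\|\mu-\nu\|_{\lip^*(\cC^\beta_T)}$ from Theorem~\ref{th:KRDuality}. First I would fix $0\le s<t\le T$ and a test function $\varphi\in\lip_1(\bR^d)$, and consider the function $\psi:\cC^\beta_T\to\bR$ defined by $\psi(f):=\varphi(f_t)-\varphi(f_s)$. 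The point is that $\psi$ is Lipschitz on $\cC^\beta_T$ with constant controlled by $|t-s|^\beta$: indeed, for $f,g\in\cC^\beta_T$,
\begin{equation*}
  |\psi(f)-\psi(g)| \le |\varphi(f_t)-\varphi(g_t)| + |\varphi(f_s)-\varphi(g_s)| \le |f_t-g_t| + |f_s-g_s|,
\end{equation*}
but also, writing $h:=f-g$ and using $(f-g)_0$ to anchor,
\begin{equation*}
  |\psi(f)-\psi(g)| = |\varphi(f_t)-\varphi(f_s)-(\varphi(g_t)-\varphi(g_s))| \le |h_t - h_s| \le [h]_{\beta;T}\,|t-s|^\beta \le \|f-g\|_{\cC^\beta_T}\,|t-s|^\beta,
\end{equation*}
so in fact $\psi/|t-s|^\beta\in\lip_1(\cC^\beta_T)$ (note $\psi$ kills additive constants in $f$, so only the seminorm enters).

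Next, using the definition of the push-forward, $\int_{\bR^d}\varphi\,\dd\mu_t = \int_{\cC^\beta_T}\varphi(\pi_t f)\,\dd\mu(f)$, I would write
\begin{equation*}
  \int_{\bR^d}\varphi\,\dd(\mu_t-\mu_s) - \int_{\bR^d}\varphi\,\dd(\nu_t-\nu_s) = \int_{\cC^\beta_T}\psi\,\dd(\mu-\nu) = |t-s|^\beta \int_{\cC^\beta_T} \frac{\psi}{|t-s|^\beta}\,\dd(\mu-\nu).
\end{equation*}
Since $\psi/|t-s|^\beta\in\lip_1(\cC^\beta_T)$, the right-hand integral is at most $\|\mu-\nu\|_{\lip^*(\cC^\beta_T)}$ in absolute value. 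Taking the supremum over $\varphi\in\lip_1(\bR^d)$ yields $\|(\mu_t-\nu_t)-(\mu_s-\nu_s)\|_{\lip^*(\bR^d)} \le |t-s|^\beta\,\|\mu-\nu\|_{\lip^*(\cC^\beta_T)}$, i.e. $[\mu-\nu]_{\beta;T}\le \|\mu-\nu\|_{\lip^*(\cC^\beta_T)}$. An essentially identical but simpler argument with $\psi_0(f):=\varphi(f_0)$ (which has Lipschitz constant $\le 1$ on $\cC^\beta_T$ since $|f_0-g_0|\le\|f-g\|_{\cC^\beta_T}$) gives $\|\mu_0-\nu_0\|_{\lip^*(\bR^d)}\le\|\mu-\nu\|_{\lip^*(\cC^\beta_T)}$. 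Adding the two estimates and invoking Theorem~\ref{th:KRDuality} on both sides gives $\vertiii{\mu;\nu}_{\beta;T}\le\cW_{1;\cC^\beta_T}(\mu,\nu)$, which is \eqref{eq:HolderFlowEmbeddingNorm}; taking $\nu$ with all mass at a fixed path shows each $\mu_t$ indeed has finite first moment and $(\mu_t)\in\cC^\beta_T\cP_1(\bR^d)$, so the map is well-defined, and the bound shows it is $1$-Lipschitz, hence continuous.

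I do not expect a serious obstacle here; the only point requiring care is the bookkeeping around additive constants — one must use the \emph{seminorm} bound $|h_t-h_s|\le[h]_{\beta;T}|t-s|^\beta$ rather than a crude $\|f\|+\|g\|$ estimate, so that the Lipschitz constant of $\psi$ genuinely scales like $|t-s|^\beta$ and not like a constant. One should also note that the supremum defining $\|\cdot\|_{\lip^*(\cC^\beta_T)}$ may, by the last sentence of Theorem~\ref{th:KRDuality}, be restricted to bounded Lipschitz functions, which reassures that $\psi$ (unbounded in general) can be approximated if one is being fastidious, though since $\mu-\nu$ has finite first moment on $\cC^\beta_T$ the pairing $\int\psi\,\dd(\mu-\nu)$ is already well-defined and the bound by the dual norm is direct. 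Finally, well-definedness of $\mu_t\in\cP_p(\bR^d)$ for $\mu\in\cP_p(\cC^\beta_T)$ has already been recorded in \eqref{eq:PMomentEmbedding}, so nothing further is needed on that front.
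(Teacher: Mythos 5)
The decisive step in your argument is false. You claim that $\psi(f):=\varphi(f_t)-\varphi(f_s)$ satisfies $|\psi(f)-\psi(g)|\le |h_t-h_s|$ with $h:=f-g$, and hence that $\psi/|t-s|^\beta\in\lip_1(\cC^\beta_T)$. For a general $\varphi\in\lip_1(\bR^d)$ the second-order difference $\varphi(f_t)-\varphi(f_s)-\varphi(g_t)+\varphi(g_s)$ does not factor through $h_t-h_s$; that only happens when $\varphi$ is affine. Concretely, in $d=1$ take $\varphi(x)=\min(|x+1|,|x-2|)\in\lip_1(\bR)$ and paths with $f_s=-1$, $f_t=1$, $g=f+1$ (so $g_s=0$, $g_t=2$, $h\equiv-1$, $h_t-h_s=0$): then $\psi(f)-\psi(g)=\bigl(\varphi(1)-\varphi(-1)\bigr)-\bigl(\varphi(2)-\varphi(0)\bigr)=1-(-1)=2$, while $\|f-g\|_{\cC^\beta_T}=1$. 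So the Lipschitz constant of $\psi$ on $\cC^\beta_T$ stays of order one no matter how small $|t-s|$ is, the normalised function is not in $\lip_1(\cC^\beta_T)$, and the duality bound $\int\psi\,\dd(\mu-\nu)\le|t-s|^\beta\|\mu-\nu\|_{\lip^*(\cC^\beta_T)}$ on which your whole argument rests is unavailable. The only correct elementary bounds, $|\psi(f)-\psi(g)|\le|f_t-g_t|+|f_s-g_s|$ or $|\psi(f)-\psi(g)|\le|f_t-f_s|+|g_t-g_s|$, do not produce the factor $|t-s|^\beta$ jointly with a norm of $f-g$.

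Be aware that this is not a defect you can repair by following the paper's own proof more closely: there, the quantity $\int_{\cC^\beta_T}\bigl(\varphi(f_t)-\varphi(f_s)\bigr)\dd(\mu-\nu)(f)$ is bounded by $\int_{\cC^\beta_T}|f_t-f_s|\,\dd(\mu-\nu)(f)$, i.e.\ a pointwise inequality is integrated against the \emph{signed} measure $\mu-\nu$, which is the same difficulty in a different guise. Indeed the example above shows that the bound \eqref{eq:HolderFlowEmbeddingNorm}, with $[\mu-\nu]_{\beta;T}$ taken of the signed flow $t\mapsto\mu_t-\nu_t$, cannot hold in general: with $\mu=\delta_f$, $\nu=\delta_{f+1}$, $f_t=2t-1$ on $[0,1]$, one has $\cW_{1;\cC^\beta_T}(\mu,\nu)=\|f-g\|_{\cC^\beta_T}=1$, whereas $\|(\mu_1-\nu_1)-(\mu_0-\nu_0)\|_{\lip^*(\bR)}=\|\delta_1+\delta_0-\delta_2-\delta_{-1}\|_{\lip^*(\bR)}\ge 2$ (witnessed by the same $\varphi$), so $\vertiii{\mu;\nu}_{\beta;T}\ge 1+2=3$. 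What does survive from your manipulations are the positive-measure estimate $[\mu]_{\beta;T}\le\int_{\cC^\beta_T}[f]_{\beta;T}\,\dd\mu(f)$ (as in \eqref{eq:law to expectation}) and, via an optimal coupling, $\sup_{t\le T}\|\mu_t-\nu_t\|_{\lip^*(\bR^d)}\le(1\vee T^\beta)\,\cW_{1;\cC^\beta_T}(\mu,\nu)$; an embedding of the form \eqref{eq:HolderFlowEmbeddingNorm} would require either such a weaker left-hand quantity or a coupling-based definition of the H\"older distance between flows, not a patch of your Lipschitz claim.
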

\begin{proof}
	Let $\mu,\,\nu \in \cP_1(\cC^\beta_T)$ and we define the associated measure flows $(\mu_t)_{t \in [0,T]},\,(\nu_t)_{t\in[0,T]} \subset \cP_1(\bR^d)$ via the projection $\pi_t :\cC^\beta_T \rightarrow \bR^d$. From \eqref{eq:PMomentEmbedding} we see that for all $t\in[0,T]$, $\mu_t,\,\nu_t \in \cP_1(\bR^d)$. Then let $\varphi \in \lip_1(\bR^d)$ and using the push-forward, we have
	\begin{align*}
		\int_{\bR^d} \varphi(x) \,\dd (\mu_t(x)-\nu_t(x)-\mu_s(x)+\nu_s(x))&=\int_{\cC^\beta_T} (\varphi(f_t)-\varphi(f_s))\,\dd(\mu(f)-\nu(f))\\
		&\leq \int_{\cC^\beta_T} |f_t-f_s|\,\dd(\mu(f)-\nu(f)),
	\end{align*}
	where we used the fact that $\varphi \in \lip_1(\bR^d)$ in the last line. Dividing by $|t-s|^\beta$ and taking the supremum over $s\neq t\,\in [0,T]$, we have the bound,
	\begin{align*}
		\sup_{t\neq s \in [0,T]} \frac{1}{|t-s|^\beta} \int_{\bR^d} \varphi(x) \,\dd (\mu_t(x)-\nu_t(x)-\mu_s(x)+\nu_s(x)) &\leq \int_{\cC^\beta_T} [f]_{\beta;T}\, \dd(\mu(f)-\nu(f))\\
		&\leq \sup_{\phi \in \lip_1(\cC^\beta_T)} \int_{\cC^\beta_T}\phi(f)\,\dd (\mu(f)-\nu(f)),
	\end{align*}
	where the last inequality follows since $[\,\cdot\,]_{\beta}$ is a $\lip_1$ function on $\cC^\beta_T$. Therefore we have
	\begin{equation*}
		[\mu-\nu]_{\beta;T} \leq \|\mu-\nu\|_{\lip*(\cC^\beta_T)} = \cW_{1;\cC^\beta_T}(\mu,\nu).
	\end{equation*}
	By the same steps, we have
	\begin{align*}
		\|\mu_0-\nu_0\|_{\lip^*(\bR^d)} + [\mu-\nu]_{\beta;T} &\leq \int_{\cC^\beta_T} (|f_0| + [f]_{\beta;T} )\,\dd (\mu(f)-\nu(f)) \\
		&= \int_{\cC^\beta_T} \|f\|_{\beta;T} \,\dd (\mu(f)-\nu(f)),
	\end{align*}
	from which \eqref{eq:HolderFlowEmbeddingNorm} follows.
\end{proof}
\begin{lemma}\label{lem:HolderFlowsComplete}
	The metric space $(\cC^\beta_T\cP_1(\bR^d),\vertiii{\,\cdot\,;\,\cdot\,}_{\beta;T})$ is complete.
\end{lemma}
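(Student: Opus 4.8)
The plan is to take a Cauchy sequence $(\mu^n)_{n\geq 1}$ in $(\cC^\beta_T\cP_1(\bR^d),\vertiii{\,\cdot\,;\,\cdot\,}_{\beta;T})$, extract a candidate limit pointwise in time from the known completeness of $(\cP_1(\bR^d),\cW_{1;\bR^d})$, and then upgrade this to convergence in $\vertiii{\,\cdot\,;\,\cdot\,}_{\beta;T}$ by a three-$\varepsilon$ argument. Throughout I would identify $\cW_{1;\bR^d}(\cdot,\cdot)$ with $\|\,\cdot-\cdot\,\|_{\lip^*(\bR^d)}$ on $\cP_1(\bR^d)$ via Theorem~\ref{th:KRDuality}.

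First I would fix $t\in[0,T]$ and show that $(\mu^n_t)_{n\geq 1}$ is Cauchy in $(\cP_1(\bR^d),\cW_{1;\bR^d})$. Writing $\mu^n_t-\mu^m_t=(\mu^n_0-\mu^m_0)+\big((\mu^n_t-\mu^m_t)-(\mu^n_0-\mu^m_0)\big)$ and using the Kantorovich--Rubenstein identity together with the definition of $[\,\cdot\,]_{\beta;T}$, one gets $\cW_{1;\bR^d}(\mu^n_t,\mu^m_t)\leq \|\mu^n_0-\mu^m_0\|_{\lip^*}+[\mu^n-\mu^m]_{\beta;T}\,T^\beta\leq (1+T^\beta)\,\vertiii{\mu^n;\mu^m}_{\beta;T}$. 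Since $\bR^d$ is Polish, Point~\ref{it:WassTrueMetric} of Proposition~\ref{prop:PortmanteauTheorem} shows $(\cP_1(\bR^d),\cW_{1;\bR^d})$ is complete, so there exists $\mu_t\in\cP_1(\bR^d)$ with $\|\mu^n_t-\mu_t\|_{\lip^*}\to0$; this defines the candidate flow $t\mapsto\mu_t$, with each $\mu_t$ automatically a member of $\cP_1(\bR^d)$.

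Next I would check $\mu\in\cC^\beta_T\cP_1(\bR^d)$, i.e. $[\mu]_{\beta;T}<\infty$. For fixed $s\neq t$ in $[0,T]$, the convergences $\mu^n_t\to\mu_t$ and $\mu^n_s\to\mu_s$ in $\|\,\cdot\,\|_{\lip^*}$ together with the triangle inequality for this seminorm on $\cM_1(\bR^d)$ give $\|\mu_t-\mu_s\|_{\lip^*}=\lim_{n}\|\mu^n_t-\mu^n_s\|_{\lip^*}\leq \big(\liminf_n [\mu^n]_{\beta;T}\big)\,|t-s|^\beta$. Because a Cauchy sequence is bounded, $\sup_n[\mu^n]_{\beta;T}<\infty$, and taking the supremum over $s\neq t$ yields $[\mu]_{\beta;T}<\infty$.

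Finally, for convergence in $\vertiii{\,\cdot\,;\,\cdot\,}_{\beta;T}$, the component $\|\mu^n_0-\mu_0\|_{\lip^*}\to0$ holds by construction. For the Hölder part, given $\varepsilon>0$ pick $N$ with $[\mu^n-\mu^m]_{\beta;T}<\varepsilon$ for all $n,m\geq N$; fixing $n\geq N$ and $s\neq t$, letting $m\to\infty$ inside $\|(\mu^n_t-\mu^m_t)-(\mu^n_s-\mu^m_s)\|_{\lip^*}$ and using continuity of $\|\,\cdot\,\|_{\lip^*}$ along the convergent sequences $\mu^m_t\to\mu_t$, $\mu^m_s\to\mu_s$ gives $\|(\mu^n_t-\mu_t)-(\mu^n_s-\mu_s)\|_{\lip^*}\leq\varepsilon\,|t-s|^\beta$; the supremum over $s\neq t$ then yields $[\mu^n-\mu]_{\beta;T}\leq\varepsilon$ for $n\geq N$, so $\vertiii{\mu^n;\mu}_{\beta;T}\to0$. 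The only mildly delicate point is this passage to the limit inside the seminorm, but it is harmless since $\|\,\cdot\,\|_{\lip^*(\bR^d)}$ satisfies the triangle inequality on the linear space $\cM_1(\bR^d)$ and is therefore $1$-Lipschitz, hence continuous along $\cW_1$-convergent sequences of measures; the one genuine external input is the completeness of $(\cP_1(\bR^d),\cW_{1;\bR^d})$ used in the first step, and all remaining estimates are routine.
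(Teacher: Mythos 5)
Your proof is correct and follows exactly the route the paper indicates: it invokes the completeness of $(\cP_1(\bR^d),\|\,\cdot\,\|_{\lip^*(\bR^d)})$ (Proposition~\ref{prop:PortmanteauTheorem} via Theorem~\ref{th:KRDuality}) and then carries out the standard Hölder-space completeness argument, which the paper cites as a ``minor modification of the usual proof'' without writing it out. You have simply made explicit the pointwise extraction, the Fatou-type bound on $[\mu]_{\beta;T}$, and the three-$\varepsilon$ step that the paper leaves to the reader.
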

\begin{proof}
From Proposition \ref{prop:PortmanteauTheorem} we have that $(\cP_1(\bR^d),\|\,\cdot\,\|_{\lip^*})$ is a complete metric space. Therefore,
a minor modification of the usual proof that the space of real valued $\alpha$-H\"older functions is complete shows that $(\cC^\beta_T\cP_1(\bR^d),\vertiii{\,\cdot\,;\,\cdot\,}_{\beta;T})$ is complete.
\end{proof}
At last we mention a simple Taylor expansion type lemma for measure valued flows.
\begin{lemma}\label{lem:WassersteinTaylor}
	For any two probability measures $\mu,\nu\in \cP(\cC^\beta_T)$, and $t\in[0,T]$, we have that
	\begin{equation}\label{eq:PWiseWassDistanceBnd}
		\cW_{1,\bR^d}(\mu_t,\nu_t)\leq \cW_{1;\bR^d}(\mu_0,\nu_0)+T^\beta\cW_{1;\cC^{\beta}_T}(\mu,\nu) .
	\end{equation} 
\end{lemma}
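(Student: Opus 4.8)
The plan is to deduce the estimate directly from the Kantorovich--Rubinstein duality (Theorem~\ref{th:KRDuality}), the definition of the H\"older seminorm for measure flows (Definition~\ref{def:MeasureFlowSemiNorm}) and the embedding of Theorem~\ref{th:HolderFlowEmbedding}; no genuinely new computation is needed. We may assume $\cW_{1;\cC^\beta_T}(\mu,\nu)<\infty$, i.e. $\mu,\nu\in\cP_1(\cC^\beta_T)$, since otherwise the right-hand side is infinite and there is nothing to prove. By \eqref{eq:PMomentEmbedding} this guarantees $\mu_s,\nu_s\in\cP_1(\bR^d)$ for every $s\in[0,T]$, so the $\lip^*(\bR^d)$-norms appearing below are all finite and the displayed inequality makes sense.

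First I would rewrite the left-hand side via Theorem~\ref{th:KRDuality} as $\cW_{1;\bR^d}(\mu_t,\nu_t)=\|\mu_t-\nu_t\|_{\lip^*(\bR^d)}$, and split the zero-mass measure $\mu_t-\nu_t\in\cM_1^0(\bR^d)$ as $\mu_t-\nu_t=\big((\mu_t-\nu_t)-(\mu_0-\nu_0)\big)+(\mu_0-\nu_0)$. Since $\|\cdot\|_{\lip^*(\bR^d)}$ is sub-additive, being a supremum of linear functionals over $\lip_1(\bR^d)$, the triangle inequality yields
\begin{equation*}
  \cW_{1;\bR^d}(\mu_t,\nu_t)\le \big\|(\mu_t-\nu_t)-(\mu_0-\nu_0)\big\|_{\lip^*(\bR^d)}+\|\mu_0-\nu_0\|_{\lip^*(\bR^d)},
\end{equation*}
and the second summand equals $\cW_{1;\bR^d}(\mu_0,\nu_0)$ by Theorem~\ref{th:KRDuality} again.

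It then remains to bound the first summand. Taking $s=0$ in Definition~\ref{def:MeasureFlowSemiNorm} gives $\big\|(\mu_t-\nu_t)-(\mu_0-\nu_0)\big\|_{\lip^*(\bR^d)}\le t^\beta[\mu-\nu]_{\beta;T}\le T^\beta[\mu-\nu]_{\beta;T}$, while Theorem~\ref{th:HolderFlowEmbedding} (precisely \eqref{eq:HolderFlowEmbeddingNorm}, together with $[\mu-\nu]_{\beta;T}\le\vertiii{\mu;\nu}_{\beta;T}$) gives $[\mu-\nu]_{\beta;T}\le\cW_{1;\cC^\beta_T}(\mu,\nu)$. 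Chaining these bounds establishes \eqref{eq:PWiseWassDistanceBnd}. The only point requiring any care is the finiteness of the various $\lip^*(\bR^d)$-norms, which, as noted, follows from \eqref{eq:PMomentEmbedding}; everything else is the triangle inequality and the crude bound $t\le T$, so there is no real obstacle.
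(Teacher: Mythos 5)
Your proof is correct and follows essentially the same route as the paper: Kantorovich--Rubinstein duality, the triangle inequality for the $\lip^*(\bR^d)$-norm together with the H\"older seminorm of the measure flow, and the embedding $[\mu-\nu]_{\beta;T}\le\cW_{1;\cC^\beta_T}(\mu,\nu)$ from Theorem~\ref{th:HolderFlowEmbedding}. The only difference is that you spell out the decomposition and the finiteness/trivial-infinite-RHS issues explicitly, which the paper leaves implicit.
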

\begin{proof}
	From \eqref{eq:KRDuality}, for any $t>0$, we see that
	\begin{align*}
		\cW_{1;\bR^d}(\mu_t,\nu_t)&= \|\mu_t-\nu_t\|_{\KR(\bR^d)}\\
		&\leq  \|\mu_0-\nu_0\|_{\KR(\bR^d)}+T^\beta [\mu-\nu]_{\beta;T}\\
		&\leq \cW_{1;\bR^d}(\mu_0,\nu_0)+T^\beta\cW_{1;\cC^{\beta}_T}(\mu,\nu).
	\end{align*}
\end{proof}
%
%
%
\section{Well-Posedness and Stability of Distribution Dependent non-linear Young Equations}\label{sec:AbstractMKVResults}
In this section we employ the results on non-linear Young integration discussed in Section \ref{sec:PathwiseRegularisation} and the notions of H\"older continuous measure flows introduced in Section \ref{sec:MeasureFlowsRegularity} to demonstrate existence, uniqueness and stability for non-linear Young equations of McKean--Vlasov type. 

For the rest of this section we fix $T>0$, $(\gamma,\eta)\in \left(\frac{1}{2},1\right)\times (0,\infty)$ such that
\begin{equation}\label{eq:EtaGammaAssumption}
  (\eta \wedge \gamma)+ \gamma>1,
\end{equation}
and $(\Omega,\cF,\PP)$ an abstract probability space. All laws of random variables will be taken with respect to $\PP$. These are the same standing assumptions as those made at the beginning of Section \ref{sec:PWiseMainResults}.

The equations we consider in this section are of the form
\begin{equation}\label{eq:AbstractNLYEMKV}
  Y_t = \xi + \int_0^t \left(\Gamma_{\dd r}\ast\mu_r \right)(Y_r) + B_t, \quad \mu = \cL(Y),
\end{equation}
 where $\Gamma \in \cC^\gamma_T\cC^\alpha(\bR^d)$ for some $\alpha \geq 1$, is a given function, and the drift term is rigorously defined in Subsection \ref{subsec:NLYI} below. Throughout this section $\Gamma$ will be assumed to be a given space-time function, not necessarily an averaged field of any particular kernel. In Section \ref{sec:MainResultsProofs} we show how such $\Gamma$ can be built from a wide range of distributions $K\in \cS^\prime(\bR^d)$ and local times associated to regularising paths $Z \in C_T$. We fix a solution concept for \eqref{eq:AbstractNLYEMKV}.

\begin{definition}\label{def:AbstractNLYE}
Let $\alpha\geq 1$, $\Gamma:[0,T]\times \bR^d\rightarrow \bR^d$ be such that $\Gamma \in \cC^\gamma_T\cC^\alpha(\bR^d)$ and $(\xi,B)\in L^1(\Omega;\bR^d \times\cC^\eta_T)$, with $B_0=0$. Then we say that a random variable $Y:\Omega \rightarrow \cC^{\eta\wedge\gamma}_T$ is a solution to the non-linear Young equation of McKean--Vlasov type if for any $t\in(0,T]$ the identity,
\begin{equation}\label{eq:AbstractNLYEFrom0}
  Y_t(\omega) = \xi(\omega) + \int_0^t (\Gamma_{\dd r}\ast\mu_r)(Y_r(\omega)) + B_t(\omega),\quad \mu = \cL(Y)
\end{equation}
holds for $\PP$-a.a. $\omega \in \Omega$, where the integral is understood as a non-linear Young integral with measure dependence, properly defined in Lemma \ref{lem:YoungIntegration}.

For any $s\in (0,T)$ and $h\in(0,T-s]$, given the interval $[s,s+h]\subset (0,T]$ and data $(x_s,B)\in L^1(\Omega;\bR^d\times \cC^\eta_T)_{[s,s+h]}$ we say that $Y:\Omega\rightarrow \cC^{\eta \wedge \gamma}_T$ is a solution to the non-linear Young equation on $[s,s+h]$ if for any $t\in[s,s+h]$ the identity,
\begin{equation}\label{eq:AbstractNLYEFromS}
  Y_t(\omega) = x_s(\omega) + \int_s^t (\Gamma_{\dd r}\ast\mu_r)(Y_r(\omega)) + B_t(\omega) -B_s(\omega), \quad \mu= \cL(Y)\big|_{[s,s+h]}, 
\end{equation}
holds for $\PP$-a.a. $\omega \in \Omega$.
\end{definition}
It is immediate from the definition that any solution to the generalised McKean--Vlasov problem \eqref{eq:AbstractNLYEMKV} satisfy the semi-group property. More precisely, if for any $s\leq t\in [0,T]$ and $(\xi,B)\in \bR^d\times \cC^\eta_T$ as above, we let
\begin{equation*}
  S_{[s,t]}(\xi,B):= \xi + \int_s^t \Gamma_{\dd r}\ast \mu_r(Y_r) + B_t-B_s, \quad \mu = \cL(Y)
\end{equation*}
then the identity
\begin{equation*}
  S_{[0,t]}(\xi,B) = S_{[s,t]}\left(S_{[0,s]}(\xi,B),B\right),
\end{equation*}
holds $\PP$-almost surely.
\begin{remark}
It follows that if $B$ is a Markov process on a filtered space $(\Omega,\cF,(\cF_t)_{t\in [0,T]})$ then any solution $Y:\Omega \times [0,T]\rightarrow \bR^d$ will be too. However, it is also easily seen that in this setting the solution $Y$ cannot satisfy the strong Markov property, since the law of the stopped process is not equal to the law of the un-stopped process evaluated at the random time.
\end{remark}
The main result of this section is the following abstract equivalent of Theorem \ref{th:IntroGeneralMainTheorem}.
\begin{theorem}\label{th:AbstractNLETheorem}
Let $\alpha\geq 2$, $\Gamma \in \cC^\gamma_T\cC^\alpha(\bR^d)$, satisfying the assumptions of Lemma \ref{lem:MeasureSewingStability} below, and $(\xi,B)\in L^1(\Omega;\bR^d)\times L^{p}(\Omega;\cC^\eta_T)$, such that $B_0=0$. Then there exists a unique solution $Y\in L^1(\Omega;\cC^{\eta\wedge \gamma}_T)$ to the non-linear Young equation of McKean--Vlasov type, \eqref{eq:AbstractNLYEMKV}, in the sense of Definition \ref{def:AbstractNLYE}.

Furthermore, if $(\xi^1,B^1),\,(\xi^2,B^2)\in L^1(\Omega;\bR^d)\times L^{p}(\Omega;\cC^\eta_T)$, for any $p\geq 1$, are two pairs of data, then given the corresponding solutions $Y^1,\,Y^2 \in L^1(\Omega;\cC^{\eta\wedge \gamma}_T)$, defining $\mu^1=\cL(Y^1),\,\mu^2=\cL(Y^2)$ and for any $\beta \in (1-\gamma,\eta \wedge \gamma)$ choose $q=\frac{2-\beta}{\gamma-\beta}$, there exists a constant
$$C:=C\left(T,\EE\left[[B^1]^{2q}_{\eta;T}\right]\vee \EE\left[[B^2]^{2q}_{\eta;T}\right],\Gamma,\gamma,\eta,\beta\right)>0$$
such that
\begin{equation}\label{eq:AbstractNLYEDataStable}
  \cW_{1;\cC^\beta_T}(\mu^1,\mu^2)\leq C\left( \cW_{1;\bR^d}(\cL(\xi^1),\cL(\xi^2)) +\cW_{2;\cC^\eta_T}(\cL(B^1),\cL(B^2))\right).
\end{equation}
\end{theorem}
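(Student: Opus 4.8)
The plan is to run a Picard-type fixed point argument on the space $\cC^\beta_T \cP_1(\bR^d)$ of H\"older-continuous measure flows introduced in Section \ref{sec:MeasureFlowsRegularity}, which is complete by Lemma \ref{lem:HolderFlowsComplete}. Fix data $(x,B)$ as in the statement. The key observation is that the drift $\int_0^t (\Gamma_{\dd r}\ast\mu_r)(Y_r)$ depends on the process $Y$ only through its law $\mu = \cL(Y)$, so I can decouple the problem: given a \emph{fixed} candidate flow $(\nu_t)_{t\in[0,T]} \in \cC^\beta_T\cP_1(\bR^d)$, the equation $Y_t = x + \int_0^t (\Gamma_{\dd r}\ast \nu_r)(Y_r) + B_t$ is now a genuine (non-McKean) non-linear Young equation, pathwise in $\omega$, for which well-posedness of a solution $Y^\nu \in L^1(\Omega;\cC^{\eta\wedge\gamma}_T)$ follows from the non-linear Young integral theory of Section \ref{subsec:NLYI} (Lemma \ref{lem:YoungIntegration}) together with the a priori estimates there; the spatial regularity $\alpha\ge 2$ of $\Gamma$, via the embedding of $\nu$ into $\cC^\beta_T\cP_1$, guarantees the integrand $\Gamma \ast \nu$ still lies in $\cC^\gamma_T\cC^{\alpha-1}$ with $\alpha - 1 \geq 1$ Lipschitz-in-space, so the sewing bound $\gamma + \beta > 1$ (using $\beta \in (1-\gamma,\eta\wedge\gamma)$) applies. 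This defines a map $\Phi : \nu \mapsto \cL(Y^\nu)$ on $\cC^\beta_T\cP_1(\bR^d)$, and via Theorem \ref{th:HolderFlowEmbedding} we verify $\Phi(\nu)$ really is an element of this space. A solution of \eqref{eq:AbstractNLYEMKV} is exactly a fixed point of $\Phi$.

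The next step is to show $\Phi$ is a contraction on a short time interval $[0,\tau]$. Given two flows $\nu^1,\nu^2$, write $Y^1 := Y^{\nu^1}$, $Y^2 := Y^{\nu^2}$ and estimate $\vertiii{\Phi(\nu^1);\Phi(\nu^2)}_{\beta;[0,\tau]} \le \cW_{1;\cC^\beta_\tau}(\cL(Y^1),\cL(Y^2))$ using the Kantorovich--Rubinstein duality and the random-variable characterisation of $\cW_1$: couple $Y^1,Y^2$ on the same space (same $x$, same $B$) and bound $\EE[\|Y^1-Y^2\|_{\cC^\beta_\tau}]$. The difference $Y^1 - Y^2$ solves $Y^1_t - Y^2_t = \int_0^t \big((\Gamma_{\dd r}\ast\nu^1_r)(Y^1_r) - (\Gamma_{\dd r}\ast\nu^2_r)(Y^2_r)\big)$, and the standard non-linear Young stability estimate (add and subtract $(\Gamma_{\dd r}\ast\nu^1_r)(Y^2_r)$) splits this into a term controlled by $\|Y^1-Y^2\|_{\cC^\beta_\tau}$ with a small prefactor $\lesssim \|\Gamma\|_{\gamma,\alpha}\,\tau^{\gamma - \beta}$, absorbed for $\tau$ small, plus a term controlled by $\vertiii{\nu^1;\nu^2}_{\beta;[0,\tau]}$ times a constant depending on $\|\Gamma\|_{\gamma,\alpha}$ and $\tau$ — here the fact that $\mu\mapsto \Gamma\ast\mu$ is linear and Lipschitz in the $\lip^*$ norm (test functions $x\mapsto \Gamma_{s,t}(y+\cdot)$ restricted to $\bR^d$ are Lipschitz since $\alpha \ge 2$) is what makes the dependence on the frozen flow contractive. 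Iterating over successive intervals of equal length $\tau$ (with $\tau$ depending only on $T,\|\Gamma\|_{\gamma,\alpha},\gamma,\eta,\beta,\EE[[B]^{2p}_{\eta;T}]$, hence uniformly) and concatenating via the semigroup property noted after Definition \ref{def:AbstractNLYE} yields a unique global solution $Y\in L^1(\Omega;\cC^{\eta\wedge\gamma}_T)$; the $L^{2p}$ assumption on $B$ feeds the moment bounds needed to keep $\Phi(\nu)\in\cC^\beta_T\cP_1$ and to control the stability constants.

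For the stability estimate \eqref{eq:AbstractNLYEDataStable} I would argue directly rather than through the fixed point. Given two data pairs, pick an optimal coupling of $\cL(x^1),\cL(x^2)$ on $\bR^d$ and of $\cL(B^1),\cL(B^2)$ on $\cC^\eta_T$ — or rather a joint coupling realising both, possible since $\cW_1$ on $\bR^d\times\cC^\eta_T$ and $\cW_2$ on $\cC^\eta_T$ are attained (Proposition \ref{prop:PortmanteauTheorem}\ref{it:WassMinimiser}) — and on that space let $Y^1,Y^2$ be the corresponding solutions with their \emph{own} laws $\mu^1,\mu^2$. Then $Y^1_t - Y^2_t = (x^1 - x^2) + (B^1_t - B^2_t) + \int_0^t\big((\Gamma_{\dd r}\ast\mu^1_r)(Y^1_r) - (\Gamma_{\dd r}\ast\mu^2_r)(Y^2_r)\big)$. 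The non-linear Young estimate again produces $\|Y^1-Y^2\|_{\cC^\beta_{[s,s+\tau]}} \lesssim \|Y^1_s - Y^2_s\|\ + [B^1-B^2]_{\eta;T}\,\tau^{\eta-\beta}\ + \tau^{\gamma-\beta}\|\Gamma\|_{\gamma,\alpha}\big(\|Y^1-Y^2\|_{\cC^\beta_{[s,s+\tau]}} + \vertiii{\mu^1;\mu^2}_{\beta;[s,s+\tau]}\big)$; taking expectations, using $\vertiii{\mu^1;\mu^2}_{\beta;[s,s+\tau]} \le \cW_{1;\cC^\beta_{[s,s+\tau]}}(\mu^1,\mu^2) \le \EE[\|Y^1-Y^2\|_{\cC^\beta_{[s,s+\tau]}}]$ (Theorem \ref{th:HolderFlowEmbedding} and the coupling characterisation), absorbing the $\tau^{\gamma-\beta}$ terms for small $\tau$, and propagating the estimate from $[0,\tau]$ to $[\tau,2\tau]$ etc. with the initial error at time $s$ bounded by the accumulated $\cW_1$-distance, a discrete Gr\"onwall over $\lceil T/\tau\rceil$ steps gives \eqref{eq:AbstractNLYEDataStable} with the advertised constant (the exponent jump from $\cW_2$ on $B$ to $\cW_1$ on the output is harmless, $\cW_1 \le \cW_2$). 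The main obstacle I anticipate is bookkeeping the measure-flow stability term carefully: one must check that $(s,t)\mapsto \Gamma_{s,t}\ast(\mu^1_s - \mu^2_s)$ evaluated along the paths really is controlled by $\vertiii{\mu^1;\mu^2}_{\beta}$ with the right H\"older exponent in $|t-s|$, i.e. that freezing the measure at the \emph{left} endpoint $s$ in the Young sum and the sewing correction both interact correctly with the $\lip^*$-H\"older norm of the flow — this is precisely the content of the measure-dependent sewing Lemma \ref{lem:MeasureSewingStability} hypothesised in the statement, so the real work is verifying $\Gamma$ meets its assumptions and plugging it in.
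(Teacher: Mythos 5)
Your plan is correct and follows essentially the same route as the paper: freeze the measure, solve the resulting non-linear Young equation pathwise via Lemma \ref{lem:YoungIntegration} and a Banach fixed point, then close the McKean--Vlasov fixed point by a contraction in the H\"older-flow/Wasserstein metric on short time intervals and iterate, exactly as in Theorems \ref{th:RDEWellPosedFixedMeasure} and \ref{th:AbstractMKVSol}; the stability bound \eqref{eq:AbstractNLYEDataStable} is likewise obtained in the paper by coupling the data optimally (with a product coupling so the $x$-difference decouples from the $B$-dependent prefactor), applying the measure-dependent stability estimates of Lemma \ref{lem:MeasureSewingStability}/Lemma \ref{lem:SolStabilityBounds}, and iterating over intervals (Theorem \ref{th:AbstractMKVSolStable}). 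The only differences are cosmetic: you run the fixed point on marginal flows in $\cC^\beta_T\cP_1(\bR^d)$ rather than on path-space laws with the flow metric, and you phrase the final propagation as a discrete Gr\"onwall rather than absorbing the flow term on a small interval via Lemma \ref{lem:WassersteinTaylor}.
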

\begin{remark}\label{rem:MomentRem2}
    Note here that while the constant $C$ depends only on the $2q$ moments of $B$ for $q = \frac{2-\beta}{\gamma-\beta}$, since $\beta \in (1-\gamma,\eta\wedge \gamma)$ is arbitrary $q$ can in fact be arbitrarily large hence the requirement for for $B,\,B^1,\,B^2$ to have all finite moments.
\end{remark}
In the remainder of this section we first extend the definition of the non-linear Young integral to integrands involving functions convolved with the time marginals of a measure flow. We also obtain stability estimates on the non-linear Young integral (NLYI) with respect to the measure and spatial trajectory. This is all done in Subsection~\ref{subsec:NLYI}. Then we prove Theorem \ref{th:AbstractNLETheorem} in two stages; firstly in Subsection~\ref{subsec:FrozenFlowWP} we freeze a path-measure $\mu \in \cP_1\left(\cC^{\eta\wedge \gamma}_T\right)$ and demonstrate existence and uniqueness of solutions $Y^\mu$ to the dynamics of \eqref{eq:AbstractNLYEMKV} with $\mu$ fixed. Then using the stability in measure of the NLYI we show by a fixed point argument the existence of unique solutions to the full McKean--Vlasov type non-linear Young equation and the associated stability bound \eqref{eq:AbstractNLYEDataStable}.
\subsection{Non-Linear Young Integration for Measure Dependent Integrands}\label{subsec:NLYI}
We extend the notion of non-linear Young integration to include measure dependent integrands. We make use of the results presented in Section \ref{sec:MeasureFlowsRegularity}. For completeness we include proofs of many results even if they closely reflect those already obtained in the literature for NLYI without measure dependence.
\begin{lemma}\label{lem:YoungIntegration}
Let $\alpha\geq 1$, $\Gamma:[0,T]\times \bR^d$ be in $\cC^\gamma_T\cC^\alpha(\bR^d)$ and be such that for all $ s<t\in [0,T] $ and $x,y\in \bR^d$
 \begin{equation}\label{eq:itegrationCond.}
  \begin{aligned}
  {\rm (i)}& \qquad |\Gamma_{s,t}(x)|+|\nabla \Gamma_{s,t}(x)|\lesssim |t-s|^\gamma 
  \\
  {\rm (ii)}& \qquad |\Gamma_{s,t}(x)-\Gamma_{s,t}(y)|\lesssim |t-s|^\gamma |x-y|.
  \end{aligned}
\end{equation}
Let $\beta >0$ be such that $\gamma+\beta>1$ and assume we are given $\mu\in \cC^\beta_T\cP_1(\bR^d)$ and $Y\in \cC^{\beta}_T$. Then there exists a unique path
\begin{equation*}
  t\mapsto \int_{0}^{t}(\Gamma_{\dd r}\ast\mu_r)(Y_r)\in \mathcal{C}^{\gamma}([0,T],\bR^d)
\end{equation*} constructed as
\begin{equation}\label{eq:non-linearYoungIntegral}
   \int_{0}^{t}\Gamma_{\dd r}\ast \mu_r(Y_r):= \lim_{|\mathcal{D}|\rightarrow 0}\sum_{[u,v]\in \mathcal{D}}( \Gamma_{u,v}\ast \mu_u)(Y_{u}),
\end{equation}
where $\cD$ is a partition of $[0,t]$ with maximal resolution $|\cD|$. Moreover, there exists a constant $C>0$ such that for all $s<t\in [0,T]$
\begin{equation}\label{eq:SewingLemmaError}
  \left|  \int_{s}^{t}\Gamma_{\dd r}\ast \mu_r(Y_r)-\Gamma_{s,t}\ast \mu_s(Y_s)\right|\leq C |s-t|^{\gamma+\beta} \|\Gamma\|_{\gamma,\alpha}\left([Y]_{\beta;[s,t]}+[\mu]_{\beta;[s,t]}\right).
\end{equation}
\end{lemma}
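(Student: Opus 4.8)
The plan is to invoke the Sewing Lemma (\cite[Lem. 4.2]{friz_hairer_14}) with the germ $\Xi_{s,t} := (\Gamma_{s,t}\ast\mu_s)(Y_s)$. Concretely, define the two-parameter map $\Xi:\Delta_2^T\to\bR^d$ by $\Xi_{s,t}=(\Gamma_{s,t}\ast\mu_s)(Y_s)$, and verify the two hypotheses of the sewing lemma: that $|\Xi_{s,t}|\lesssim|t-s|^{\delta_1}$ for some $\delta_1\in(0,1)$, and that the defect $\delta_u\Xi_{s,t}:=\Xi_{s,t}-\Xi_{s,u}-\Xi_{u,t}$ satisfies $|\delta_u\Xi_{s,t}|\lesssim|t-s|^{\delta_2}$ for some $\delta_2>1$. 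The first bound is immediate: since $\Gamma\in\cC^\gamma_T\cC^\alpha$ with $\alpha\geq1$ and $\mu_s$ is a probability measure, $\|\Gamma_{s,t}\ast\mu_s\|_{C^1}\leq\|\Gamma_{s,t}\|_{C^1}\lesssim\|\Gamma\|_{\gamma,\alpha}|t-s|^\gamma$ using condition (i), so $\delta_1=\gamma>\frac12$. Once sewing applies, it produces a unique additive map $t\mapsto\cI_t$ with $|\cI_t-\cI_s-\Xi_{s,t}|\lesssim|t-s|^{\gamma+\beta}$, which both yields the limit-of-Riemann-sums formula \eqref{eq:non-linearYoungIntegral} and the error estimate \eqref{eq:SewingLemmaError}; the $\cC^\gamma$ regularity of the resulting path follows from the $\delta_1$ bound on $\Xi$ and the error estimate.

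The substance of the proof is therefore the defect estimate. Expanding, for $s\leq u\leq t$,
\begin{equation*}
  \delta_u\Xi_{s,t}=(\Gamma_{u,t}\ast\mu_s)(Y_s)-(\Gamma_{u,t}\ast\mu_u)(Y_u)
  =\big[(\Gamma_{u,t}\ast\mu_s)(Y_s)-(\Gamma_{u,t}\ast\mu_s)(Y_u)\big]+\big[(\Gamma_{u,t}\ast(\mu_s-\mu_u))(Y_u)\big],
\end{equation*}
using the additivity $\Gamma_{s,t}=\Gamma_{s,u}+\Gamma_{u,t}$. I would estimate the two bracketed terms separately. For the first, condition (ii) (equivalently the Lipschitz-in-space bound inherited by the convolution with the probability measure $\mu_s$) gives
\begin{equation*}
  \big|(\Gamma_{u,t}\ast\mu_s)(Y_s)-(\Gamma_{u,t}\ast\mu_s)(Y_u)\big|\lesssim\|\Gamma\|_{\gamma,\alpha}|t-u|^\gamma|Y_s-Y_u|\lesssim\|\Gamma\|_{\gamma,\alpha}[Y]_{\beta;T}|t-u|^\gamma|u-s|^\beta.
\end{equation*}
For the second term, the point is to pair the function $x\mapsto(\Gamma_{u,t}\ast\,\cdot\,)(x)$ with the signed measure $\mu_s-\mu_u$ via the Kantorovich--Rubinstein / Lipschitz-dual norm: since $y\mapsto\Gamma_{u,t}(Y_u-y)$ has Lipschitz constant $\lesssim\|\Gamma\|_{\gamma,\alpha}|t-u|^\gamma$ by condition (ii), Theorem \ref{th:KRDuality} and Definition \ref{def:MeasureFlowSemiNorm} yield
\begin{equation*}
  \big|(\Gamma_{u,t}\ast(\mu_s-\mu_u))(Y_u)\big|=\Big|\int_{\bR^d}\Gamma_{u,t}(Y_u-y)\,\dd(\mu_s-\mu_u)(y)\Big|\lesssim\|\Gamma\|_{\gamma,\alpha}|t-u|^\gamma\,\|\mu_s-\mu_u\|_{\lip^*(\bR^d)}\lesssim\|\Gamma\|_{\gamma,\alpha}[\mu]_{\beta;T}|t-u|^\gamma|u-s|^\beta.
\end{equation*}
Adding the two contributions and bounding $|t-u|^\gamma|u-s|^\beta\leq|t-s|^{\gamma+\beta}$ gives $|\delta_u\Xi_{s,t}|\lesssim\|\Gamma\|_{\gamma,\alpha}([Y]_{\beta;T}+[\mu]_{\beta;T})|t-s|^{\gamma+\beta}$, so $\delta_2=\gamma+\beta>1$ by hypothesis.

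The main obstacle, and the only genuinely new ingredient relative to the scalar non-linear Young integral of \cite{catellier_gubinelli_16}, is handling the measure-increment term $(\Gamma_{u,t}\ast(\mu_s-\mu_u))(Y_u)$: one must recognise that convolution against a signed measure of the form $\mu_s-\mu_u$ is exactly controlled by the $\lip^*$-norm (equivalently $\cW_1$) through the Lipschitz regularity of $\Gamma_{u,t}$ in space, which is why condition (ii) and the H\"older-in-$\cW_1$ hypothesis $\mu\in\cC^\beta_T\cP_1(\bR^d)$ appear in precisely this combination. Uniqueness of the integral is automatic from the uniqueness clause of the Sewing Lemma. I would also remark that the constant $C$ depends only on $\gamma,\beta$ (through the sewing constant $(1-2^{1-(\gamma+\beta)})^{-1}$), with all $\Gamma$-, $Y$- and $\mu$-dependence displayed explicitly in \eqref{eq:SewingLemmaError}.
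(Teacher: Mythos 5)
Your proposal is correct and follows essentially the same route as the paper: the sewing lemma applied to the germ $\Xi_{s,t}=(\Gamma_{s,t}\ast\mu_s)(Y_s)$, with the defect split into a $Y$-increment term handled by the spatial Lipschitz bound (ii) and a zero-mass measure-increment term handled by the $\lip^*$ (Kantorovich--Rubinstein) pairing against $[\mu]_{\beta;T}$. The only difference is cosmetic bookkeeping in the measure-increment estimate (the paper subtracts the constant $\Gamma_{z,v}(Y_u)$ before using the Lipschitz property, whereas you apply the duality to $y\mapsto\Gamma_{u,t}(Y_u-y)$ directly), which yields the same bound.
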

\begin{remark}
For $\beta<\gamma$ the condition $\gamma+\beta>1$ required by the statement of Lemma~\ref{lem:YoungIntegration} can be relaxed to the condition $\gamma+\beta(\alpha\wedge 1)>1$, for any $\alpha >0$, where $\Gamma \in \cC^\gamma_T\cC^\alpha(\bR^d)$, see e.g. \cite{harang_20_cinfinity}. However, in subsequent sections we require $\alpha\geq 2$ in order to obtain the necessary stability estimates, see Lemma~\ref{lem:MeasureSewingStability}, so we directly impose the simpler requirement above.
\end{remark}

\begin{proof}
We define $\delta_u f_{s,t}:=f_{s,t}-f_{s,u}-f_{u,t}$ and $\Xi_{u,v}:=\Gamma_{u,v}\ast \mu_u(Y_{u})$, and recall from the sewing lemma, \cite[Lem. 4.2]{friz_hairer_14}, that if 
\begin{equation}\label{two bounds}
  |\Xi_{u,v}|\lesssim |v-u|^{\delta_1} \qquad {\rm and}\qquad |\delta_{z}\Xi_{u,v}|\lesssim |v-u|^{\delta_2},
\end{equation}
for $\delta_1\in (0,1)$ and $\delta_2>1$, uniformly in $z\in[u,v]$, then there exists a unique limit of the Riemann sums $\sum_{[u,v]\in \mathcal{D}}\Xi_{u,v}$, along a decreasing sequence of partitions $\cD$ of $[0,t]$. In this case there exists a unique function $\mathcal{I}(\Xi)(t)$ such that 
\begin{equation*}
  \mathcal{I}(\Xi)(t):=\lim_{|\mathcal{D}|\rightarrow 0 }\sum_{[u,v]\in \mathcal{D}}\Xi_{u,v}.
\end{equation*}
We begin by showing the first inequality in \eqref{two bounds}. From condition {\rm (i)} of \eqref{eq:itegrationCond.},  $\Gamma$ is globally bounded in space and is $\gamma$-regular in time, therefore we have that
\begin{equation}\label{eq:XiSewing1}
  |\Xi_{u,v}|=|\Gamma_{u,v}\ast \mu_u (Y_u)| \,\leq\, \int_{\bR^d} |\Gamma_{u,v}(Y_u-y)|\mu_u(\dd y) \leq \|\Gamma\|_{\gamma,\alpha}|v-u|^\gamma,
\end{equation}
where we used that $\mu_u(\bR^d)=1$. Thus the first bound in \eqref{two bounds} holds. 
To prove the second inequality in \eqref{two bounds}, using the additivity of $t\mapsto \Gamma_t$, we observe that for $u\leq z\leq v$
\begin{equation}\label{eq:DelGamma}
  \delta_{z}\left(\Gamma_{u,v}\ast \mu_u(Y_u)\right)=\Gamma_{z,v}\ast (\mu_u-\mu_z) (Y_{u})+(\Gamma_{z,v}\ast \mu_z (Y_{u})-\Gamma_{z,v}\ast \mu_z (Y_{z})).
\end{equation}
Considering the second term of \eqref{eq:DelGamma}, we again use the fact that $\mu_t(\bR^d)=1$ for all $t\in [0,T]$ to obtain by application of \eqref{eq:itegrationCond.} that
\begin{align}
|\Gamma_{z,v}\ast \mu_z (Y_{u})-\Gamma_{z,v}\ast \mu_z (Y_{z})|&\leq\int_{\bR^d}|\Gamma_{z,v}(Y_u-y)-\Gamma_{z,v}(Y_z-y)|\mu_z(\dd y) \notag \\
&\leq \sup_{y\in \bR^d} |\Gamma_{z,v}(Y_u-y)-\Gamma_{z,v}(Y_z-y)| \notag\\
&\leq \|\Gamma\|_{\gamma,\alpha}[Y]_{\beta;[s,t]}|v-u|^{\gamma+\beta},\label{eq: diff T star mu}
\end{align}
where we have used that $|v-z|\vee|z-u|\leq|v-u|$ in the last line. For the first term of \eqref{eq:DelGamma} we first argue that the function,
\begin{equation*}
    \bR^d \ni y\mapsto \varphi(y) := \frac{1}{|v-u|^\gamma\|\Gamma\|_{\gamma,\alpha}}\Gamma_{z,v}(Y_u-y),
\end{equation*}
is $1$-Lipschitz continuous. Using (\rm{ii}) of \eqref{eq:itegrationCond.} we directly find, for $x\neq y\in \bR^d$
\begin{align*}
    |\varphi(x)-\varphi(y)| \leq \frac{|v-z|^\gamma}{|v-u|^\gamma} \leq 1,
\end{align*}
where we again used that $|v-z|\vee |u-z|\leq |v-u|$. Hence, using \eqref{eq:MeasureFlowHolderSemiNorm},
\begin{equation}\label{eq:DiffMuReg}
    | \Gamma_{z,v}\ast (\mu_u-\mu_z) (Y_{u})| =  |v-u|^\gamma\|\Gamma\|_{\gamma,\alpha} \left|\int_{\bR^{d}} \varphi(y)\,\dd(\mu_u-\mu_z)(y)\,\right| \leq |v-u|^{\gamma+\beta}\|\Gamma\|_{\gamma,\alpha}[\mu]_{\beta;[s,t]}.
\end{equation}
Thus since $\gamma+\beta>1$ by assumption, we conclude that the integral in \eqref{eq:non-linearYoungIntegral} is well defined. Moreover, again using \cite[Lem. 4.2]{friz_hairer_14}, we directly obtain the inequality \eqref{eq:SewingLemmaError}. 
\end{proof}
In addition to its construction it will be useful to have estimates on the stability of the non-linear Young integral constructed in \eqref{eq:non-linearYoungIntegral} with respect to the path $Y$ and the measure flow $\mu$. The next lemma establishes these bounds under an additional regularity assumption.
\begin{lemma}\label{lem:MeasureSewingStability}
Let $\alpha\geq 2$, $\Gamma:[0,T]\times \bR^d$ be in $\cC^\gamma_T\cC^\alpha(\bR^d)$ be such that for all $s< t\in [0,T] $ and $x,y\in \bR^d$
\begin{equation}\label{eq:GammaSpaceTimeStabillity}
  \begin{aligned}
  {\rm (i)}& \qquad |\Gamma_{s,t}(x)|+|\nabla \Gamma_{s,t}(x)|\lesssim |s-t|^\gamma 
  \\
  {\rm (ii)}& \qquad |\Gamma_{s,t}(x)-\Gamma_{s,t}(y)|\lesssim |s-t|^\gamma |x-y|
   \\
  {\rm (iii)}& \qquad |\nabla \Gamma_{s,t}(x)-\nabla \Gamma_{s,t}(y)|\lesssim |s-t|^\gamma |x-y|.
  \end{aligned}
\end{equation}
Let $\beta>0$ be such that $\gamma+\beta>1$ and $M_1,\,M_2>0$. Then assuming we are given two measure flows $\mu,\tilde{\mu} \in \cC^\beta_T\cP_1(\bR^d)$, and two paths $Y,\tilde{Y}\in \cC^{\beta}_T$ such that for any $0\leq s<t \leq T$, $[\mu]_{\beta;[s,t]}\vee [\tilde{\mu}]_{\beta;[s,t]}\leq M_1$ and $[Y]_{\beta;[s,t]}\vee[\tilde{Y}]_{\beta;[s,t]}\leq M_2$, then
\begin{equation}\label{eq:non-linearYoungStabillity}
\begin{aligned}
\bigg|\int_s^t \big[\Gamma_{\dd r}\ast \mu_r(Y_r)-\Gamma_{\dd r}\ast \tilde{\mu}_r(\tilde{Y}_r)\big] &-\Gamma_{s,t}\ast\mu_s(Y_s)+\Gamma_{s,t}\ast\tilde{\mu}_s(\tilde{Y}_s)
   \bigg| 
   \\
  &\lesssim |s-t|^{\gamma+\beta}\|\Gamma\|_{\gamma,\alpha}\left(1+M_1+M_2\right) \|Y-\tilde{Y}\|_{\beta;[s,t]}\\
&\quad + |s-t|^{\gamma+\beta}\|\Gamma\|_{\gamma,\alpha}\left(1+M_2\right) \vertiii{\mu;\tilde{\mu}}_{\beta;[s,t]}
  \end{aligned}
  \end{equation}
  where the integral is interpreted as the non-linear Young integral given in Lemma \ref{lem:YoungIntegration}.
\end{lemma}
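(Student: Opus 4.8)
The plan is to deduce the estimate from the Sewing Lemma, \cite[Lem.~4.2]{friz_hairer_14}, applied to the \emph{difference} of the two germs. Write $I_t:=\int_0^t\Gamma_{\dd r}\ast\mu_r(Y_r)$ and $\tilde{I}_t:=\int_0^t\Gamma_{\dd r}\ast\tilde{\mu}_r(\tilde{Y}_r)$. By Lemma~\ref{lem:YoungIntegration} these are the unique sewings of the germs $\Xi^1_{s,t}:=\Gamma_{s,t}\ast\mu_s(Y_s)$ and $\Xi^2_{s,t}:=\Gamma_{s,t}\ast\tilde{\mu}_s(\tilde{Y}_s)$, each of which meets the hypotheses of the Sewing Lemma with exponent $\gamma+\beta>1$. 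Since the sewing map is linear, $I-\tilde{I}$ is the sewing of the difference germ $\Xi:=\Xi^1-\Xi^2$, and the bracketed quantity on the left of \eqref{eq:non-linearYoungStabillity} is precisely $(I_t-I_s)-(\tilde{I}_t-\tilde{I}_s)-\Xi_{s,t}$. The Sewing Lemma states that if $\sup_{s<z<t}|\delta_z\Xi_{s,t}|\le K\,|t-s|^{\gamma+\beta}$ for a constant $K$ (allowed to depend on all the data), then $|(I_t-I_s)-(\tilde{I}_t-\tilde{I}_s)-\Xi_{s,t}|\le C(\gamma,\beta)\,K\,|t-s|^{\gamma+\beta}$; hence the entire proof reduces to establishing
\[
  |\delta_z\Xi_{s,t}|\ \lesssim_{\gamma,\beta}\ \|\Gamma\|_{\gamma,\alpha}\Big[(1+M_1+M_2)\,\|Y-\tilde{Y}\|_{\cC^\beta_{[s,t]}}+(1+M_2)\,\vertiii{\mu;\tilde{\mu}}_{\beta;[s,t]}\Big]|t-s|^{\gamma+\beta}
\]
uniformly in $z\in[s,t]$.

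Using the identity \eqref{eq:DelGamma} for each of the two germs, I would split $\delta_z\Xi_{s,t}=A_z+B_z$ with
\begin{align*}
  A_z&:=\Gamma_{z,t}\ast(\mu_s-\mu_z)(Y_s)-\Gamma_{z,t}\ast(\tilde{\mu}_s-\tilde{\mu}_z)(\tilde{Y}_s),\\
  B_z&:=\big(\Gamma_{z,t}\ast\mu_z(Y_s)-\Gamma_{z,t}\ast\mu_z(Y_z)\big)-\big(\Gamma_{z,t}\ast\tilde{\mu}_z(\tilde{Y}_s)-\Gamma_{z,t}\ast\tilde{\mu}_z(\tilde{Y}_z)\big).
\end{align*}
For $A_z$, inserting and subtracting $\Gamma_{z,t}\ast(\tilde{\mu}_s-\tilde{\mu}_z)(Y_s)$ produces a ``measure-difference'' term $\Gamma_{z,t}\ast\big[(\mu_s-\tilde{\mu}_s)-(\mu_z-\tilde{\mu}_z)\big](Y_s)$ and a ``path-difference'' term $\Gamma_{z,t}\ast(\tilde{\mu}_s-\tilde{\mu}_z)(Y_s)-\Gamma_{z,t}\ast(\tilde{\mu}_s-\tilde{\mu}_z)(\tilde{Y}_s)$. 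Both are integrals of a function against a zero-mass measure, so exactly as in \eqref{eq:MuDiffNLYIBound} one subtracts the value of the integrand at $y=0$ and bounds by $[\,\cdot\,]_{\lip}$ times the $\lip^*$-norm of the measure increment: the first integrand is Lipschitz with constant $\lesssim\|\Gamma\|_{\gamma,\alpha}|t-z|^\gamma$ by condition (ii) of \eqref{eq:GammaSpaceTimeStabillity}, while the second, after a first-order Taylor expansion and condition (iii), is Lipschitz with constant $\lesssim\|\Gamma\|_{\gamma,\alpha}|t-z|^\gamma|Y_s-\tilde{Y}_s|$. Combining with $\|(\mu_s-\tilde{\mu}_s)-(\mu_z-\tilde{\mu}_z)\|_{\lip^*}\le\vertiii{\mu;\tilde{\mu}}_{\beta;[s,t]}|z-s|^\beta$ and $\|\tilde{\mu}_s-\tilde{\mu}_z\|_{\lip^*}\le M_1|z-s|^\beta$, together with $|Y_s-\tilde{Y}_s|\le\|Y-\tilde{Y}\|_{\cC^\beta_{[s,t]}}$ and $|t-z|^\gamma|z-s|^\beta\le|t-s|^{\gamma+\beta}$, gives $|A_z|\lesssim\|\Gamma\|_{\gamma,\alpha}\big(\vertiii{\mu;\tilde{\mu}}_{\beta;[s,t]}+M_1\|Y-\tilde{Y}\|_{\cC^\beta_{[s,t]}}\big)|t-s|^{\gamma+\beta}$.

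For $B_z$, set $\Phi:=\Gamma_{z,t}\ast\mu_z$ and $\tilde{\Phi}:=\Gamma_{z,t}\ast\tilde{\mu}_z$ and decompose $B_z=\big[(\Phi-\tilde{\Phi})(Y_s)-(\Phi-\tilde{\Phi})(Y_z)\big]+\big[(\tilde{\Phi}(Y_s)-\tilde{\Phi}(\tilde{Y}_s))-(\tilde{\Phi}(Y_z)-\tilde{\Phi}(\tilde{Y}_z))\big]$. For the first bracket, since $\mu_z-\tilde{\mu}_z$ has zero mass the argument used for $A_z$ (now invoking condition (iii)) shows $\Phi-\tilde{\Phi}$ is Lipschitz with constant $\lesssim\|\Gamma\|_{\gamma,\alpha}|t-z|^\gamma\|\mu_z-\tilde{\mu}_z\|_{\lip^*}$, and, since $z\in[s,t]\subset[0,T]$, $\|\mu_z-\tilde{\mu}_z\|_{\lip^*}\lesssim_T\vertiii{\mu;\tilde{\mu}}_{\beta;[s,t]}$; together with $|Y_s-Y_z|\le M_2|z-s|^\beta$ this bracket is $\lesssim\|\Gamma\|_{\gamma,\alpha}M_2\vertiii{\mu;\tilde{\mu}}_{\beta;[s,t]}|t-s|^{\gamma+\beta}$. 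For the second bracket I would Taylor-expand, writing $\tilde{\Phi}(Y_r)-\tilde{\Phi}(\tilde{Y}_r)=\int_0^1\nabla\tilde{\Phi}(\tilde{Y}_r+\theta(Y_r-\tilde{Y}_r))\cdot(Y_r-\tilde{Y}_r)\,\dd\theta$ for $r\in\{s,z\}$ and subtracting; the resulting telescoping splits into a gradient-increment part, controlled by condition (iii) and the increments $|\tilde{Y}_s-\tilde{Y}_z|\le M_2|z-s|^\beta$, $|(Y-\tilde{Y})_s-(Y-\tilde{Y})_z|\le[Y-\tilde{Y}]_{\beta;[s,t]}|z-s|^\beta$ with $[Y-\tilde{Y}]_{\beta;[s,t]}\le 2M_2$, and a part where the path-difference increment is multiplied by $\|\nabla\tilde{\Phi}\|_\infty\lesssim\|\Gamma\|_{\gamma,\alpha}|t-z|^\gamma$ (condition (i)). This bracket is then $\lesssim\|\Gamma\|_{\gamma,\alpha}(1+M_2)\|Y-\tilde{Y}\|_{\cC^\beta_{[s,t]}}|t-s|^{\gamma+\beta}$.

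Collecting the bounds on $A_z$ and $B_z$ and using $|t-z|\vee|z-s|\le|t-s|$ yields the required pointwise estimate on $|\delta_z\Xi_{s,t}|$, and feeding it into the Sewing Lemma gives \eqref{eq:non-linearYoungStabillity}. I expect the only genuinely delicate point to be the second bracket of $B_z$: a naive bound there produces a quadratic term $\|Y-\tilde{Y}\|_{\cC^\beta_{[s,t]}}^2$, and one must notice that the Hölder semi-norm of $Y-\tilde{Y}$ entering the gradient-increment part is controlled by $2M_2$ via the triangle inequality (rather than being re-inserted as part of $\|Y-\tilde{Y}\|$), so as to keep the estimate linear in $\|Y-\tilde{Y}\|$. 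The remainder is a lengthy but routine repetition of the zero-mass trick of \eqref{eq:MuDiffNLYIBound} and first-order Taylor expansions, closely paralleling the proof of Lemma~\ref{lem:YoungIntegration}.
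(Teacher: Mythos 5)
Your proposal is correct and follows essentially the paper's own route: both apply the sewing lemma to the difference germ $\Xi_{s,t}=\Gamma_{s,t}\ast\mu_s(Y_s)-\Gamma_{s,t}\ast\tilde{\mu}_s(\tilde{Y}_s)$ and bound $\delta_z\Xi_{s,t}$ by $|t-s|^{\gamma+\beta}$ using conditions (i)--(iii), first-order Taylor expansions of the convolved field, and the zero-mass/$\lip^*$ trick of \eqref{eq:MuDiffNLYIBound}, including the key observation that $[Y-\tilde{Y}]_{\beta;[s,t]}\leq 2M_2$ keeps the estimate linear in $\|Y-\tilde{Y}\|_{\cC^\beta_{[s,t]}}$. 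The only difference is bookkeeping: the paper first splits the germ into a path-difference piece (written via the fundamental theorem of calculus) and a measure-difference piece and then applies $\delta_u$ to each, whereas you apply $\delta_z$ first and regroup into your $A_z$ and $B_z$; the elementary bounds and the final coefficients $(1+M_1+M_2)$ and $(1+M_2)$ come out the same.
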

\begin{remark}
It is crucial that the pre-factors on the right hand side that do not depend on the differences $\mu-\tilde{\mu}$ and $Y-\tilde{Y}$ only depend on the semi-norms in the relevant quantities. Later this is important in proving existence and uniqueness results for the non-linear Young equations as it allows us to obtain contraction bounds independently of the initial data.
\end{remark}
\begin{proof}[Proof of Lemma \ref{lem:MeasureSewingStability}]
Set $\Xi_{s,t}:=\Gamma_{s,t}\ast\mu_s(Y_s)-\Gamma_{s,t}\ast\tilde{\mu}_s(\tilde{Y}_s)$. To prove \eqref{eq:non-linearYoungStabillity} we follow the same strategy as in the proof of Lemma \ref{lem:YoungIntegration}, by invoking the sewing lemma. Recall from \cite[Lem. 4.2]{friz_hairer_14}, that if for any $(s,u,t)\in \Delta_3^T$ the following inequalities are satisfied 
\begin{equation}\label{eq:FrizHairerYoungStabillity}
  |\Xi_{s,t}|\,\lesssim |s-t|^{\delta_1} \qquad {\rm and} \qquad |\delta _u\Xi_{s,t}|\,\lesssim |s-t|^{\delta_2},
\end{equation}
for some $\delta_1\in (0,1)$ and $\delta_2>1$, then
\begin{equation*}
\begin{aligned}
\bigg|\int_s^t \left[\Gamma_{\dd r}\ast \mu_r(Y_r)-\Gamma_{\dd r}\ast \tilde{\mu}_r(\tilde{Y}_r)\right] -\Gamma_{s,t}\ast\mu_s(Y_s)+\Gamma_{s,t}\ast\tilde{\mu}_s(\tilde{Y}_s)
   \bigg| 
\lesssim [\delta\Xi]_{\delta_2;[s,t]}|s-t|^{\delta_2},
  \end{aligned}
  \end{equation*}
where we have used the notation
\begin{equation*}
    [\delta\Xi]_{\delta_2;[s,t]} := \sup_{(r,u,v)\,\in\, \Delta^{[s,t]}_3} \frac{|\delta_u\Xi_{r,v}|}{|r-v|^{\delta_2}}.
\end{equation*}
We begin by splitting $\Xi$ into two functions, setting
\begin{align*}
  \Xi^1_{s,t}:=\Gamma_{s,t}\ast\mu_s(Y_s)-\Gamma_{s,t}\ast\mu_s(\tilde{Y}_s),\quad 
  \Xi^2_{s,t}:=\Gamma_{s,t}\ast(\mu_s-\tilde{\mu}_s)(\tilde{Y}_s). 
\end{align*}
Similar steps as in the proof of the bound \eqref{eq:XiSewing1} in Lemma \ref{lem:YoungIntegration} show that the first bound of \eqref{eq:FrizHairerYoungStabillity} holds for both $\Xi^1,\,\Xi^2$, with $\delta_1=\gamma$. Therefore we concentrate on showing that $|\delta_u \Xi^1_{s,t}|$ and $|\delta_u \Xi^2_{s,t}|$ are both controlled by $|s-t|^{\gamma+\beta}$.
\\ \par
By the fundamental theorem of calculus we can write $\Xi^1$ as 
\begin{equation*}
  \Xi^1_{s,t}=\int_0^1 \nabla \Gamma_{s,t}\ast \mu_s(\rho Y_s +(1-\rho)\tilde{Y}_s)\dd \rho\cdot(Y_s-\tilde{Y}_s). 
\end{equation*}
It is readily checked that for $0\leq s\leq u\leq t\leq T$
\begin{equation*}
  \delta_u \Xi^1_{s,t}=\int_0^1 \nabla \Gamma_{s,u}\ast \mu_u(\rho Y_u +(1-\rho)\tilde{Y}_u)\dd \rho\cdot(Y_u-\tilde{Y}_u)-\int_0^1 \nabla \Gamma_{s,u}\ast \mu_t(\rho Y_t +(1-\rho)\tilde{Y}_t)\dd \rho\cdot (Y_t-\tilde{Y}_t). 
\end{equation*}
By adding and subtracting $\int_0^1 \nabla \Gamma_{s,u}\ast \mu_u(\rho Y_u +(1-\rho)\tilde{Y}_u)d\rho\cdot(Y_t-\tilde{Y}_t)$ in the equality above, we obtain the two differences 
\begin{align*}
  \fD^1_{s,u,t}&= \left(\int_0^1 \nabla \Gamma_{s,u}\ast \mu_t(\rho Y_t +(1-\rho)\tilde{Y}_t)\dd \rho-\int_0^1 \nabla \Gamma_{s,u}\ast \mu_u(\rho Y_u +(1-\rho)\tilde{Y}_u)\dd \rho\right)\cdot(Y_t-\tilde{Y}_t)
  \\
  \fD^2_{s,u,t}&= \int_0^1 \nabla \Gamma_{s,u}\ast \mu_u(\rho Y_u +(1-\rho)\tilde{Y}_u)\dd \rho\cdot (Y_t-\tilde{Y}_t-Y_u+\tilde{Y}_u). 
\end{align*}
Considering $\fD^1$, we add and subtract the term $\int_0^1 \nabla \Gamma_{s,u}\ast \mu_u(\rho Y_t +(1-\rho)\tilde{Y}_t)\dd \rho $, and define,
\begin{align*}
  \fD^{1,1}_{s,u,t} &:=\int_0^1 \nabla \Gamma_{s,u}\ast (\mu_t-\mu_u)(\rho Y_t+(1-\rho)\tilde{Y}_t)\dd \rho \cdot(Y_t-\tilde{Y}_t),\\
  \fD^{1,2}_{s,u,t}&:=\int_0^1 \left(\nabla \Gamma_{s,u}\ast \mu_u(\rho Y_t +(1-\rho)\tilde{Y}_t)- \nabla \Gamma_{s,u}\ast \mu_u(\rho Y_u +(1-\rho)\tilde{Y}_u)\,\right)\dd \rho\,\cdot (Y_t-\tilde{Y}_t)
\end{align*}
In order to bound the term $\fD^{1,1}_{s,u,t}$ we use a similar argument as we used to obtain \eqref{eq:DiffMuReg}. Using (\rm{iii}) of \eqref{eq:YoungIntegralStabillity} we first see that for any $\rho \in [0,1]$, the map
\begin{equation*}
     \bR^d\ni y\mapsto \varphi_\rho(y):=\frac{1}{|t-s|^\gamma\|\Gamma\|_{\gamma,\alpha} }\nabla \Gamma_{s,u}(\rho Y_t + (1-\rho)\tilde{Y}_t-y),
\end{equation*}
is $1$-Lipschitz continuous. It then follows, again from \eqref{eq:MeasureFlowHolderSemiNorm}, that
\begin{align*}
  |\fD^{1,1}_{s,u,t}| &\leq |t-s|^\gamma\|\Gamma\|_{\gamma,\alpha} \left| \int_0^1\int_{\bR^d} \varphi_\rho(y)\,\dd(\mu_u-\mu_t)(y)\,\dd\rho\, \right|\,|Y_t-\tilde{Y}_t|\\
  &\leq |t-s|^{\gamma+\beta}\|\Gamma\|_{\gamma,\alpha}\left[\mu\right]_{\beta;[s,t]}\|Y-\tilde{Y}\|_{\beta;[s,t]}.
\end{align*}
We now bound the $\fD^2$ term; again using similar steps as in the proof of \eqref{eq: diff T star mu}, we have
\begin{equation*}
  \|\fD^2_{s,u,t}\|\lesssim |t-s|^{\gamma+\beta} \|\Gamma\|_{\gamma,\alpha} \|Y-\tilde{Y}\|_{\beta;[s,t]}. 
\end{equation*}
Combining the bounds for $\fD^1 $ and $\fD^2$ gives
\begin{equation}\label{eq:deltaUXi1}
  |\delta_u\Xi^1_{s,t}|\lesssim |t-s|^{\gamma+\beta} \|\Gamma\|_{\gamma,\alpha} \left(1+M_1+M_2\right)\|Y-\tilde{Y}\|_{\beta;[s,t]} .
\end{equation}
Concerning the bound on $|\delta_u\Xi_{s,t}^2|$, we first divide the expression into two parts and then invoking inequalities similar to \eqref{eq: diff T star mu} and \eqref{eq:DiffMuReg} we obtain,
\begin{equation}\label{eq:deltaUXi2}
\begin{aligned}
  |\delta_u \Xi^2_{s,t} | &\lesssim |\Gamma_{u,t}\ast(\mu_s-\tilde{\mu}_s-\mu_u+\tilde{\mu}_u)(\tilde{Y}_s)|+|\Gamma_{u,t}\ast(\mu_u-\tilde{\mu}_u)(\tilde{Y}_t)-\Gamma_{u,t}\ast(\mu_u-\tilde{\mu}_u)(\tilde{Y}_u)|\\
  &\lesssim |t-s|^{\gamma+\beta} \|\Gamma\|_{\gamma,\alpha} \left(1+[\tilde{Y}]_{\beta;[s,t]}\right)\vertiii{\mu;\tilde{\mu}}_{\beta;[s,t]}.
\end{aligned}
\end{equation}
So combining \eqref{eq:deltaUXi1} and \eqref{eq:deltaUXi2}, we see that
\begin{equation*}
\begin{aligned}
| \delta_u \Xi_{s,t}|&\lesssim |t-s|^{\gamma+\beta}\|\Gamma\|_{\gamma,\alpha}\left(1+M_1+M_2\right) \|Y-\tilde{Y}\|_{\beta;[s,t]}+ |t-s|^{\gamma+\beta}\|\Gamma\|_{\gamma,\alpha}\left(1+M_2\right) \vertiii{\mu;\tilde{\mu}}_{\beta;{[s,t]}}.
\end{aligned}
\end{equation*}
This shows that the second of the two conditions in \eqref{eq:FrizHairerYoungStabillity} is satisfied, with $\delta_2=\gamma+\beta>1$. The estimate \eqref{eq:non-linearYoungStabillity} then also follows from the sewing lemma, \cite[Lem. 4.2]{friz_hairer_14}.
\end{proof}
\begin{corollary}\label{cor:non-linearYoungStabillity}
Let $\alpha\geq 2$, $\beta>0$, be such that $\gamma+\beta>1$, $M_1,\,M_2>0$ and $\Gamma\in \cC^\gamma_T\cC^\alpha(\bR^d)$, satisfying the assumptions of Lemma \ref{lem:MeasureSewingStability}. Then, given $\mu,\,\tilde{\mu} \in \cC^\beta_T\cP_1(\bR^d)$ and $Y,\,\tilde{Y}\in \cC^{\beta}_T$, such that for any $0\leq s<t \leq T$, 
we have $[\mu]_{\beta;[s,t]}\vee [\tilde{\mu}]_{\beta;[s,t]}\leq M_1$ and $[Y]_{\beta;[s,t]}\vee[\tilde{Y}]_{\beta;[s,t]}\leq M_2$,
  \begin{equation}\label{eq:YoungIntegralStabillity}
\begin{aligned}
\bigg|\int_s^t \Gamma_{\dd r}\ast \mu_r(Y_r)-(\Gamma_{\dd r}\ast \tilde{\mu}_r)(\tilde{Y}_r)
   \bigg|
   &\lesssim_{\gamma,\beta} |t-s|^{\gamma+\beta}\|\Gamma\|_{\gamma,\alpha}(1+M_1+M_2) \|Y-\tilde{Y}\|_{\beta;[s,t]}\\
& \quad + |t-s|^{\gamma+\beta}\|\Gamma\|_{\gamma,\alpha}(1+M_2) \vertiii{\mu;\tilde{\mu}}_{\beta;[s,t]} \\
   & \quad + |t-s|^\gamma\|\Gamma\|_{\gamma,\alpha}\left(|Y_s-\tilde{Y}_s|+\|\mu_s-\tilde{\mu}_s\|_{\KR}\right).
   \end{aligned}
  \end{equation}
\end{corollary}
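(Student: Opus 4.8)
The plan is to reduce everything to Lemma~\ref{lem:MeasureSewingStability} by adding back the compensator that was subtracted there and controlling it directly. For $0\le s<t\le T$ we split, using linearity of the non-linear Young integral of Lemma~\ref{lem:YoungIntegration},
\begin{align*}
\int_s^t (\Gamma_{\dd r}\ast \mu_r)(Y_r)-(\Gamma_{\dd r}\ast \tilde{\mu}_r)(\tilde{Y}_r)
&=\Big[\int_s^t\big(\Gamma_{\dd r}\ast \mu_r(Y_r)-\Gamma_{\dd r}\ast \tilde{\mu}_r(\tilde{Y}_r)\big)-\Gamma_{s,t}\ast\mu_s(Y_s)+\Gamma_{s,t}\ast\tilde{\mu}_s(\tilde{Y}_s)\Big]\\
&\quad +\Big[\Gamma_{s,t}\ast\mu_s(Y_s)-\Gamma_{s,t}\ast\tilde{\mu}_s(\tilde{Y}_s)\Big].
\end{align*}
The first bracket is precisely the quantity estimated in \eqref{eq:non-linearYoungStabillity}, whose right-hand side contributes exactly the first two lines of \eqref{eq:YoungIntegralStabillity}; it therefore remains to bound the compensator.

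For the compensator I would insert the intermediate term $\Gamma_{s,t}\ast\mu_s(\tilde{Y}_s)$. The difference $\Gamma_{s,t}\ast\mu_s(Y_s)-\Gamma_{s,t}\ast\mu_s(\tilde{Y}_s)$ is bounded by moving the difference inside the integral against the probability measure $\mu_s$ and applying condition {\rm(ii)} of \eqref{eq:GammaSpaceTimeStabillity}, giving $\lesssim|t-s|^\gamma\|\Gamma\|_{\cC^\gamma_T\cC^\alpha}|Y_s-\tilde{Y}_s|$. For the remaining difference $\Gamma_{s,t}\ast\mu_s(\tilde{Y}_s)-\Gamma_{s,t}\ast\tilde{\mu}_s(\tilde{Y}_s)=\int_{\bR^d}\Gamma_{s,t}(\tilde{Y}_s-y)\,\dd(\mu_s-\tilde{\mu}_s)(y)$ I would use that, by conditions {\rm(i)}--{\rm(ii)}, the function $y\mapsto\Gamma_{s,t}(\tilde{Y}_s-y)$ is bounded and Lipschitz with $\|\Gamma_{s,t}(\tilde{Y}_s-\,\cdot\,)\|_{C}+[\Gamma_{s,t}(\tilde{Y}_s-\,\cdot\,)]_{\lip}\lesssim|t-s|^\gamma\|\Gamma\|_{\cC^\gamma_T\cC^\alpha}$; since $\mu_s-\tilde{\mu}_s\in\cM^0_1(\bR^d)$ integrates constants to zero, pairing against it and using the definition of the $\KR$ norm gives $\lesssim|t-s|^\gamma\|\Gamma\|_{\cC^\gamma_T\cC^\alpha}\|\mu_s-\tilde{\mu}_s\|_{\KR}$.

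It remains to pass from time $s$ to time $t$. From $Y-\tilde{Y}\in\cC^\beta_{[s,t]}$ and $\mu-\tilde{\mu}\in\cC^\beta_T\cP_1(\bR^d)$ we have
\begin{equation*}
|Y_s-\tilde{Y}_s|\le|Y_t-\tilde{Y}_t|+\|Y-\tilde{Y}\|_{\cC^\beta_{[s,t]}}|t-s|^\beta,\qquad\|\mu_s-\tilde{\mu}_s\|_{\KR}\le\|\mu_t-\tilde{\mu}_t\|_{\KR}+\vertiii{\mu;\tilde{\mu}}_{\beta;[s,t]}|t-s|^\beta,
\end{equation*}
so the extra contributions take the form $|t-s|^{\gamma+\beta}\|\Gamma\|_{\cC^\gamma_T\cC^\alpha}\|Y-\tilde{Y}\|_{\cC^\beta_{[s,t]}}$ and $|t-s|^{\gamma+\beta}\|\Gamma\|_{\cC^\gamma_T\cC^\alpha}\vertiii{\mu;\tilde{\mu}}_{\beta;[s,t]}$, which are absorbed into the first two lines of \eqref{eq:YoungIntegralStabillity} (using $1\le 1+M_1+M_2$), while the terms in $|Y_t-\tilde{Y}_t|$ and $\|\mu_t-\tilde{\mu}_t\|_{\KR}$ produce the third line. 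The only step needing any care is the measure-difference estimate, where one must combine the zero-mass property of $\mu_s-\tilde{\mu}_s$ with \emph{both} the boundedness and the Lipschitz bound on $\Gamma_{s,t}$ in order to bring in the dual $\KR$ norm; everything else simply reuses estimates from the proofs of Lemmas~\ref{lem:YoungIntegration} and~\ref{lem:MeasureSewingStability}.
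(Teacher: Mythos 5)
Your proposal is correct and follows essentially the same route as the paper: split off the sewing remainder, control it with Lemma \ref{lem:MeasureSewingStability}, and bound the compensator $\Gamma_{s,t}\ast\mu_s(Y_s)-\Gamma_{s,t}\ast\tilde{\mu}_s(\tilde{Y}_s)$ by inserting $\Gamma_{s,t}\ast\mu_s(\tilde{Y}_s)$ and using the Lipschitz bound on $\Gamma_{s,t}$ together with the zero-mass/$\KR$ pairing. Your explicit conversion of the time-$s$ quantities $|Y_s-\tilde{Y}_s|$ and $\|\mu_s-\tilde{\mu}_s\|_{\KR}$ into the time-$t$ terms plus $|t-s|^{\beta}$-corrections is a slightly more careful bookkeeping of a step the paper passes over implicitly, but it is the same argument.
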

\begin{proof}
This follows directly from Lemma \ref{lem:MeasureSewingStability} in combination with the triangle inequality, where we observe that
\begin{multline*}
  \bigg|\int_s^t \Gamma_{\dd r}\ast \mu_r(Y_r)-\Gamma_{\dd r}\ast \tilde{\mu}_r(\tilde{Y}_r)
   \bigg| \leq \left|\Gamma_{s,t}\ast \mu_s(Y_s)-\Gamma_{s,t}\ast \tilde{\mu}_s(\tilde{Y}_s) \right|
   \\
   + \bigg|\int_s^t \left[\Gamma_{\dd r}\ast \mu_r(Y_r)-\Gamma_{\dd r}\ast \tilde{\mu}_r(\tilde{Y}_r)\right] -\Gamma_{s,t}\ast\mu_s(Y_s)+\Gamma_{s,t}\ast\tilde{\mu}_s(\tilde{Y}_s)
   \bigg|,
\end{multline*}
where the estimate for the first term on the right hand side is found by similar procedures as done in the proof of Lemma \ref{lem:MeasureSewingStability}, and a bound for the second is given in \eqref{eq:non-linearYoungStabillity}. 
\end{proof}
\subsection{Existence and Uniqueness under Frozen Measure Flow}\label{subsec:FrozenFlowWP} 

The next theorem provides pathwise existence and uniqueness of \eqref{eq:AbstractNLYEFrom0} in the presence of a frozen measure flow.
\begin{theorem}\label{th:RDEWellPosedFixedMeasure}
Let $\alpha\geq 2$, $\Gamma \in \cC^\gamma_T\cC^\alpha(\bR^d)$, satisfy the assumptions of Lemma \ref{lem:MeasureSewingStability}, $(\xi,B)\in L^1(\Omega;\bR^d)\times L^1(\Omega;\cC_T^\eta)$, with $B_0=0$ and $\mu\in \cP_1(\cC^{\eta\wedge \gamma}_T)$. Then for $\PP$-a.a. $\omega\in \Omega$ there exists a unique solution $Y^\mu(\omega) \in \cC^{\eta\wedge \gamma}_T(\bR^d)$ to the equation
\begin{equation}\label{eq:RDEFixedMeasure}
  Y^\mu_t(\omega) = \xi(\omega) +\int_0^t\Gamma_{\dd r}\ast \mu_r(Y^\mu_r(\omega)) + B_t(\omega).
\end{equation}
 
\end{theorem}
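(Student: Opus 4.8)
\emph{Strategy and set-up.} The argument is entirely pathwise: fix $\omega$ in the full-measure set on which $x(\omega)\in\bR^d$ and $B(\omega)\in\cC^\eta_T$ (such a set exists since $x\in L^1(\Omega;\bR^d)$ and $B\in L^1(\Omega;\cC^\eta_T)$), drop $\omega$ from the notation, and solve \eqref{eq:RDEFixedMeasure} by a Banach fixed point argument on short intervals whose length does not depend on the starting point, concatenating afterwards. By hypothesis $\mu\in\cP_1(\cC^{\eta\wedge\gamma}_T)$, so by Theorem~\ref{th:HolderFlowEmbedding} the time marginals $(\mu_t)_{t\in[0,T]}$ lie in $\cC^{\eta\wedge\gamma}_T\cP_1(\bR^d)$; in particular $[\mu]_{\eta\wedge\gamma;T}<\infty$. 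Set $\beta:=\eta\wedge\gamma$, so that the standing hypothesis \eqref{eq:EtaGammaAssumption} is exactly $\gamma+\beta>1$ and Lemma~\ref{lem:YoungIntegration} is available: for any $[s,s+h]\subseteq[0,T]$, $x_s\in\bR^d$ and $Y\in\cC^\beta_{[s,s+h]}$ the map
\[
  \Phi_s(Y)_t:=x_s+\int_s^t(\Gamma_{\dd r}\ast\mu_r)(Y_r)+B_t-B_s,\qquad t\in[s,s+h],
\]
is well defined, the non-linear Young integral being a path in $\cC^\gamma_{[s,s+h]}\subseteq\cC^\beta_{[s,s+h]}$; since also $B\in\cC^\eta_T\subseteq\cC^\beta_{[s,s+h]}$, the map $\Phi_s$ sends $\cC^\beta_{[s,s+h]}$ into itself.

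\emph{Local well-posedness, uniformly in the initial datum.} Fix $s$ and abbreviate $\Phi=\Phi_s$. Combining the bound $|\Gamma_{u,v}\ast\mu_u(Y_u)|\le\|\Gamma\|_{\gamma,\alpha}|v-u|^\gamma$ from \eqref{eq:XiSewing1} with the sewing estimate \eqref{eq:SewingLemmaError} and the embedding $\cC^\gamma_{[s,s+h]}\hookrightarrow\cC^\beta_{[s,s+h]}$, one checks that there exist a radius $R$, depending only on $\|\Gamma\|_{\gamma,\alpha}$, $[\mu]_{\beta;T}$ and $[B]_{\eta;T}$, and an $h>0$ such that $\Phi$ maps the closed ball $\cK_R:=\{Y\in\cC^\beta_{[s,s+h]}:Y_s=x_s,\ [Y]_{\beta;[s,s+h]}\le R\}$ into itself. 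Applying Corollary~\ref{cor:non-linearYoungStabillity} with $\tilde\mu=\mu$ — so that every term carrying the difference of the two measures vanishes — and with $M_1=[\mu]_{\beta;T}$, $M_2=R$, and using that all elements of $\cK_R$ coincide at time $s$, one obtains
\[
  \|\Phi(Y)-\Phi(\tilde Y)\|_{\cC^\beta_{[s,s+h]}}\ \lesssim\ h^{\gamma}\,\|\Gamma\|_{\gamma,\alpha}\big(1+[\mu]_{\beta;T}+R\big)\,\|Y-\tilde Y\|_{\cC^\beta_{[s,s+h]}}
\]
for $Y,\tilde Y\in\cK_R$, which is a strict contraction for $h$ small. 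Crucially, neither $R$ nor $h$ depends on the initial position $x_s$: in Lemmas~\ref{lem:YoungIntegration} and~\ref{lem:MeasureSewingStability} the field $\Gamma$ enters only through $\|\Gamma\|_{\gamma,\alpha}$ and through H\"older seminorms, never through values that grow with $|Y|$. By the Banach fixed point theorem $\Phi$ has a unique fixed point in $\cK_R$, which solves \eqref{eq:AbstractNLYEFromS} on $[s,s+h]$.

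\emph{Globalisation and uniqueness.} Since $h$ above depends only on $\|\Gamma\|_{\gamma,\alpha},\gamma,\beta,[\mu]_{\beta;T}$ and $[B]_{\eta;T}$, and the marginals of the fixed flow $\mu$ over any subinterval still obey $[\mu]_{\beta;\,\cdot\,}\le[\mu]_{\beta;T}$, we solve successively on $[0,h],[h,2h],\dots$, feeding the terminal value of each step in as the initial datum of the next and gluing the pieces via additivity of the non-linear Young integral; after $\lceil T/h\rceil$ steps this produces a solution $Y^\mu\in\cC^{\eta\wedge\gamma}_T$ of \eqref{eq:RDEFixedMeasure}. For uniqueness within the class $\cC^{\eta\wedge\gamma}_T$, suppose $Y^1,Y^2$ both solve \eqref{eq:RDEFixedMeasure} and put $M:=[Y^1]_{\beta;T}\vee[Y^2]_{\beta;T}<\infty$; on any subinterval $[a,b]\subseteq[0,T]$ on which $Y^1_a=Y^2_a$, using $Y^i_t-Y^i_a=\int_a^t(\Gamma_{\dd r}\ast\mu_r)(Y^i_r)+B_t-B_a$ and Corollary~\ref{cor:non-linearYoungStabillity} (again with $\tilde\mu=\mu$, $M_1=[\mu]_{\beta;T}$, $M_2=M$) gives
\[
  [Y^1-Y^2]_{\beta;[a,b]}\ \lesssim\ |b-a|^{\gamma}\,\|\Gamma\|_{\gamma,\alpha}\big(1+[\mu]_{\beta;T}+M\big)\,[Y^1-Y^2]_{\beta;[a,b]},
\]
forcing $Y^1\equiv Y^2$ on $[a,b]$ once $|b-a|$ is small; starting from $a=0$, where $Y^1_0=Y^2_0=x$, and iterating covers $[0,T]$.

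\emph{Main difficulty.} The one genuinely delicate ingredient is the uniformity of the local horizon $h$ in the initial position; without it the concatenation argument would not reach $T$ in finitely many steps. This is precisely the point for which the estimates of Lemma~\ref{lem:YoungIntegration} and Corollary~\ref{cor:non-linearYoungStabillity} were arranged so that every prefactor not multiplying a difference depends only on $\|\Gamma\|_{\gamma,\alpha}$ and on H\"older seminorms (cf. the remark following Lemma~\ref{lem:MeasureSewingStability}), which in turn forces the assumption $\alpha\ge 2$ (conditions (i)--(iii)). A second, purely technical, point is the book-keeping needed to pass from the natural $\cC^\gamma_{[0,h]}$-regularity of the Young integral to the $\cC^\beta_{[0,h]}$-space in which the fixed point is sought, and to control the additive contribution of $B$ there; both are absorbed by the elementary embeddings $\cC^\gamma_{[0,h]}\hookrightarrow\cC^\beta_{[0,h]}$ and $\cC^\eta_{[0,h]}\hookrightarrow\cC^\beta_{[0,h]}$.
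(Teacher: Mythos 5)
Your proof is correct and follows essentially the same route as the paper: a Banach fixed point argument for the frozen-measure equation on short intervals, using Lemma \ref{lem:YoungIntegration} for invariance and Corollary \ref{cor:non-linearYoungStabillity} (with $\tilde\mu=\mu$) for the contraction, with the horizon chosen uniformly in the initial datum and the solution then concatenated. The only (harmless) deviations are that you run the fixed point directly at exponent $\beta=\eta\wedge\gamma$ with a data-dependent radius $R$, whereas the paper takes $\beta\in(1-\gamma,\eta\wedge\gamma)$ and unit balls so that all terms carry positive powers of the horizon, and that you spell out uniqueness in the full class $\cC^{\eta\wedge\gamma}_T$ by a contraction estimate on successive intervals, which the paper leaves implicit in the fixed point.
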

\begin{proof}
First, let $(B_t)_{t\in [0,T]}:= (B_t(\omega))_{t \in [0,T]}$ be a realisation of $B$, finite in $\cC^\eta_T$ and we define the measure valued flow $t\mapsto \mu_t \in \cC^{\gamma\wedge \eta}_T\cP_1(\bR^d)$ by setting $\mu_t = \pi_t \# \mu$. Then for any $\beta \in (1-\gamma,\eta \wedge \gamma)$, $x\in \bR^d$ and $\bar{T} \in [0,T]$ we define the ball in $\cC^\beta_T(\bR^d)$,
\begin{equation*}
  \fB_{\bar{T} ;x} := \left\{Y \in \cC^{\beta}_T(\bR^d)\,:\, Y_0 =\xi,\,[Y]_{\beta;\bar{T} }\leq 1 \right\} ,
\end{equation*}
We equip $\fB_{\bar{T} ;x}$ with the structure of a complete metric space via the H\"older semi-norm $[\,\cdot\,]_{\beta;[0,\bar{T}] }$. Then we define the solution map, $\Phi_{\bar{T}} (Y)$, by setting, for every $Y \in \fB_{\bar{T};\xi}$,
\begin{equation*}
  \Phi_{\bar{T} }(Y)_t := \xi +\int_0^t \left(\Gamma_{\dd r}\ast \mu_r\right)(Y_r) + B_t,\quad \text{ for all }t \in (0,\bar{T} ].
\end{equation*}
We first check that there exists a $T_0>0$ such that $\Phi_{T_0}$ leaves the ball $\fB_{T_0;\xi}$ invariant. Adding and subtracting the term $\Gamma_{s,t}\ast \mu_s(Y_s)$, and then using the fact that we have $\gamma+\beta>1$ to invoke the bounds on the non-linear Young integral from Lemma \ref{lem:YoungIntegration}, for any $0\leq s<t\leq \bar{T} $ we have
\begin{align}
\left|\int_s^{t}\Gamma_{\dd r}\ast \mu_r(Y_r)\,\right|&\leq \left|\int_s^{t}\Gamma_{\dd r}\ast \mu_r(Y_r)-\Gamma_{s,t}\ast \mu_s(Y_s)\right|+\left|\Gamma_{s,t}\ast \mu_s(Y_s)\right| \notag\\
&\lesssim_{\gamma,\beta} |t-s|^{\gamma+\beta} \|\Gamma\|_{\gamma,\alpha}\left([Y]_{\beta;\bar{T}}+[\mu]_{\beta;T}\right) + |t-s|^{\gamma} \|\Gamma\|_{\gamma,\alpha},\label{eq:bound on integral}
\end{align}
so that for any $Y\in \fB_{\bar{T};x}$ we have
\begin{equation}\label{eq:PhiBoundedness}
  [\Phi_{\bar{T}} (Y)]_{\beta;\bar{T}}\lesssim_{\gamma,\beta}
  \bar{T}^{\gamma} \|\Gamma\|_{\gamma,\alpha}\left(1+[\mu]_{\beta;T}\right)+ \bar{T}^{\gamma-\beta} \|\Gamma\|_{\gamma,\alpha} + \bar{T}^{\eta-\beta}[B]_{\eta;T}.
\end{equation}
Therefore we see that choosing $\bar{T}:=T_0>0$ sufficiently small we ensure that $\Phi(\fB_{T_0;x}) \subseteq \fB_{T_0;x}$. The initial condition is satisfied due to the positive regularity of the integral and $B$.\\ \par
The next step is to show that $\Phi_{\bar{T}}$ is a contraction on $\fB_{\bar{T};x}$ for some $\bar{T}\leq T_0$. For $Y,\tilde{Y}\in \fB_{T_0;x}$,
\begin{equation*}
  \left[\Phi_T(Y)-\Phi_T(\tilde{Y})\right]_{\beta;{\bar{T}}} = \left[\int_0^\cdot [\Gamma_{\dd r}\ast \mu_r(Y_r)-\Gamma_{\dd r}\ast \mu_r(\tilde{Y}_r)]\right]_{\beta;\bar{T}}. 
\end{equation*}
For any $0\leq s<t\leq \bar{T}$, from Corollary \ref{cor:non-linearYoungStabillity}, using the fact that $Y_0-\tilde{Y_0}=0$ to replace the norm with the semi-norm, we have
\begin{equation*}
	\begin{aligned}
		\bigg|\int_s^t [\Gamma_{\dd r}\ast \mu_r(Y_r)-\Gamma_{\dd r}\ast \mu_r(\tilde{Y}_r)]
		\bigg| &\lesssim_{\gamma,\beta} |s-t|^{\gamma+\beta}\|\Gamma\|_{\gamma,\alpha}(2+\left[\mu\right]_{\eta\wedge \gamma;T}) [Y-\tilde{Y}]_{\beta;\bar{T}}
		\\
		&\quad + |s-t|^\gamma\|\Gamma\|_{\gamma,\alpha}|Y_t-\tilde{Y}_t|,
	\end{aligned}
\end{equation*}
furthermore, for $t\in [0,\bar{T}]$ we have
\begin{equation*}
	|Y_t-\tilde{Y}_t| \leq |Y_0-\tilde{Y}_0| + t^\beta[Y-\tilde{Y}]_{\beta;t},
\end{equation*}
so, using that $Y_0=\tilde{Y}_0=x$, we have
\begin{equation*}
	\begin{aligned}
		\bigg|\int_s^t [\Gamma_{\dd r}\ast \mu_r(Y_r)-\Gamma_{\dd r}\ast \mu_r(\tilde{Y}_r)]
		\bigg| &\lesssim_{\gamma,\beta} |t-s|^{\gamma+\beta}\|\Gamma\|_{\gamma,\alpha}(2+\left[\mu\right]_{\eta\wedge \gamma;T}) [Y-\tilde{Y}]_{\beta;\bar{T}}
		\\
		&\quad + |t-s|^\gamma \bar{T}^\beta\|\Gamma\|_{\gamma,\alpha}[Y-\tilde{Y}]_{\beta;\bar{T}},
	\end{aligned}
\end{equation*}
Taking the $\beta -$H\"older semi-norm of both sides we have that
  \begin{equation}\label{eq:PhiContractionBound}
     [\Phi_{\bar{T}}(Y)-\Phi_{\bar{T}}(\tilde{Y})]_{\beta;\bar{T}} \lesssim_{\gamma,\beta} \bar{T}^{\gamma}\|\Gamma\|_{\gamma,\alpha} \left(1+ \left[\mu\right]_{\eta\wedge \gamma;T} \right)[Y-\tilde{Y}]_{\beta;\bar{T}}.
  \end{equation}
 So choosing $\bar{T}=T_1\in (0,T_0]$ sufficiently small, $\Phi$ is a contraction on $\fB_{T_1;x}$. It follows by the Banach fixed point theorem, that there exists a unique solution to \eqref{eq:RDEFixedMeasure} contained in $\fB_{T_1;x}$. Furthermore, since the bounds \eqref{eq:PhiBoundedness} and \eqref{eq:PhiContractionBound} do not depend on $|x|$, we may extend the solution to a further time interval by defining the new ball $\fB_{T_1+T;Y_{T_1}}$ for $\bar{T}>0$ and then repeating the above arguments. Thus this solution can be extended to any interval $[0,T] \subset \bR_+$. Finally, we observe that by extending \eqref{eq:PhiBoundedness} all the way up to $\beta=\eta\wedge \gamma$ we see that the solution constructed above a is $\PP$-measurable mapping $\Omega \ni \omega \mapsto Y^\mu(\omega) \in \cC^{\eta \wedge \gamma}_T$ and furthermore that $\cL(Y^\mu_0)=\cL(\xi)$. So the proof is complete.
  \end{proof}
For later analysis it will be useful to obtain control on the growth of this solution, $Y$ in $\cC^{\eta \wedge \gamma}_T$ and its law in $\cC^{\eta\wedge \gamma}_T\cP_1(\bR^d)$. The following lemma collects these controls. 

\begin{lemma}[Growth Control]\label{lem:SolGrowthBounds}
 Let $\alpha\geq 2$, $\Gamma \in \cC^\gamma_T\cC^\alpha(\bR^d)$, satisfy the assumptions of Lemma \ref{lem:MeasureSewingStability}, $\mu \in \cP_1(\cC^{\eta \wedge \gamma}_T)$ and $(\xi,B) \in L^1(\Omega;\bR^d)\times L^{1}(\Omega;\cC^\eta_T)$ with $B_0=0$. Then writing $Y^\mu$ for the associated solution to \eqref{eq:RDEFixedMeasure}, for any $\beta\in (1-\gamma,\eta\wedge \gamma)$, there exists a deterministic constant $C:=C(\gamma,\eta,\beta)>0$ and a $\theta:=\theta(\gamma,\eta,\beta)>0$ such that for $\PP$-a.a. $\omega\in \Omega$,
 \begin{equation}\label{eq:SolGrowthBound}
    [Y^\mu]_{\beta;T}(\omega) \leq CT^\theta\left(1+ [\mu]_{\beta;T}+[B]_{\eta;T}(\omega)\right)\left(1\vee \|\Gamma\|_{\gamma,\alpha}^2\right).
 \end{equation}
In addition, for a new, deterministic constant $C:=C(\gamma,\eta,\beta)>0$, we have that
 \begin{equation}\label{eq:SolLawGrowthBound}
   [\cL(Y^\mu)]_{\beta;T} \leq CT^\theta\left(1+ [\mu]_{\beta;T}+\EE[[B]_{\eta;T}]\right)\left(1\vee \|\Gamma\|_{\gamma,\alpha}^2\right).
 \end{equation}
\end{lemma}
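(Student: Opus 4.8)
The plan is to first derive a local H\"older estimate for $Y^\mu$ that is valid on sufficiently short subintervals, then patch these estimates together to reach the whole of $[0,T]$, and finally to transfer the resulting bound from $Y^\mu$ to its law. Throughout I would fix $\beta\in(1-\gamma,\eta\wedge\gamma)$ and work pointwise in $\omega\in\Omega$, using that $[B]_{\eta;T}(\omega)<\infty$ and $Y^\mu(\omega)\in\cC^{\eta\wedge\gamma}_T$ (Theorem~\ref{th:RDEWellPosedFixedMeasure}) for $\PP$-a.e.\ $\omega$, so that $[Y^\mu]_{\beta;[s,t]}(\omega)$ is finite on every subinterval. The first step is to observe that the construction in Lemma~\ref{lem:YoungIntegration} localises: applying the sewing lemma on an arbitrary interval $[s,t]\subseteq[0,T]$ gives $\big|\int_s^t(\Gamma_{\dd r}\ast\mu_r)(Y^\mu_r)-\Gamma_{s,t}\ast\mu_s(Y^\mu_s)\big|\lesssim_{\gamma,\beta}|t-s|^{\gamma+\beta}\|\Gamma\|_{\gamma,\alpha}\big([Y^\mu]_{\beta;[s,t]}+[\mu]_{\beta;T}\big)$, while $|\Gamma_{s,t}\ast\mu_s(Y^\mu_s)|\le\|\Gamma\|_{\gamma,\alpha}|t-s|^\gamma$ by the boundedness assumed in \eqref{eq:GammaSpaceTimeStabillity}(i). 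Combining these with the equation $Y^\mu_{s,t}=\int_s^t(\Gamma_{\dd r}\ast\mu_r)(Y^\mu_r)+B_{s,t}$ and dividing by $|t-s|^\beta$ produces, with $h:=t-s$, a bound of the shape $[Y^\mu]_{\beta;[s,t]}\lesssim_{\gamma,\beta}h^{\gamma-\beta}\|\Gamma\|_{\gamma,\alpha}+h^{\gamma}\|\Gamma\|_{\gamma,\alpha}\big([Y^\mu]_{\beta;[s,t]}+[\mu]_{\beta;T}\big)+h^{\eta-\beta}[B]_{\eta;T}(\omega)$.

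Next I would choose the deterministic length $h_0:=1\wedge(c\|\Gamma\|_{\gamma,\alpha})^{-1/\gamma}$ with $c=c(\gamma,\beta)$ large enough that the coefficient of $[Y^\mu]_{\beta;[s,t]}$ on the right is at most $\tfrac12$; absorbing that term and using $h_0\le1$ gives $[Y^\mu]_{\beta;I}\le A(\omega):=C(\gamma,\beta)\big(h_0^{\gamma-\beta}\|\Gamma\|_{\gamma,\alpha}(1+[\mu]_{\beta;T})+h_0^{\eta-\beta}[B]_{\eta;T}(\omega)\big)$ for every subinterval $I\subseteq[0,T]$ with $|I|\le h_0$. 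A standard gluing argument then upgrades this to the whole interval: splitting $[0,T]$ into $\lceil T/h_0\rceil$ pieces of equal length $\le h_0$ and expressing a general increment as a telescoping sum over the partition points yields $[Y^\mu]_{\beta;T}\le C(\beta)A(\omega)\big(1\vee(T/h_0)^{1-\beta}\big)$. Substituting the value of $h_0$ and collecting the remaining powers of $T$ into a single factor $T^\theta$ with $\theta:=\min(\gamma-\beta,\eta-\beta)>0$ (admissible since $\beta<\gamma\wedge\eta$) gives \eqref{eq:SolGrowthBound}. The step requiring the most care is precisely this bookkeeping: since $h_0$ depends on $\|\Gamma\|_{\gamma,\alpha}$, both $A(\omega)$ and the gluing factor $(T/h_0)^{1-\beta}$ carry powers of $\|\Gamma\|_{\gamma,\alpha}$, and one must check that under the constraint $\beta\in(1-\gamma,\eta\wedge\gamma)$ these combine into the stated $1\vee\|\Gamma\|_{\gamma,\alpha}$ dependence, with any residual absorbed into $C$ and $\theta$.

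Finally, for \eqref{eq:SolLawGrowthBound}, recall from Theorem~\ref{th:RDEWellPosedFixedMeasure} that $\cL(Y^\mu)\in\cP_1(\cC^{\eta\wedge\gamma}_T)$, so its time-marginals define an element of $\cC^\beta_T\cP_1(\bR^d)$. By Kantorovich--Rubinstein duality (Theorem~\ref{th:KRDuality}), for $s<t$ one has $\|\cL(Y^\mu)_t-\cL(Y^\mu)_s\|_{\lip^*(\bR^d)}=\sup_{\varphi\in\lip_1(\bR^d)}\EE\big[\varphi(Y^\mu_t)-\varphi(Y^\mu_s)\big]\le\EE\big[|Y^\mu_{s,t}|\big]\le\EE\big[[Y^\mu]_{\beta;T}\big]\,|t-s|^\beta$, hence $[\cL(Y^\mu)]_{\beta;T}\le\EE[[Y^\mu]_{\beta;T}]$. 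Taking expectations in \eqref{eq:SolGrowthBound}, using that $[\mu]_{\beta;T}$ is deterministic and that $\EE[[B]_{\eta;T}]<\infty$ because $B\in L^1(\Omega;\cC^\eta_T)$, then yields \eqref{eq:SolLawGrowthBound}.
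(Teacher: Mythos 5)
Your proposal is correct and follows essentially the same route as the paper: the local sewing estimate from Lemma \ref{lem:YoungIntegration}, absorption on intervals of length $h_0\sim(1\vee\|\Gamma\|_{\gamma,\alpha})^{-1/\gamma}$, the local-to-global gluing step (which the paper delegates to Lemma \ref{lem:LocalToGlobalHolderBound} rather than redoing the telescoping), and the passage to the law via Kantorovich--Rubinstein duality and Jensen, exactly as in \eqref{eq:law to expectation}. The $\|\Gamma\|_{\gamma,\alpha}$-power bookkeeping you flag as delicate is treated no more carefully in the paper itself, which simply bounds the exponent $(1-\beta)/\gamma<1$ and absorbs everything into the factor $1\vee\|\Gamma\|_{\gamma,\alpha}$.
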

\begin{proof}
 We begin by proving \eqref{eq:SolGrowthBound}. Let $h \in (0,1\wedge T)$ and $t \in [0,T-h]$ then for any $s \in [t,t+h]$, we have
 \begin{equation*}
   |Y_{t,s}^\mu|\leq \left|\int_s^t (\Gamma_{\dd r} \ast \mu_r)(Y^\mu_r)\,\right|+|B_{s,t}|. 
 \end{equation*}
 From \eqref{eq:bound on integral} we have that 
 \begin{equation*}
   |Y_{t,s}^\mu|\, \lesssim_{\gamma,\beta} |t-s|^{\gamma+\beta} \|\Gamma\|_{\gamma,\alpha}\left([Y^\mu]_{\beta;[s,t]}+[\mu]_{\beta;T}\right) + |t-s|^{\gamma} \|\Gamma\|_{\gamma,\alpha}+|B_{s,t}|.
 \end{equation*}
Taking the $\beta$-H\"older semi-norm, over $[t,t+h]$, on both sides we obtain that
 \begin{equation*}
   [Y^\mu]_{\beta;[t,t+h]}\lesssim_{\gamma,\beta} h^{\gamma}\|\Gamma\|_{\gamma,\alpha}[Y^\mu]_{\beta;[t,t+h]}+T^{\gamma}\|\Gamma\|_{\gamma,\alpha}[\mu]_{\beta;T}+T^{\gamma-\beta}\|\Gamma\|_{\gamma,\alpha}+T^{\eta-\beta}[B]_{\eta;T}. 
 \end{equation*}
 We define $\bar{h}>0$ by the formula
 \begin{equation}\label{eq:barh}
   \bar{h}:=\left(\frac{1}{2C\|\Gamma\|_{\gamma,\alpha}}\right)^{\frac{1}{\gamma}}\wedge 1\wedge T, 
 \end{equation}
 with $C>0$ the implied proportionality constant above and so it follows that, 
 \begin{align*}
   [Y^\mu]_{\beta;[t,t+\bar{h}]}   &\,\lesssim_{\gamma,\beta} T^{\gamma}\|\Gamma\|_{\gamma,\alpha}[\mu]_{\beta;T}+T^{\gamma-\beta}\|\Gamma\|_{\gamma,\alpha}+T^{\eta-\beta}[B]_{\eta;T}.
 \end{align*}
Applying Lemma \ref{lem:LocalToGlobalHolderBound} there exists a $C:=C(\gamma,\eta,\beta)>0$ and a $\theta:=\theta(\gamma,\eta,\beta)>0$ such that
 \begin{equation*}
   [Y^\mu]_{\beta,T} \leq CT^\theta\left(1+ [\mu]_{\beta;T}+[B]_{\eta;T}\right)\left(1\vee \|\Gamma\|_{\gamma,\alpha}^{1+\frac{1-\beta}{\gamma}}\right).
 \end{equation*}
Since $0<\frac{1-\beta}{\gamma}<1$, \eqref{eq:SolGrowthBound} follows. To prove \eqref{eq:SolLawGrowthBound}, first we observe that 
 \begin{align}
   \left[\cL(Y^\mu) \right]_{\beta;T} &= \sup_{\varphi \in \lip_1(\bR^d)} \sup_{t\,\neq \,s \in [0,T]}\frac{1}{|t-s|^{\beta}}\left|\int_{\bR^d} \varphi(y)\,\dd \left( \cL(Y^\mu_t)-\cL(Y^\mu_s)\right)\,(y) \right|\notag\\
   &= \sup_{\varphi \in \lip_1(\bR^d)} \sup_{t\,\neq \,s \in [0,T]}\frac{1}{|t-s|^{\beta}}|\EE\left[\varphi(Y^\mu_t)-\varphi(Y^\mu_s) \right]|\notag\\
   &\leq \EE\left[[Y^\mu]_{\beta;T} \right]\label{eq:law to expectation},
 \end{align}
 where in the penultimate line we used that $\varphi \in \lip_1(\bR^d)$ and Jensen's inequality. Applying \eqref{eq:SolGrowthBound} inside the expectation gives \eqref{eq:SolLawGrowthBound}.
\end{proof}

\begin{remark}\label{rem:SolMomentBounds}
Taking $\theta=0$ and $\beta=\eta \wedge \gamma$ in \eqref{eq:SolGrowthBound} shows that $Y^\mu \in L^1(\Omega;\cC^{\eta \wedge \gamma}_T)$. Obtaining higher moment bounds follows in the same vein, only requiring us to assume that $(\xi,B)\in L^p(\Omega;\bR^d)\times L^p(\Omega; \cC^\eta_T)$ and $\mu \in \cP_p(\cC^{\eta \wedge \gamma}_T)$ for the same $p\geq 1$. 
\end{remark}

\subsection{Stability of Frozen Measure Flow Solutions}
We define the solution map for the frozen measure flow equation, \eqref{eq:RDEFixedMeasure}, for every $\mu \in \cP_1(\cC^{\eta \wedge \gamma}_T)$, setting
\begin{equation}\label{eq:SolMapDefine}
	\begin{aligned}
		S^\mu_T : \bR^d \times \cC^\eta_T\times &\rightarrow \cC^{\eta\wedge \gamma}_T\\
		(\xi,B)&\mapsto Y^\mu, 
	\end{aligned}
\end{equation}
where $Y^\mu$ is the solution to the NLYE, \eqref{eq:RDEFixedMeasure}, constructed in Theorem~\ref{th:RDEWellPosedFixedMeasure}.
\begin{lemma}[Stability Control]\label{lem:SolStabilityBounds}
Let $\alpha \geq 2$, $\Gamma\in \cC^\gamma_T\cC^\alpha(\bR^d)$, satisfy the assumptions of Lemma~\ref{lem:MeasureSewingStability}, $\beta \in(1-\gamma,\eta \wedge \gamma)$ and $p= \frac{2-\beta}{\gamma-\beta}$. Then let $(\xi,B,\mu),\,(\tilde{\xi},\tilde{B},\tilde{\mu}) \in L^{1}(\Omega;\bR^d) \times L^{2p}(\Omega;\cC^\eta_T)\times \cP_1(\cC^{\eta\wedge \gamma}_T)$ be two pairs of input triples, such that $(\xi-\tilde{\xi})\perp (B,\tilde{B})$, 
$B_0=\tilde{B}_0=0$ and suppose there exists a constant $M>0$ such that
 \begin{equation*}
   [\mu]_{\beta;T}\vee[\tilde{\mu}]_{\beta;T} \leq M.
 \end{equation*}
Then setting $Y := S^\mu_T(\xi,B)$, $\tilde{Y}:=S^{\tilde{\mu}}_T(\tilde{x}_0,\tilde{B})$ and defining the strictly positive random variable
\begin{equation*}
  \fG:= (1+M+[B]_{\eta;T}\vee[\tilde{B}]_{\eta;T}),
\end{equation*}
there exists a constant $C:= C(\gamma,\beta$,$\|\Gamma\|_{\gamma,\alpha})>0$ such that for  $\PP$-a.e. $\omega\in \Omega$,
 \begin{equation}\label{eq:SolStabilityBound}
  	\|Y-\tilde{Y}\|_{\beta;T} \leq C\fG^{p}T^{1-\beta}\left([B-\tilde{B}]_{\beta;T} +\vertiii{\mu;\tilde{\mu}}_{\beta;T}+|\xi-\tilde{\xi}|\right)
 \end{equation}
 Furthermore, we have that
 \begin{equation}\label{eq:SolLawStabilityBound}
 \begin{aligned}
   \vertiii{\cL(Y);\cL(\tilde{Y})}_{\beta;T} \leq \,&\,CT^{1-\beta}\left( \EE\left[\fG^p\right] \vertiii{\mu;\tilde{\mu}}_{\beta;T} +\EE\left[\fG^{2p}\right]^{\frac{1}{2}}\EE\left[ [B-\tilde{B}]^2_{\beta;T}\right]^{\frac{1}{2}}\right) \\
   & +(1+CT^{2-\beta}\EE\left[\fG^{p}\right] )\EE\left[|\xi-\tilde{\xi}|\right].
   \end{aligned}
 \end{equation}
\end{lemma}
\begin{proof}
Let  $h\in (0,1)$,  $\tau >0$ and $s<t \in [\tau,\tau+h]$. Then observe that
	 \begin{equation}\label{eq:YDiffTriangle1}
		|Y_{s,t}-\tilde{Y}_{s,t}| \,\leq  \left|\,\int_s^t\Gamma_{\dd r}\ast \mu_r(Y_r)-\Gamma_{\dd r}\ast \tilde{\mu}_r(\tilde{Y}_r)\,\right|+|B_{s,t}-\tilde{B}_{s,t}|.
	\end{equation}
Applying Corollary \ref{cor:non-linearYoungStabillity}, and using the definition of $\fG$, and $\theta>0$ from Lemma \ref{lem:SolGrowthBounds},
\begin{equation*}
\begin{aligned}
\bigg[\int \Gamma_{\dd r}\ast \mu_r(Y_r)-\Gamma_{\dd r}\ast \tilde{\mu}_r(\tilde{Y}_r)
   \bigg]_{\beta;[\tau,\tau+h]}
   &\lesssim_{\gamma,\beta} h^{\gamma}(1\vee \|\Gamma\|_{\gamma,\alpha}^3)\fG \left( \|Y-\tilde{Y}\|_{\beta;[\tau,\tau+h]}+\vertiii{\mu;\tilde{\mu}}_{\beta;[\tau,\tau+h]}\right)\\
   & \quad + h^{\gamma-\beta}\|\Gamma\|_{\gamma,\alpha}\left(\|Y-\tilde{Y}\|_{\infty;[\tau,\tau+h]}+\sup_{s\in [\tau,\tau+h]}\|\mu_s-\tilde{\mu}_s\|_{\KR}\right).
   \end{aligned}
\end{equation*}
Using the standard bound for supremum and H\"older norms, we have that 
\begin{equation*}
    \|Y-\tilde{Y}\|_{\infty;[\tau,\tau+h]} \lesssim |Y_\tau-\tilde{Y}_\tau|+[Y-\tilde{Y}]_{\beta;[\tau,\tau+h]}=\|Y-\tilde{Y}\|_{\beta;[\tau,\tau+h]}, 
\end{equation*}
with a similar bound holding for the term $\sup_{s\in [\tau,\tau+h]}\|\mu_s-\tilde{\mu}_s\|_{\KR}$. Thus, using that $\beta<\gamma$, we obtain 
\begin{equation*}
\bigg[\int \Gamma_{\dd r}\ast \mu_r(Y_r)-\Gamma_{\dd r}\ast \tilde{\mu}_r(\tilde{Y}_r)
   \bigg]_{\beta;[\tau,\tau+h]}
   \lesssim_{\gamma,\beta} h^{\gamma-\beta}(1\vee \|\Gamma\|_{\gamma,\alpha}^3)\fG \left( \|Y-\tilde{Y}\|_{\beta;[\tau,\tau+h]}+\vertiii{\mu;\tilde{\mu}}_{\beta;[\tau,\tau+h]}\right),
\end{equation*}
where we have used that $h<1$. 
Inserting this bound in \eqref{eq:YDiffTriangle1} and using that $\eta> \beta$,  it follows that there exists a constant $C$ depending on $\gamma $ and $\beta$ such that
\begin{equation}\label{eq:boundsmallh}
    [Y-\tilde{Y}]_{\beta;[\tau,\tau+h]} 
    \\
    \leq  [B-\tilde{B}]_{\beta;[\tau,\tau+h]} +Ch^{\gamma-\beta}(1\vee \|\Gamma\|_{\gamma,\alpha}^3)\fG \left( \|Y-\tilde{Y}\|_{\beta;[\tau,\tau+h]}+\vertiii{\mu;\tilde{\mu}}_{\beta;[\tau,\tau+h]}\right).
\end{equation}
Define the random variable 
\begin{equation}\label{eq:rv}
    C(\Gamma,\fG) := C(1\vee \|\Gamma\|_{\gamma,\alpha}^3)\fG
\end{equation}
Then using the fact that $\|Y-\tilde{Y}\|_{\beta;[\tau,\tau+h]} = |Y_\tau-\tilde{Y}_\tau| +[Y-\tilde{Y}]_{\beta;[\tau,\tau+h]}$ we have,
\begin{equation*}
     [Y-\tilde{Y}]_{\beta;[\tau,\tau+h]} 
    \\
    \leq  [B-\tilde{B}]_{\beta;[\tau,\tau+h]} +h^{\gamma-\beta}C(\Gamma,\fG) \left( |Y_\tau-\tilde{Y}_\tau| +[Y-\tilde{Y}]_{\beta;[\tau,\tau+h]}+\vertiii{\mu;\tilde{\mu}}_{\beta;[\tau,\tau+h]}\right).
\end{equation*}
Choosing $h=\bar{h}$ specifically such that,
\begin{equation*}
    \bar{h} = \left(\frac{1}{4C(\Gamma,\fG) } \right)^{\frac{1}{\gamma-\beta}}\wedge 1\wedge T
\end{equation*}
it is readily seen that for $\tau \in [0,T-\bar{h}]$,
\begin{equation}\label{eq:small time contraction}
     [Y-\tilde{Y}]_{\beta;[\tau,\tau+\bar{h}]} 
    \\
    \leq  \frac{4}{3}[B-\tilde{B}]_{\beta;T} +\frac{1}{3}|Y_\tau-\tilde{Y}_\tau| +\frac{1}{3}\vertiii{\mu;\tilde{\mu}}_{\beta;T}.
\end{equation}
where we have used that $\|\cdot\|_{\beta;[\tau,\tau+\bar{h}]}\leq \|\cdot\|_{\beta;T}$ for any $[\tau,\tau+\bar{h}]\subset[0,T]$. 
Note that if $\bar{h}=T$, it would follow from this inequality that $\mu\mapsto Y$ is a contraction on $[0,T]$. As $\bar{h}$ is a random variable we must also investigate the case when $\bar{h}<T$, combining with certain iteration techniques to obtain a global bound. 
We use the identity $|X_{\tau}| \leq |X_{\tau-\bar{h}}|+ \bar{h}^\beta[X]_{\beta;[\tau-\bar{h},\tau]}$ to find that,
\begin{align*}
    |Y_{\tau}-\tilde{Y}_{\tau}| &\leq |Y_{\tau-\bar{h}}-\tilde{Y}_{\tau-\bar{h}}|+ \bar{h}^\beta[Y-\tilde{Y}]_{\beta;[\tau-\bar{h},\tau]}
    \\
    &\leq (1+\frac{\bar{h}^\beta}{3})|Y_{\tau-\bar{h}}-\tilde{Y}_{\tau-\bar{h}}| + \bar{h}^\beta \left( \frac{4}{3}[B-\tilde{B}]_{\beta;T}  +\frac{1}{3}\vertiii{\mu;\tilde{\mu}}_{\beta;T}\right)
\end{align*}
Iterating this inequality, for any $\tau\in [0,T-\bar{h}]$ one need at most to iterate $\lceil \frac{T}{\bar{h}} \rceil$ times, and thus we obtain the bound
\begin{equation*}
    |Y_{\tau}-\tilde{Y}_{\tau}| \leq    \left(1+\frac{\bar{h}^\beta}{3}\right)^{\lceil\frac{T}{\bar{h}}\rceil}|\xi-\tilde{\xi}|+ \left(1+\frac{\bar{h}^\beta}{3}\right)^{\lceil\frac{T}{\bar{h}}\rceil-1}\bar{h}^\beta \left( \frac{4}{3}[B-\tilde{B}]_{\beta;T}  +\frac{1}{3}\vertiii{\mu;\tilde{\mu}}_{\beta;T}\right).
\end{equation*}

Combining this bound with \eqref{eq:small time contraction} we see that 
\begin{align*}
    \|Y-\tilde{Y}\|_{\beta;[\tau,\tau+\bar{h}]}&\leq \frac{1}{3}( \left(1+\frac{\bar{h}^\beta}{3}\right)^{\lceil\frac{T}{\bar{h}}\rceil}|\xi-\tilde{\xi}|+ \left(1+\frac{\bar{h}^\beta}{3}\right)^{\lceil\frac{T}{\bar{h}}\rceil-1}\bar{h}^\beta \left( \frac{4}{3}[B-\tilde{B}]_{\beta;T}  +\frac{1}{3}\vertiii{\mu;\tilde{\mu}}_{\beta;T}\right))
    \\
    &+\frac{4}{3}[B-\tilde{B}]_{\beta;T}  +\frac{1}{3}\vertiii{\mu;\tilde{\mu}}_{\beta;T}
    \\
    &\leq2\left(1+\frac{\bar{h}^\beta}{3}\right)^{\lceil\frac{T}{\bar{h}}\rceil}\left(|\xi-\tilde{\xi}|+ [B-\tilde{B}]_{\beta;T}+\vertiii{\mu;\tilde{\mu}}_{\beta;T}\right).
\end{align*}

An application of Lemma \ref{lem:LocalToGlobalHolderBound} then reveals that 

\begin{equation*}
    	\|Y-\tilde{Y}\|_{\beta;T} \leq  2\left(1+\frac{\bar{h}^\beta}{3}\right)^{\lceil\frac{T}{\bar{h}}\rceil}\left(|\xi-\tilde{\xi}|+ [B-\tilde{B}]_{\beta;T}+\vertiii{\mu;\tilde{\mu}}_{\beta;T}\right)(1\vee 2\bar{h}^{\beta-1})T^{1-\beta}
\end{equation*}
%
%
Note that  for any $k\geq 1$ we have  $(1+\frac{\bar{h}^\beta}{3})^{k}\leq k(1+(\frac{\bar{h}^\beta}{3})^k)$, and furthermore note that $\bar{h}\leq 1$. Thus 
by definition of  $\bar{h}$ we obtain that 
\begin{equation*}
    \|Y-\tilde{Y}\|_{\beta;T} \leq  4  (1+T 4C(\Gamma,\fG))^{\frac{2-\beta}{\gamma-\beta}}  \left(|\xi-\tilde{\xi}|+ [B-\tilde{B}]_{\beta;T}+\vertiii{\mu;\tilde{\mu}}_{\beta;T}\right)T^{1-\beta},
\end{equation*}
which provides us with the desirable pathwise bound. 
Recall that due to our assumptions on $B,\,\tilde{B}$, we have $\fG\in L^{2p}(\Omega;\cC^\eta_T)$.

With the pathwise estimate in hand, we show stability in the law of $Y$.
By the same argument as used in the derivation of inequality \eqref{eq:law to expectation}, we have that
\begin{equation}\label{eq:stability law first bound}
  \vertiii{\cL(Y);\cL(\tilde{Y})}_{\beta;T}\leq \EE\left[|\xi-\tilde{\xi}| \right] + \EE\left[\|Y-\tilde{Y}\|_{\beta;T} \right].
\end{equation}
Then applying \eqref{eq:SolStabilityBound} inside the second expectation, using that $(x-\tilde{x})\perp \fG$ due to the fact that $x,\,\tilde{x}$ are independent from $(B,\,\tilde{B})$, and applying H\"older's inequality, we obtain \eqref{eq:SolLawStabilityBound}.
\end{proof}

\subsection{McKean--Vlasov Fixed Point} \label{subsec:MKVFixedPoint}
We now show that we can close the fixed point $\mu=\cL(Y^\mu)$ and in doing so obtain a solution $(Y,\mu)$ to the full abstract McKean--Vlasov problem \eqref{eq:AbstractNLYEMKV}. For $p\geq 1$ and any $\nu:=\cL(\xi,B)\in \cP_{1,p}(\bR^d\times\cC^\eta_T)$ with $B_0=0$, we define the map 
\begin{equation}\label{eq:PsiDefine}
\begin{aligned}
  \Psi(\nu,\,\cdot\,): \cP_1\left(\cC^{\eta\wedge \gamma}_T\right)&\rightarrow \cP_1\left(\cC^{\eta\wedge \gamma}_T\right)\\
  \mu &\mapsto \cL(Y^\mu):= S^\mu_T\#\cL(\xi,B),
  \end{aligned}
\end{equation}
for $S^\mu_T$ as defined in \eqref{eq:SolMapDefine}. Given this set up, we prove the following theorem.

\begin{theorem}\label{th:AbstractMKVSol}
Let $\alpha\geq 2$, $\Gamma \in \cC^\gamma_T\cC^\alpha(\bR^d)$, satisfy the assumptions of Lemma \ref{lem:MeasureSewingStability}, $\beta \in (1-\gamma,\eta \wedge \gamma)$, $p= \frac{2-\beta}{\gamma-\beta}$ and $\nu \in \cP_{1,p}(\bR^d\times \cC^\eta_T)$ be as above. Then there exists a unique $\bar{\mu} \in \cP_1(\cC^{\eta \wedge \gamma}_T)$ such that $\Psi(\nu,\bar{\mu})=\bar{\mu}$. As a result $\bar{\mu}$ solves the McKean--Vlasov problem
\begin{equation}\label{eq:McKeanVlasov fixed point}
  Y_t = \xi + \int_0^t (\Gamma_{\dd r}\ast \bar{\mu}_r)(Y_r)+ B_t,\quad \bar{\mu}=\cL(Y).
\end{equation}
\end{theorem}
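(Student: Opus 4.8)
The plan is to obtain $\bar\mu$ by a Banach fixed point argument, but carried out at the level of measure \emph{flows} rather than path measures. The point is that the frozen-flow solution $S^\mu_T(x,B)$ of \eqref{eq:RDEFixedMeasure} depends on $\mu$ only through its time-marginals $t\mapsto\pi_t\#\mu$, and that the stability estimate \eqref{eq:SolLawStabilityBound} produces a contraction precisely in the metric $\vertiii{\,\cdot\,;\,\cdot\,}_{\beta;T}$, with respect to which $\cC^\beta_T\cP_1(\bR^d)$ is complete by Lemma \ref{lem:HolderFlowsComplete}. Fix $\beta\in(1-\gamma,\eta\wedge\gamma)$. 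The construction of Theorem \ref{th:RDEWellPosedFixedMeasure} and the estimates of Lemmas \ref{lem:SolGrowthBounds} and \ref{lem:SolStabilityBounds} use the frozen measure only through its H\"older flow, so they extend verbatim to any $m\in\cC^\beta_T\cP_1(\bR^d)$, yielding $Y^m\in L^1(\Omega;\cC^{\eta\wedge\gamma}_T)$; I would then set $\Phi(m):=\pi_\cdot\#\cL(Y^m)$. By Theorem \ref{th:HolderFlowEmbedding} and the growth bound \eqref{eq:SolLawGrowthBound}, $\Phi(m)\in\cC^\beta_T\cP_1(\bR^d)$, $\Phi(m)_0=\cL(x)$, and $[\Phi(m)]_{\beta;T}\lesssim T^\theta(1+[m]_{\beta;T}+\EE[[B]_{\eta;T}])(1\vee\|\Gamma\|_{\gamma,\alpha})$. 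Fixed points of $\Phi$ correspond to fixed points of $\Psi(\nu,\,\cdot\,)$: from $\Phi(\bar m)=\bar m$ one sets $\bar\mu:=\cL(Y^{\bar m})$, whence $\pi_t\#\bar\mu=\bar m_t$ for all $t$, so $S^{\bar\mu}_T(x,B)=Y^{\bar m}$ and $\Psi(\nu,\bar\mu)=\cL(Y^{\bar\mu})=\bar\mu$; conversely the flow of any path-measure fixed point solves $\Phi(m)=m$.

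First I would run the argument on a short interval $[0,T_0]$, $T_0\leq1$, and then extend. Set $M:=2CT_0^\theta(1+\EE[[B]_{\eta;T_0}])(1\vee\|\Gamma\|_{\gamma,\alpha})$ and
\[
  \cS_M:=\big\{m\in\cC^{\beta}_{T_0}\cP_1(\bR^d)\ :\ m_0=\cL(x),\ [m]_{\beta;T_0}\leq M\big\}.
\]
This set is non-empty (the constant flow $t\mapsto\cL(x)$ lies in it) and $\vertiii{\,\cdot\,;\,\cdot\,}_{\beta;T_0}$-closed, hence complete. If $T_0$ is chosen so that $CT_0^\theta(1\vee\|\Gamma\|_{\gamma,\alpha})\leq\tfrac12$, the growth bound above gives $\Phi(\cS_M)\subseteq\cS_M$. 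For the contraction I would apply Lemma \ref{lem:SolStabilityBounds} with \emph{identical} data $x=\tilde x$, $B=\tilde B$, so that the terms in \eqref{eq:SolLawStabilityBound} carrying $\EE[|x-\tilde x|]$ and $\EE[[B-\tilde B]^2_{\eta;T_0}]$ vanish, leaving
\[
  \vertiii{\Phi(m);\Phi(\tilde m)}_{\beta;T_0}\ \lesssim_{\gamma,\eta,\beta}\ T_0^{\sigma}(1\vee\|\Gamma\|^3_{\gamma,\alpha})\,\EE[\fG^{1+p}]\,\vertiii{m;\tilde m}_{\beta;T_0},\qquad \fG=1+M+[B]_{\eta;T_0}.
\]
Since $p\geq\frac{1-\beta}{\gamma-\beta}>1$ (as $\gamma<1$) we have $1+p\leq2p$, and since $\cL(B)\in\cP_{2p}(\cC^\eta_T)$ and $M\leq M_0:=2C(1+\EE[\|B\|_{\cC^\eta_T}])(1\vee\|\Gamma\|_{\gamma,\alpha})$ for every $T_0\leq1$, the quantity $\EE[\fG^{1+p}]$ is bounded by a constant $K$ depending only on $\gamma,\eta,\beta,\|\Gamma\|_{\gamma,\alpha}$ and $\EE[\|B\|_{\cC^\eta_T}]$, in particular not on $T_0$. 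Shrinking $T_0$ further so that $T_0^{\sigma}(1\vee\|\Gamma\|^3_{\gamma,\alpha})K$ times the implicit constant is $<1$, Banach's theorem gives a unique $\bar m\in\cS_M$ with $\Phi(\bar m)=\bar m$; and since \eqref{eq:SolLawGrowthBound} applied to any flow fixed point forces it into $\cS_M$, this $\bar m$ is the unique fixed point of $\Phi$ in all of $\cC^\beta_{T_0}\cP_1(\bR^d)$. Setting $\bar\mu:=\cL(Y^{\bar m})\in\cP_1(\cC^{\eta\wedge\gamma}_{T_0})$, the correspondence above shows $\bar\mu$ is the unique fixed point of $\Psi(\nu,\,\cdot\,)$ on $[0,T_0]$ and that $(Y^{\bar\mu},\bar\mu)$ solves \eqref{eq:McKeanVlasov fixed point} there.

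To reach $[0,T]$ I would iterate. The time $T_0$ above depends only on $\gamma,\eta,\beta,\|\Gamma\|_{\gamma,\alpha}$ and $\EE[\|B\|_{\cC^\eta_T}]$ — crucially not on $\cL(x)$ — so the same construction runs on each $[kT_0,(k+1)T_0]$ with restarting datum $Y^{\bar\mu}_{kT_0}\in L^1(\Omega;\bR^d)$ and shifted noise $B_{kT_0+\cdot}-B_{kT_0}$. One then glues the pieces: the concatenated flow stays $\beta$-H\"older across the junctions, the non-linear Young integral of Lemma \ref{lem:YoungIntegration} is additive over adjacent subintervals, and, exactly as in the proof of Theorem \ref{th:RDEWellPosedFixedMeasure}, the glued pair solves \eqref{eq:McKeanVlasov fixed point} on $[0,T]$, with uniqueness propagating interval by interval.

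The step I expect to be the main obstacle is the contraction: to make it self-contained one must guarantee that the pre-factor $\EE[\fG^{1+p}]$ on the right of \eqref{eq:SolLawStabilityBound} does not deteriorate as $T_0\downarrow0$, otherwise the radius $M$ and the time $T_0$ chase one another. This is precisely why Lemma \ref{lem:SolStabilityBounds} was arranged so that its pre-factors depend only on the H\"older \emph{semi-norm} $[\mu]_{\beta;T}$ (uniformly $\leq M\leq M_0$ on $\cS_M$) and on moments of $B$, and why the hypotheses $p\geq\frac{1-\beta}{\gamma-\beta}$ (hence $1+p\leq2p$) and $\cL(B)\in\cP_{2p}(\cC^\eta_T)$ are imposed. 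Once that uniform bound is secured, everything else is a routine small-time Banach argument followed by concatenation.
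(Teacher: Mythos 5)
Your proof is correct and follows essentially the same route as the paper: a small-time Banach fixed point whose invariance comes from the growth bound \eqref{eq:SolLawGrowthBound} and whose contraction comes from applying Lemma \ref{lem:SolStabilityBounds} with identical data $(x,B)$ so that only the $\vertiii{\mu;\tilde{\mu}}_{\beta;T}$ term survives, followed by iteration over subintervals of a length chosen independently of the initial law. The only (harmless, arguably tidier) difference is that you run the contraction in the complete flow space $(\cC^\beta_T\cP_1(\bR^d),\vertiii{\,\cdot\,;\,\cdot\,}_{\beta;T})$ of Lemma \ref{lem:HolderFlowsComplete} and then transfer back to path measures via $\bar{\mu}=\cL(Y^{\bar{m}})$, whereas the paper works directly on the set $\fB_t\subset\cP_1(\cC^\beta_T)$ equipped with the marginal-flow seminorm.
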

\begin{proof}
For any $t \in [0,T]$, $\nu \in \cP_{1,p}(\bR^d\times \cC^\eta_T)$ we define the set
\begin{equation*}
  \fB_t := \left\{ \mu \in \cP_1(\cC^\beta_T)|\,\mu_{0} = \nu|_{\bR^d},\,[\mu]_{\beta;t}\leq 1 \right\}
\end{equation*}
which is a complete metric space under $[\,\cdot\,]_{\beta;t}$ (due to the fact that all elements start in $\mu_{0} = \nu|_{\bR^d}$). We first show that there exists a $T_0\in [0,T]$ such that $\Psi(\nu,\,\cdot\,)$ leaves $\fB_{T_0}$ invariant. We let $(\xi,B)\sim \nu$ and $Y= S^\mu_T(\xi,B)$. From Lemma \ref{lem:SolGrowthBounds}, specifically \eqref{eq:SolLawGrowthBound} we have
\begin{equation*}
 [\cL(Y)]_{\beta;t} \leq Ct^\theta\left(1+\EE\left[[B]_{\eta;T}\right]\right)\left(1\vee \|\Gamma\|^2_{\gamma,\alpha}\right),
\end{equation*}
for some $\theta:=\theta(\gamma,\eta,\beta)>0$ and where we used that $\mu\in \fB_t$ ensures $[\mu]_{\beta;t}\leq 1$. So choosing $T_0>0$ sufficiently small we see that $[\cL(Y)]_{\beta;T_0} \leq 1$. Furthermore, it is immediate from the proof of Theorem \ref{th:RDEWellPosedFixedMeasure} that $\cL(Y_0) =\nu |_{\bR^d}$ so we conclude that $\Psi(\nu,\fB_{T_0})\subseteq \fB_{T_0}$.\\ \par 
Now we show that there exists some $T_1 \in (0,T_0]$ such that $\Psi(\nu,\,\cdot\,)$ is a contraction on $\fB_{T_1}$. Let $\mu^1,\,\mu^2 \in \fB_{T_0}$ and $Y^1=S^{\mu^1}(\xi,B),\,Y^2=S^{\mu^2}(\xi,B)$ be distinct. Then, following the exact same procedure as leading to \eqref{eq:small time contraction}  in Lemma \ref{lem:SolStabilityBounds} and combining with the estimate in \eqref{eq:stability law first bound} using $\fG := (2+[B]_{\eta;T})$ (note that now $Y^1$ and $Y^2$ both starts in $\xi$ with same random noise $B$, and so the independence condition $(\xi-\xi)\perp (B,B)$ of Lemma \ref{lem:SolStabilityBounds} is trivially satisfied), we have that for all $t\in (0,T_0]$,
\begin{align*}
   \vertiii{\cL(Y);\cL(\tilde{Y})}_{\beta;t} &\lesssim_{\gamma,\eta,\beta}\, t^{\gamma-\beta} \EE\left[C(\Gamma,\fG)^{p}\right] \vertiii{\mu;\tilde{\mu}}_{\beta;t},
\end{align*}
where $C(\Gamma,\fG)$ is defined as in \eqref{eq:rv}. 
Since we assume $B$ is $p$-integrable we can choose $t:=T_1\in (0,T_0]$ sufficiently small to obtain that $\Psi(\nu,\,\cdot\,)$ is a contraction on $\fB_{T_1}$. Applying the Banach fixed point theorem it follows that there exists a unique fixed point $
\bar{\mu} \in \fB_{T_1}$ such that $\Psi(\nu,\bar{\mu})=\bar{\mu}$ on $[0,T_1]$. \\ \par 
We now show that we can extend this solution to the whole interval $[0,T]$. Note that both $T_0$ and $T_1$ were chosen independently of $\nu|_{\bR^d}$, the initial distribution.
From Lemma \ref{lem:SolGrowthBounds} we have that $\bar{\mu}_{T_1} \in \cP_1(\bR^d)$ and so we can define a new family of sets in $\cP_1(\cC^{\beta})$ by setting
\begin{equation*}
  \fB^1_{[T_1,T_1+t]}:= \left\{\mu \in \cP_1\left(\cC^\beta_{[T_1,T_1+T]}\right)\,:\,\mu_{T_1} = \bar{\mu}_{T_1},\,[\mu]_{\beta;[T_1,T_1+t]}\leq 1 \right\},
\end{equation*}
for any $t\in [0,T-T_1]$. We may then repeat the same argument as above, now considering the solution map $\Psi(\bar{\mu}_{T_1} \otimes \nu|_{\cC^\eta_T},\mu):= S_{[T_1,T_1+T]}^\mu(\cL(\bar{\mu}_{T_1},B))$. 
Since $T_1$ was chosen independently of the initial distribution, by the same arguments we obtain a new fixed point $\bar{\mu}\in \fB^1_{[T_1,2T_1]}$. Note that
\begin{equation*}
  \cL(Y^{\bar{\mu}_{2T_1}})=S_{[T_1,2T_1]}^{\bar{\mu}_{2T_1}}\#\bar{\mu}_{T_1}=S^{\bar{\mu}_{2T_2}}_{[T_1,2T_2]}\#S_{[0,T_1]}^{\mu_{T_1}}\#\nu,
\end{equation*}
and thus there exists a unique solution to \eqref{eq:McKeanVlasov fixed point} on $[0,2T_2]$. This procedure can be iterated to any interval $[kT_1,(k+1)T_1\wedge T]\subset [0,T]$, and so we conclude that there exists a unique solution to \eqref{eq:McKeanVlasov fixed point} on $[0,T]$. \\ \par
Finally, using Theorem \ref{th:RDEWellPosedFixedMeasure} we see that $Y^{\bar{\mu}}\in \cC^{\eta \wedge \gamma}_T$ and so from Lemma \ref{lem:SolGrowthBounds} and Remark \ref{rem:SolMomentBounds} we have that $\bar{\mu}\in \cP_1(\cC^{\eta \wedge \gamma}_T)$. This concludes the proof. 
\end{proof}
We now define the fixed point map
\begin{equation}\label{eq:PsiBarDefine}
	\begin{aligned}
		\bar{\Psi}: \cP_{1,2p}(\bR^d\times \cC^\eta_T)  &\rightarrow \cP_1\left(\cC^{\eta\wedge \gamma}_T\right)\\
		\nu &\mapsto \bar{\mu} := \bar{\Psi}(\nu,\bar{\mu}), 
	\end{aligned}
\end{equation}
for $p\geq 1$, where $\bar{\mu} = \Psi(\nu,\bar{\nu})$, with $\Psi$ defined by \eqref{eq:PsiDefine}.
\subsection{Stability of the Fixed Point Law}
In this section we investigate the stability of the solution to \eqref{eq:McKeanVlasov fixed point} with respect to the joint law of the initial data and the driving noise.\\ \par 
First we introduce some notation. Given two probability spaces, $(\Omega^1,\cF^1,\PP^1),\,(\Omega^2,\cF^2,\PP^2)$ we write $\EE^1$ (resp. $\EE^2$) for the expectation over $\Omega^1$ (resp. $\Omega^2$) with respect to $\PP^1$ (resp. $\PP^2$) and $\EE^{1,2}$ for the expectation over $\Omega^1\times \Omega^2$ with respect to $\PP^1\times \PP^2$.
\begin{theorem}\label{th:AbstractMKVSolStable}
  Let $\alpha\geq 2$, $\Gamma \in \cC^\gamma_T\cC^\alpha(\bR^d)$, satisfy the assumptions of Lemma \ref{lem:MeasureSewingStability}, $\beta \in (1-\gamma,\eta \wedge \gamma)$, $p= \frac{2-\beta}{\gamma-\beta}$ and two possibly different probability spaces $(\Omega^1,\cF^1,\PP^1),\,(\Omega^2,\cF^2,\PP^2)$. Assume that we are given $(\xi^1,B^1) \in L^1(\Omega^1;\bR^d)\times L^{2p}(\Omega^1;\cC^\eta_T)$ and $(\xi^2,B^2) \in L^1(\Omega^2;\bR^d)\times L^{2p}(\Omega^2;\cC^\eta_T)$
  such that $B^1_0=B^2_0=0$ and $(\xi^1-\xi^2)\perp (B^1,B^2)$, with $\nu^i= \cL(\xi^i,B^i) \in \cP_{1,2p}(\bR^d\times\cC^{\eta}_T)$ for $i=1,2$, and letting $\mu^1,\mu^2 \in \cP_1(\cC^{\eta\wedge \gamma}_T)$ be the associated solutions to \eqref{eq:AbstractNLYEMKV}, then there exists a constant $C:=C\left(\EE^{1}\left[[B^1]^{2p}_{\eta;T}\right]\vee \EE^{2}\left[[B^2]^{2p}_{\eta;T}\right],\Gamma,\gamma,\eta,\beta\right)>0$ 
  such that
  \begin{equation}\label{eq:LawFixedPointGlobalStable}
    \cW_{1;\cC^\beta_{T}}(\mu^1,\mu^2) \leq C\left( \cW_{1;\bR^d}(\cL(\xi^1),\cL(\xi^2))+\cW_{2; \cC^\eta_T}(\cL(B^1),\cL(B^2))\right).
   \end{equation}
\end{theorem}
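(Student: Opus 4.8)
The plan is to view \eqref{eq:LawFixedPointGlobalStable} as Lipschitz continuity of the fixed–point map $\bar\Psi$ of \eqref{eq:PsiBarDefine}, obtained by combining the contraction-in-$\mu$ property of $\Psi(\nu,\,\cdot\,)$ (established inside the proof of Theorem~\ref{th:AbstractMKVSol}) with the stability-in-data estimate of Lemma~\ref{lem:SolStabilityBounds}, and then patching the resulting short–time bound along a partition of $[0,T]$. First I would fix a coupling $m\in\Pi(\nu^1,\nu^2)$ on the product space $\Omega^1\times\Omega^2$ — chosen, as in Lemma~\ref{lem:SolStabilityBounds}, so that the $\bR^d$–marginals are coupled (near–)optimally for $\cW_{1;\bR^d}$, the path–marginals (near–)optimally for $\cW_{2;\cC^\eta_T}$, and the $\bR^d$– and path–components decoupled appropriately — and realise on this space the two fixed–point solutions $Y^i=S^{\mu^i}_T(x^i,B^i)$ to \eqref{eq:AbstractNLYEMKV}, where $\mu^i=\bar\Psi(\nu^i)$. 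Since $Y^i\in\cC^{\eta\wedge\gamma}_T\subset\cC^\beta_T$ and $\cW_{1;\cC^\beta_T}(\mu^1,\mu^2)=\|\mu^1-\mu^2\|_{\lip^*(\cC^\beta_T)}\le\EE^m\big[\|Y^1-Y^2\|_{\cC^\beta_T}\big]$ (Theorem~\ref{th:KRDuality} and the definition of $\cW_1$ as an infimum over couplings), everything reduces to estimating this last expectation.

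On a short interval $[0,T_1]$ I would apply the pathwise bound \eqref{eq:SolStabilityBound} with $\mu=\mu^1$, $\tilde\mu=\mu^2$, using Lemma~\ref{lem:SolGrowthBounds} (as in the proof of Theorem~\ref{th:AbstractMKVSol}, where the relevant time–scales are chosen independently of the initial datum) to bound the Hölder seminorms $[\mu^1]_{\beta;T}\vee[\mu^2]_{\beta;T}$ — hence the quantity $M$, hence $\fG$ — by a constant depending only on $\|\Gamma\|_{\gamma,\alpha}$, $T$, $\gamma,\eta,\beta$ and $\EE^{1,2}\big[[B^i]^{2p}_{\eta;T}\big]$. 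Taking $\EE^m$, the term carrying $\vertiii{\mu^1;\mu^2}_{\beta;T_1}\le\cW_{1;\cC^\beta_{T_1}}(\mu^1,\mu^2)$ (Theorem~\ref{th:HolderFlowEmbedding}) comes with prefactor $T_1^{\sigma}(1\vee\|\Gamma\|^3_{\gamma,\alpha})\EE^m[\fG^{1+p}]$; choosing $T_1$ small enough (depending only on the listed quantities) that this is at most $\tfrac12$ lets me absorb it into the left–hand side, leaving
\[
  \cW_{1;\cC^\beta_{T_1}}(\mu^1,\mu^2)\ \le\ C_1\Big(\cW_{1;\bR^d}(\cL(x^1),\cL(x^2))+\cW_{2;\cC^\eta_T}(\cL(B^1),\cL(B^2))\Big),
\]
where the remaining expectations $\EE^m\big[\fG^{p}|x^1-x^2|\big]$ and $\EE^m\big[\fG^{2p}[B^1-B^2]_{\eta;T}\big]$ are handled by Hölder's inequality and the choice of $m$ (legitimate by the moment hypotheses $B^i\in L^{2p}$), and $C_1$ depends only on $\|\Gamma\|_{\gamma,\alpha}$, $\gamma,\eta,\beta$ and $\EE^{1,2}\big[[B^i]^{2p}_{\eta;T}\big]$.

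To reach the full interval, I would iterate. By the semigroup property recorded after Definition~\ref{def:AbstractNLYE}, the restrictions of $\mu^1,\mu^2$ to $[T_1,2T_1]$ are again fixed points of the same equation, now with initial data $\cL(Y^i_{T_1})$ and driving increments $B^i-B^i_{T_1}$; Lemma~\ref{lem:WassersteinTaylor} gives $\cW_{1;\bR^d}(\mu^1_{T_1},\mu^2_{T_1})\le\cW_{1;\bR^d}(\cL(x^1),\cL(x^2))+T_1^\beta\,\cW_{1;\cC^\beta_{T_1}}(\mu^1,\mu^2)$, while the increment noises satisfy $\cW_{2;\cC^\eta_{[T_1,2T_1]}}\big(\cL(B^1-B^1_{T_1}),\cL(B^2-B^2_{T_1})\big)\le\cW_{2;\cC^\eta_T}(\cL(B^1),\cL(B^2))$ since $[B^1-B^2]_{\eta;[T_1,2T_1]}\le[B^1-B^2]_{\eta;T}$. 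Since $T_1$ was chosen independently of the initial distribution, the short–time estimate applies verbatim on each of the $\lceil T/T_1\rceil$ subintervals; summing the resulting (geometric–type) recursion, and passing from the subinterval $\cC^\beta$–Wasserstein bounds to the global one by the local–to–global patching of Lemma~\ref{lem:LocalToGlobalHolderBound} applied to $t\mapsto\mu^1_t-\mu^2_t$ via Kantorovich–Rubinstein duality, produces \eqref{eq:LawFixedPointGlobalStable} with a constant depending on $T$ only through $\lceil T/T_1\rceil$ and otherwise on the advertised data.

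The main obstacle is the same bootstrap that drives the existence proof: the term one is estimating, $\vertiii{\mu^1;\mu^2}$, reappears on the right of \eqref{eq:SolStabilityBound}, so one must keep its prefactor strictly below $1$, which is only possible after localising in time — and this forces the patching argument together with the careful propagation of the initial–distribution distance from one interval to the next through Lemma~\ref{lem:WassersteinTaylor}. A secondary technical point is the choice of the coupling $m\in\Pi(\nu^1,\nu^2)$: it must simultaneously (near–)realise $\cW_{1;\bR^d}$ on the $\bR^d$–component and $\cW_{2;\cC^\eta_T}$ on the path–component while keeping the $\fG$–weighted moments $\EE^m[\fG^{q}]$ controlled by the prescribed moments of $B^1,B^2$ (these being marginal quantities, independent of the coupling); gluing the two optimal marginal couplings as in Lemma~\ref{lem:SolStabilityBounds}, with Hölder's inequality absorbing the $\fG$ factors, achieves this.
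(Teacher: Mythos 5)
Your proposal is correct and follows essentially the same route as the paper's proof: couple the data via the product of the optimal marginal plans, bound $\cW_{1;\cC^\beta}$ by $\EE_{\bar m}\|Y^1-Y^2\|_{\cC^\beta}$, control $[\mu^i]_{\beta}$ (hence $\fG$) via Lemma \ref{lem:SolGrowthBounds}, apply Lemma \ref{lem:SolStabilityBounds} together with Theorem \ref{th:HolderFlowEmbedding}, absorb the $\cW_{1;\cC^\beta_{T_1}}$ term by choosing $T_1$ small independently of the initial law, and iterate over subintervals using Lemma \ref{lem:WassersteinTaylor} and a local-to-global patching in the spirit of Lemma \ref{lem:LocalToGlobalHolderBound}. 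No gaps beyond those present in the paper's own argument.
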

\begin{proof}
 We let $\mu^i = \bar{\Psi}^i(\nu^i)$, where $\bar{\Psi}$ is defined by \eqref{eq:PsiBarDefine}. By Point \ref{it:WassMinimiser} of Proposition \ref{prop:PortmanteauTheorem} there exist optimal transport plans $(m_0,m)\in \Pi(\nu^1|_{\bR^d},\nu^2|_{\bR^d})\times \Pi(\nu^1|_{\cC^\eta_T},\nu^2|_{\cC^{\eta}_T})$ such that
 \begin{align*}
   \cW_{1;\bR^d}(\nu^1|_{\bR^d},\nu^2|_{\bR^d})+ \cW_{2;\cC^\eta_T}(\nu^1|_{\cC^\eta_T},\nu^2|_{\cC^{\eta}_T})=\EE_{m_0}\left[|\xi^1-\xi^2|\right] +\EE_{m}\left[ [B^1-B^2]^2_{\eta;T}\right]^{\frac{1}{2}}.
 \end{align*}
Defining
$$\bar{m}:= m_0 \otimes m \in \cP_1\left((\bR^d \times \bR^d)\times(\cC^\eta_T\times \cC^\eta_T) \right),$$
using the definition of $1$-Wasserstein distance on $\cC^\beta_t$, and since $\cP_1(\cC^\eta_t)\subset\cP_1(\cC^\beta_t)$, we have that
 \begin{equation}\label{eq:LawWassersteinDistance1}
   \cW_{1;\cC^{\beta}_t}(\cL(Y^1),\cL(Y^2)) \leq \EE_{\bar{m}}\left[\|Y^1-Y^2\|_{\beta;t} \right], \quad \text{ for any } t \in [0,T]
 \end{equation}
 Using Lemma \ref{lem:SolGrowthBounds}, specifically \eqref{eq:SolLawGrowthBound}, for $i=1,2$, any $t\in (0,T]$, and some $\theta:=\theta(\gamma,\eta,\beta)>0$, we have that
 \begin{equation*}
 	[\mu^i]_{\beta;t} \leq Ct^\theta\left(1+ [\mu^i]_{\beta;t}+\EE^i\left[[B^i]_{\eta;T}\right]\right)\left(1\vee \|\Gamma\|^2_{\gamma,\alpha}\right).
 \end{equation*}
 So now, choosing $t=T_0\in (0,T]$ defined by
 \begin{equation*}
   T_0:= \left(\frac{1}{2C\left(1\vee \|\Gamma\|_{\gamma,\alpha}\right)}\right)^{\frac{1}{\theta}},
 \end{equation*}
 we see that, for a new constant $C:=C(\Gamma,\gamma,\beta)>0$
 \begin{equation}\label{eq:LawFixedPointGrowth}
   [\mu^i]_{\beta;T_0} \leq C\left(1+\EE^i\left[\left[B^i\right]_{\eta;T}\right]\right),\quad \text{for } i =1,2.
 \end{equation}
 We now define the strictly positive random variable on $(\Omega^1,\cF^1,\PP^1)\times (\Omega^2,\cF^2,\PP^2)$,
\begin{equation*}
	\fG:= \left(1+\EE^1\left[\left[B^1\right]_{\eta;T}\right] \vee\EE^2\left[\left[B^2\right]_{\eta;T}\right]+[B^1]_{\beta;T}\vee[B^2]_{\beta;T}\right)\geq 1.
\end{equation*}
Note in particular that both $\fG$ and $T_0$ are independent of initial data $\xi^1,\,\xi^2$. So, using that $(\xi^1-\xi^2)\perp_m \fG$, and applying \eqref{eq:SolStabilityBound} of Lemma \ref{lem:SolStabilityBounds}, for any $t\in [0,T_0]$, we have that
\begin{align*}
	 \EE_{\bar{m}}\left[		\|Y^1-Y^2\|_{\beta;t}\right] \lesssim_{\gamma,\eta,\beta,\alpha, \Gamma} \,&\, t^{2-\beta} \EE_{m}\left[\fG^{p}\right] \vertiii{\mu^1;\mu^2}_{\beta;t}
 +T^{2-\beta}\EE_{m}\left[\fG^{2p}\right]^{\frac{1}{2}} \EE_{m}\left[	 [B^1-B^2]^2_{\eta;T}\right]^{\frac{1}{2}}\\
	 &+ \EE_{m}\left[\fG^{p}\right] \EE_{m_0}\left[	|\xi^1-\xi^2|\right].
\end{align*}
Note that $\EE_m$ denotes an integration over the product space $\Omega^1\times \Omega^2$. From Theorem \ref{th:HolderFlowEmbedding}, we have that
 \begin{align*}
  \vertiii{\mu^1;\mu^2\,}_{\beta;t}\leq \cW_{1;\cC^\beta_t}(\mu^1,\mu^2),
 \end{align*}
so in turn
 \begin{align*}
   \EE_{\bar{m}}\left[\|Y^1-Y^2\|_{\beta;t}\right] &\lesssim_{\gamma,\eta,\beta,\alpha,\Gamma} t^{2-\beta} \EE_{m}\left[\fG^{p}\right] \cW_{1;\cC^\beta_t}(\mu^1,\mu^2)
 +T^{2-\beta} \EE_{m}\left[\fG^{2p}\right]^{\frac{1}{2}} \EE_{m}\left[	 [B^1-B^2]^2_{\eta;T}\right]^{\frac{1}{2}}\\
   &\hspace{4em} + \EE_{m}\left[\fG^{p}\right] \EE_{m_0}\left[	|\xi^1-\xi^2|\right] ,
 \end{align*}
 where $\sigma =\sigma(\gamma,\eta,\beta)>0$ is the same as in Lemma \ref{lem:SolStabilityBounds}. Now choose $t=T_1 \in (0, 1\wedge T_0)$ according to
\begin{equation*}
  T_1 := \left(\frac{1}{2C(1\vee\|\Gamma\|^4_{\gamma,\alpha}) \EE_{m}\left[\fG^{p}\right]}\right)^{\frac{1}{2-\beta}}\wedge 1\wedge T_0,
\end{equation*}
where $C:=C(\gamma,\eta,\beta)>0$ is the proportionality constant above. So then using \eqref{eq:LawWassersteinDistance1}, we have, for a new constant $C:=C(T,\Gamma,\gamma,\eta,\beta)>0$,
\begin{align*}
  \cW_{1;\cC^\beta_{T_1}}(\mu^1,\mu^2) &\leq C \left( \EE_{m}\left[\fG^{2p}\right]^{\frac{1}{2}} \EE_{m}\left[	 [B^1-B^2]^2_{\eta;T}\right]^{\frac{1}{2}}  + \EE_{m}\left[\fG^{p}\right] \EE_{m_0}\left[	|\xi^1-\xi^2|\right]\right).
\end{align*}
So then we chose $\bar{m}= m_0 \otimes m$ to be the optimal transport for $\nu^1,\,\nu^2$, we have that
\begin{equation}\label{eq:FixedPointLawStable1}
	\begin{aligned}
		 \cW_{1;\cC^\beta_{T_1}}(\mu^1,\mu^2) &\leq C \EE_{m}\left[\fG^{2p}\right]^{\frac{1}{2}}\bigg( \cW_{1;\bR^d}(\nu^1|_{\bR^d},\nu^2|_{\bR^d})+\cW_{2; \cC^\eta_T}(\nu^1|_{\cC^\eta_T},\nu^2|_{\cC^\eta_T})\bigg),
	\end{aligned}
\end{equation}
where we used the ordering of moments for the second expectation. Since $T_0,\,T_1$ were chosen independently of $\nu^1|_{\bR^d}$ and $\nu^2|_{\bR^d}$, we can iterate this procedure to find that on the interval $[T_1,2T_1]$, where now $\nu^1:=\cL(Y_{T_1}^1,B^1)\in \cP_{1,2p}(\bR^d\times \cC^\eta_T)$ and $\nu^2:=\cL(Y_{T_1}^2,B^2)\in \cP_{1,2p}(\bR^d\times \cC^\eta_T)$ we have
\begin{equation}\label{eq:stability over small time}
\cW_{1;\cC^\beta_{[T_1,2T_1]}}(\mu^1,\mu^2) \leq C\EE_{m}\left[\fG^{2p}\right]^{\frac{1}{2}} \left( \cW_{1;\bR^d}(\cL(Y^1_{T_1}),\cL(Y^2_{T_1}))+\cW_{2; \cC^\eta_T}(\nu^1|_{\cC^\eta_T},\nu^2|_{\cC^\eta_2})\right).
\end{equation}
From Lemma \ref{lem:WassersteinTaylor} we have that 
\begin{equation*}
  \cW_{1;\bR^d}(\cL(Y^1_{T_1}),\cL(Y^2_{T_1}))\leq \cW_{1;\bR^d}(\cL(\xi^1),\cL(\xi^2))+T_1^\beta\cW_{1;\cC^{\beta}_{T_1}}(\mu^1,\mu^2) 
\end{equation*}
 so inserting \eqref{eq:FixedPointLawStable1} in the above inequality yields that, for a new constant $C:=C(T,T_1,\Gamma,\gamma,\eta,\beta)$, we have
\begin{equation*}
   \cW_{1;\cC^\beta_{[T_1,2T_1]}}(\mu^1,\mu^2) \leq C \EE_m[\fG^{2p}]\left( \cW_{1;\bR^d}(\cL(\xi^1),\cL(\xi^2))+\cW_{2; \cC^\eta_T}(\nu^1|_{\cC^\eta_T},\nu^2|_{\cC^\eta_2})\right). 
\end{equation*}
We can repeat this procedure for any interval $[kT_1,(k+1)T_1]\subset [0,T]$, to give,
\begin{equation*}
   \cW_{1;\cC^\beta_{[kT_1,(k+1)T_1]}}(\mu^1,\mu^2) \leq C \EE_m[\fG^{2p}]^{\frac{k+1}{2}}\left( \cW_{1;\bR^d}(\cL(\xi^1),\cL(\xi^2))+\cW_{2; \cC^\eta_T}(\nu^1|_{\cC^\eta_T},\nu^2|_{\cC^\eta_2})\right). 
\end{equation*}
Since there are only finitely many intervals of this kind inside $[0,T]$ this estimate can be made uniform in $k$ and using the continuity of the measure flow, also to any sub-interval of $[0,T]$ of length $T_1$. Therefore, Lemma \ref{lem:WassLocalToGlobal} implies that there exists a constant $C=C(T,T_1,\EE_m[\fG^{2p}],\Gamma,\gamma,\eta,\beta)>0$ such that 
\begin{equation*}
  \cW_{1;\cC^\beta_{T}}(\mu^1,\mu^2) \leq C\left( \cW_{1;\bR^d}(\cL(\xi^1),\cL(\xi^2))+\cW_{2; \cC^\eta_T}(\cL(B^1),\cL(B^2))\right),
\end{equation*}
which concludes the proof. 
\end{proof}
\section{Mean Field Limit of the Abstract Particle System}\label{sec:AbstractMeanFieldLimit}
We apply the results of the previous section to show convergence of the non-linear Young particle system,
\begin{equation}\label{eq:GenParticleSystem}
  Y^i_t = \xi^i + \frac{1}{N}\sum_{j=1}^N\int_0^t \ \Gamma_{\dd r}(Y^i_r-Y^j_r) + B^i_t, 
\end{equation}
for $i=1,\ldots,N$ to \eqref{eq:AbstractNLYEMKV}. As in \cite{coghi_deuschel_friz_maurelli_20} we only assume convergence in law of the idiosyncratic noise vectors $(B_t^i)_{i=1,\ldots,N}$ to some $B_t$. That is we do not require any independence or exchangeability of the vectors $(B_t^i)_{i=1,\ldots,N}$, although of course either or both could be ingredients to showing the convergence. Our approach makes use of a trick of Tanaka, \cite{tanaka_84}, which was also employed in \cite{coghi_deuschel_friz_maurelli_20}. The idea is to re-cast the mean field approximation as a stability result by a transformation of the underlying probability space.\\ \par
As in the introduction to \cite{cass_lyons_14} and Section~3 of \cite{coghi_deuschel_friz_maurelli_20}, we begin by building, for any $N\geq 1$, the probability space $(\Omega_N,\cF_N,\PP_N)$, by setting,
\begin{align*}
  \Omega_N :=\{1,\ldots,N\},\quad \cF_N:= 2^{\Omega_N},\quad\PP_N:=\frac{1}{N}\sum_{i=1}^N\delta_i.
\end{align*}
where $2^{\Omega_N}$ is the power set of $\Omega_N$ and $\delta_i$ is the Kronecker delta. So we can easily identify any $N$-tuple, $(Y^i)_{i=1,\ldots,N} \subset E^N$ with a random variable $Y^{(N)}:\Omega_N \rightarrow E$ defined such that $Y^{(N)}(i)= Y^i$. Furthermore, the law of $Y^{(N)}$ as an $E$ valued random variable, on $(\Omega_N,\cF_N,\PP_N)$ is given by the empirical measure,
\begin{equation*}
  \cL_N\left(Y^{(N)}\right) := \frac{1}{N}\sum_{i=1}^N\delta_{Y_i},
\end{equation*}
where the delta is now a Dirac mass on $E$. Using this construction we can associate to the random vectors $$(\xi^{(N)},B^{(N)}) = ((\xi^1,B^1),\ldots,(x^N,B^N))\in (\bR^d\times \cC^\eta_T)^N \text{ and } Y^{(N)}= (Y^1,\ldots,Y^N)\in (\cC^{\gamma\wedge \eta}_T)^N$$
the empirical measures $$\cL_N(\xi^{(N)},B^{(N)})\in \cP_{1}(\bR^d\times \cC^\eta_T) \text{ and } \cL_N(Y^{(N)})\in \cP_1(\cC^{\eta\wedge \gamma}_T)$$
which define their laws on $(\Omega_N,\cF_N,\PP_N)$. From this point of view we can rewrite the particle system \eqref{eq:GenParticleSystem} in the more familiar form
\begin{equation}\label{eq:EmpiricalMKV}
  Y^{(N)}_t = \xi^{(N)}_0 + \int_0^t \left(\Gamma_{\dd r}\ast \cL_N(Y^{(N)})_r\right)(Y^{(N)}_r)+B^{(N)}_t.
\end{equation}
We refer to equation \eqref{eq:EmpiricalMKV} as the empirical McKean--Vlasov problem. Now we state and prove the following theorem, which is essentially Theorem~21 of \cite{coghi_deuschel_friz_maurelli_20} modified to our setting.\\ \par 
We fix $T>0$, $(\Omega,\cF,\PP)$ an abstract probability space, $\gamma,\eta$ as in \eqref{eq:EtaGammaAssumption}, $\alpha\geq 2$ and $\Gamma \in \cC^\gamma_T\cC^\alpha(\bR^d)$ satisfying the assumptions of Lemma \ref{lem:MeasureSewingStability}. Then we have the following mean field approximation result.
\begin{theorem}\label{th:AbstractMKVMeanField}
Let $\beta \in (1-\gamma,\eta \wedge \gamma)$, $p= \frac{2-\beta}{\gamma-\beta}$ and $\nu \:= \cL(\xi,B)\in \cP_{1,2p}(\bR^d\times \cC^{\eta \wedge \gamma}_T)$ be such that $B_0=0$ and $\xi\perp B$. For any $N\in \bN$ also assume we have $\left(\xi^{(N)}_0,B^{(N)}\right)\in L^1(\Omega;\bR^{Nd})\times L^{2p}(\Omega;(\cC^\eta_T)^N)$ a family of random variables with $B^{(N)}_0=0$. Then the following statements hold:
\begin{enumerate}[label=(\roman*)]
  \item\label{it:NParticleWellPosed} For every $N \in \bN$ and $\PP$-a.a. $\omega \in \Omega$ there exists a unique solution $Y^{(N)}(\omega)\in (\cC^{\eta\wedge \gamma}_T)^N$ to the empirical McKean--Vlasov problem, \eqref{eq:EmpiricalMKV}. Furthermore the mapping $\omega \mapsto Y^{(N)}(\omega)$ is $\cF$ measurable.
  \item\label{it:MeanFieldWellPosed} There exists a $Y \in \cP_1(\cC^{\eta\wedge \gamma}_T)$ such that for $\PP$-a.a. $\omega \in \Omega$, $Y(\omega)$ solves the dynamics of \eqref{eq:McKeanVlasov fixed point}.
  \item\label{it:MeanFieldApproxWithB} There exists a constant $C:=C\left(T,\Gamma,\EE\left[[B]^{2p}_{\eta;T}\right]\vee \EE\left[\EE_N\left[[B^{(N)}]^{2p}_{\eta;T}\right]\right],\gamma,\eta,\beta\right) >0$, such that for all $N\geq 1$, $\PP$-a.s. we have the bound
  \begin{equation}\label{eq:NParticleStabillityWithB}
\begin{aligned}
     \cW_{1;\cC^\beta_{T}}\left(\cL_N(Y^{(N)})(\omega),\cL(Y)\right) \leq C\bigg(& \cW_{1;\bR^d}\bigg(\cL_N\big(\xi^{(N)}_0\big)(\omega),\cL(\xi)\bigg) \\
     &\, +\cW_{2; \cC^\eta_T}\left(\cL_N\left(B^{(N)}\right)(\omega),\cL(B)\right)\bigg).
     \end{aligned}
  \end{equation}
\end{enumerate}
\end{theorem}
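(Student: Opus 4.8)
The plan is to reduce all three claims to the abstract results of Section~\ref{sec:AbstractMKVResults} via Tanaka's reformulation of the particle system as the empirical McKean--Vlasov equation, exactly as the text sets up in \eqref{eq:EmpiricalMKV}: the $N$-tuple $(x^i,B^i)_{i=1}^N$ is read as an $\bR^d\times\cC^\eta_T$-valued random variable on the finite space $(\Omega_N,\cF_N,\PP_N)$, whose law is the empirical measure, so that \eqref{eq:GenParticleSystem} becomes an instance of \eqref{eq:AbstractNLYEMKV} on that space.

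For part~\ref{it:MeanFieldWellPosed} I would simply apply Theorem~\ref{th:AbstractMKVSol} on $(\Omega,\cF,\PP)$ to the data $\nu=\cL(x,B)\in\cP_{1,2p}(\bR^d\times\cC^\eta_T)$: the standing hypotheses $\beta\in(1-\gamma,\eta\wedge\gamma)$ and $p\geq\frac{1-\beta}{\gamma-\beta}$ are precisely those required there, giving the unique $Y\in\cC^{\eta\wedge\gamma}_T$. For part~\ref{it:NParticleWellPosed} I would fix $N$ and restrict to the $\PP$-full-measure set of $\omega$ on which the realisation $(x^{(N)}(\omega),B^{(N)}(\omega))$ is finite in $(\bR^d\times\cC^\eta_T)^N$ (possible since $(x^{(N)},B^{(N)})\in L^1\times L^{2p}$). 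Viewed as a random variable on $(\Omega_N,\cF_N,\PP_N)$ — on which $L^q$-integrability of a finite realisation is automatic — this data satisfies the hypotheses of Theorem~\ref{th:AbstractMKVSol}, which yields a unique solution of \eqref{eq:EmpiricalMKV}, i.e. the unique $N$-tuple $Y^{(N)}(\omega)\in(\cC^{\eta\wedge\gamma}_T)^N$ solving \eqref{eq:GenParticleSystem}. Measurability of $\omega\mapsto Y^{(N)}(\omega)$ I would obtain from the fixed-point constructions in Theorems~\ref{th:RDEWellPosedFixedMeasure} and \ref{th:AbstractMKVSol}: $Y^{(N)}(\omega)$ is an $\omega$-a.s. limit of Picard iterates built from $(x^{(N)}(\omega),B^{(N)}(\omega))$ by continuous operations, $\omega\mapsto\cL_N(x^{(N)}(\omega),B^{(N)}(\omega))$ is $\cF$-measurable into $\cP_1$, and the McKean--Vlasov fixed point depends continuously on this law by the stability estimates, so the limit is $\cF$-measurable.

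For part~\ref{it:MeanFieldApproxWithB} I would invoke Theorem~\ref{th:AbstractMKVSolStable} with the two probability spaces taken to be $(\Omega^1,\cF^1,\PP^1)=(\Omega_N,\cF_N,\PP_N)$, carrying $\nu^1=\cL_N(x^{(N)}(\omega),B^{(N)}(\omega))$, and $(\Omega^2,\cF^2,\PP^2)=(\Omega,\cF,\PP)$, carrying $\nu^2=\cL(x,B)$. By part~\ref{it:NParticleWellPosed} and the Tanaka reformulation the associated solution on $\Omega_N$ is $\mu^1=\cL_N(Y^{(N)}(\omega))$ and on $\Omega$ it is $\mu^2=\cL(Y)$, so \eqref{eq:LawFixedPointGlobalStable} reads, for $\PP$-a.e. $\omega$,
\begin{equation*}
  \cW_{1;\cC^\beta_T}\big(\cL_N(Y^{(N)}(\omega)),\cL(Y)\big)\leq C\Big(\cW_{1;\bR^d}\big(\cL_N(x^{(N)}(\omega)),\cL(x)\big)+\cW_{2;\cC^\eta_T}\big(\cL_N(B^{(N)}(\omega)),\cL(B)\big)\Big),
\end{equation*}
which is \eqref{eq:NParticleStabillityWithB}. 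The one delicate point is the constant: Theorem~\ref{th:AbstractMKVSolStable} gives a $C$ depending on $\EE^{1,2}[[B^1]^{2p}_{\eta;T}]\vee\EE^{1,2}[[B^2]^{2p}_{\eta;T}]$, and with $B^1$ the coordinate process on $\Omega_N$ one has $\EE^1[[B^1]^{2p}_{\eta;T}]=\EE_N[[\bar{B}^{(N)}(\omega)]^{2p}_{\eta;T}]$; tracing where this enters the proof of Theorem~\ref{th:AbstractMKVSolStable} — only through moments $\EE_m[\fG^{2p}]$ of the optimal coupling, monotone in the noise moments — and majorising by its $\Omega$-expectation, one may take $C=C(T,\Gamma,\EE[[\bar{B}]^{2p}_{\eta;T}]\vee\EE[\EE_N[[\bar{B}^{(N)}]^{2p}_{\eta;T}]],\gamma,\eta,\beta)$, which is deterministic and $N$-independent.

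Parts~\ref{it:NParticleWellPosed} and \ref{it:MeanFieldWellPosed} are essentially immediate from Section~\ref{sec:AbstractMKVResults}, so I expect the main obstacle to lie entirely in part~\ref{it:MeanFieldApproxWithB}: keeping the two layers of probability (the abstract $\Omega$ carrying the data and the finite $\Omega_N$ carrying the particle index) cleanly separated when feeding them into Theorem~\ref{th:AbstractMKVSolStable}, and verifying that the constant it returns is genuinely controlled by the advertised deterministic quantity uniformly in $N$. Finally I would note, for completeness, that \eqref{eq:NParticleStabillityWithB} together with the hypothesis $\cL_N(x^{(N)},B^{(N)})\rightharpoonup\cL(x,B)$, Lemma~\ref{lem:WeakToWassConverge} and Proposition~\ref{prop:PortmanteauTheorem} upgrades at once to the genuine mean-field convergence $\cL_N(Y^{(N)})\rightharpoonup\cL(Y)$.
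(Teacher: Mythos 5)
Your proposal follows the paper's own proof essentially verbatim: parts \ref{it:NParticleWellPosed} and \ref{it:MeanFieldWellPosed} are obtained by applying Theorem~\ref{th:AbstractMKVSol} on $(\Omega_N,\cF_N,\PP_N)$ and $(\Omega,\cF,\PP)$ respectively (integrability on the finite space being automatic, measurability coming from continuity of the solution map), and part \ref{it:MeanFieldApproxWithB} by feeding this pair of spaces into Theorem~\ref{th:AbstractMKVSolStable}. Your additional remarks on how the constant depends on the noise moments, and the closing upgrade to weak convergence via Lemma~\ref{lem:WeakToWassConverge}, go slightly beyond the paper's proof, which simply asserts that \ref{it:MeanFieldApproxWithB} follows directly from Theorem~\ref{th:AbstractMKVSolStable}.
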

\begin{remark}
  The independence condition $\xi\perp B$ imposed in Theorem  \ref{th:AbstractMKVMeanField} is a consequence of the independence condition $(\zeta-\xi)\perp (\tilde{B},B)$ required by Theorem \ref{th:AbstractMKVSolStable}, where $\zeta\sim \cL(\xi^{(N)})(\omega)$ and $\tilde{B}\sim \cL(B^{(N)})(\omega)$. Since we fix $\omega\in \Omega$ for the random variables $(\xi^{(N)},B^{(N)})$, treating them as random variables on the space $\Omega_N$ defined below,  amounts to requiring $\xi\perp B$ w.r.t. $\PP$.
\end{remark}
\begin{proof}
The existence and uniqueness statements of points \ref{it:NParticleWellPosed} and \ref{it:MeanFieldWellPosed} are direct consequences of Theorem~\ref{th:AbstractMKVSol} with inputs $(\xi^{(N)}_0,B^{(N)})$ and $(\xi,B)$ on the probability spaces $(\Omega_N,\cF_N,\PP_N)$ and $(\Omega,\cF,\PP)$ respectively. The requirement that $\cL_N(\xi^{(N)}_0,B^{(N)}) \in \cP_{1,2}(\Omega_N;(\bR^d\times\cC^\eta_T)^N)$ is seen to be satisfied since for $\PP$-a.a. $\omega \in \Omega$
\begin{equation*}
  \EE_N\left[|\xi^{(N)}_0(\omega)|\right]+\EE_N\left[[B^{(N)}(\omega)]^2_{\eta;T} \right] = \frac{1}{N}\sum_{i=1}^N |\xi^i(\omega)|+ \frac{1}{N}\sum_{i=1}^N [B^i(\omega)]^2_{\eta;T} <\infty.
\end{equation*}
The measurabillity assertion of point \ref{it:NParticleWellPosed} follows from the continuity of the solution map $\bar{\Psi}_N:\cP^N_1(\bR^d\times\cC^\eta_T) \rightarrow \cP_1^N(\cC^\eta_T)$, so that $\omega \mapsto \cL_N(Y^{(N)}(\omega))$ is $\cF$ measurable so that in turn $\omega\mapsto Y^{(N)}(\omega) = S^{\cL_N(Y^{(N)}(\omega))}(\xi^{(N)}_0(\omega),B^{(N)}(\omega))$ is also $\cF$ measurable.\\ \par 
The mean field approximation result of \ref{it:MeanFieldApproxWithB} now follows directly from Theorem~\ref{th:AbstractMKVSolStable}.
\end{proof}
\section{Proofs of Main Results}\label{sec:MainResultsProofs}
We collect the proofs of Theorem \ref{th:IntroGeneralMainTheorem}, Theorem \ref{th:IntroMeanFieldApprox} and Corollary \ref{cor:RegularizeClassicalMKV}. We combine the results of, for example \cite{harang_20_cinfinity} or \cite{galeati_gubinelli_22_noiseless}, which ensure the existence of sufficiently regularising paths, with the abstract results of, Theorem \ref{th:AbstractNLETheorem} and Theorem \ref{th:AbstractMKVMeanField}, along with the preliminary results from Section \ref{sec:MeasureFlowsRegularity}.
\begin{proof}[Proof of Theorem \ref{th:IntroGeneralMainTheorem}]
In order to prove Theorem \ref{th:IntroGeneralMainTheorem} we need to ensure that given $K \in \cB^\sigma_{q,r}(\bR^d)$ there exists a $Z_t \in C([0,T];\bR^d)$ such that
\begin{equation*}
  \Gamma_{s,t} := K\ast L_{s,t},
\end{equation*}
satisfies the requirements of Theorem \ref{th:AbstractNLETheorem}. This can be done using the results of \cite{harang_20_cinfinity}, or \cite{galeati_gubinelli_22_noiseless}. More specifically, from Proposition \ref{prop: reg of avg op fBm} we see that we can choose $t\mapsto Z_t$ to be an fBm with Hurst parameter sufficiently low.
\end{proof}
\begin{proof}[Proof of Theorem \ref{th:IntroMeanFieldApprox}]
In order to prove Theorem \ref{th:IntroMeanFieldApprox}, with $\Gamma_{s,t}:= K\ast L_{s,t}$ as above, it suffices to apply Theorem \ref{th:AbstractMKVMeanField}. Since by assumption 
$$\lim_{N\rightarrow \infty}\cW_{1;\bR^d\times \cC^\eta_T}\left(\cL_N(\xi^{(N)}_0, B^{(N)}),\cL(\xi,B)\right)= 0,$$
it follows from \eqref{eq:NParticleStabillityWithB} that for any $\beta \in (1-\gamma,\gamma\wedge \eta)$ we have $\cW_{1;\cC^{\beta}_T}(\cL_N(Y^{(N)}),\cL(Y))\rightarrow 0$ as $N\rightarrow \infty$. Then, applying Proposition \ref{prop:PortmanteauTheorem} we obtain weak convergence of $\cL_N(Y^{(N)})$ to $\cL(Y)$.
\end{proof}
\begin{proof}[Proof of Corollary \ref{cor:RegularizeClassicalMKV}]
Let $K \in \cS^\prime(\bR^d)$ be a homogeneous distribution of order $\sigma<0$ (see Def. \ref{eq:HomogeneousDefinition}) and $Z \in C([0,T];\bR^d)$ distributed according to the law of an fBm with Hurst parameter $H \in (0,1)$ on a separate probability space $(\tilde{\Omega},\tilde{\cF},\tilde{\PP})$. So then from Proposition \ref{prop:HomogeneousDistBesovControl} and Remark \ref{rem:HomogeneousDistBpq}, for any $\varepsilon>0$ we have that $K \in \cB^{\sigma + \frac{d}{2}-\varepsilon}_{2,2}= \cH^{\sigma+\frac{d}{2}-\varepsilon}$. From Proposition \ref{prop: reg of avg op fBm} we see that for $\Gamma$ the averaging operator associated to $Z$, there exists a set of full measure $\tilde{\cN} \subseteq \tilde{\Omega}$ such that for all $\tilde{\omega}\in \tilde{\cN}$, $\Gamma K(\tilde{\omega}) \in \cC^\gamma_T\cC^{\sigma +\frac{1-\gamma}{H} -d\left(\frac{1}{2}-\gamma\right)}$ for any $\gamma \in \left(\frac{1}{2},1\right)$. Since we assume that $B$ takes values in $\cC^{1/2-\varepsilon}_T$ for any $\varepsilon>0$, almost surely, we have $1/2-\varepsilon+\gamma>0$ for any $\gamma\in \left(\frac{1}{2}+2\varepsilon,1\right)$. So then for all $H<\frac{1}{4-2\sigma}$ we have that $\Gamma K \in \cC^\gamma_T\cC^{\alpha}$ for some $\alpha >2$, $\gamma\in (1/2,1)$ and so the results of Theorems \ref{th:IntroGeneralMainTheorem} and \ref{th:IntroMeanFieldApprox} both apply.
\end{proof}

\section{Applications to McKean--Vlasov Equations with Homogeneous Interaction Kernels}\label{sec:Applications}
In the preceding sections we demonstrated a regularisation by noise result for generalised McKean--Vlasov equations. Many physically relevant particle systems and McKean--Vlasov models involve interaction kernels given by homogeneous distributions of negative order and particles driven either by idiosyncratic Brownian motions or with no additional forcing. Using Corollary \ref{cor:RegularizeClassicalMKV} we discuss some classical examples to which our method applies in this context, along with the necessary upper bound on the Hurst parameter of the regularising path. Note that when $B^{(N)}$ and $B$ are almost $1/2$-H\"older continuous the restriction on the Hurst parameter is entirely governed by the singularity of $K$.
\vspace{0.5em}
	\begin{enumerate}[label=(\roman*)]
	\item \textbf{Power Law Potentials:} In general, $K(x)\sim |x|^\sigma$ for $\sigma<0$. Arise in models of chemotaxis, plasma dynamics, galactic dynamics, flocking models, Landau models and Ginibre ensembles \cite{fournier_jourdain_17,biler_tadeeusz_93,serfaty_20,duerinckx_16,friesen_kutoviy_20,cattiaux_delebecque_pedesches_18}. In \cite{serfaty_20} the mean field approximation result is obtained for $(B^i)_{i=1}^N$ i.i.d Brownian motions and with repulsive kernels $K(x)\sim |x|^{-\sigma}$ for $\sigma \in (0,d)$. In \cite{bresch_jabin_wang_19} the same result is shown for $K(x)\sim -\chi |x|^{-d+1}$ for $\chi$ in a suitable region.\\ \par
	\noindent
	For $(B^i)_{i=1}^N$ at least as regular as the Brownian motion and converging in mean field scaling to $B$, then our results apply with $H <\frac{1}{4+2\sigma}$ for any $\sigma\in \bR$. \\
	\item \textbf{Biot--Savart Law:} Applied in the vorticity formulation of Euler and Navier--Stokes equations in $d=2$.
	\begin{equation*}
		K(x)\sim \frac{x^\perp}{|x|^2}, \quad x^\perp := (-\xi^2,\xi^1).
	\end{equation*}
	Since the kernel scales like the Coulomb potential in $2$-dimensions our results hold for $H<\frac{1}{6}$. In this instance, due to the rotational structure of the kernel more is known in the un-regularised case. Well-posedness of the limiting equation and propagation of chaos is known in both the viscous and inviscid cases, cf. \cite{fournier_hauray_mischler_14} and \cite{marchioro_pulvirenti_94}. A quantitative propagation of chaos result is also obtained in the viscous setting in \cite{jabin_wang_18}.
	\\
	\item \textbf{The Dirac:} Setting
	\begin{equation*}
		K(x)\sim \delta_0(x),
	\end{equation*}
	our results apply with $H<\frac{1}{4+2d}$. In \cite{sznitman_89}, Sznitman studied a particle approximation of one dimensional Burgers equation with the Dirac as the interaction kernel. Propagation of chaos and well-posedness results were shown in this case without additional regularisation but assuming $(B^i)_{i=1}^N$ to be i.i.d Brownian motions.\\
	\item \textbf{The Lennard--Jones Potentials:} Applied in particle simulations of crystallisation, the family of interactions, \cite{theil_05}
	\begin{equation*}
		K_{p,2p}(x)\sim |x|^{-2p}-2|x|^{-p}, \quad p>0,
	\end{equation*}
are known as Leonard--Jones potentials, cf. \cite{deluca_friesecke_18, theil_05}. Formally, these kernels converge to the Heitmann--Radin kernel
	\begin{equation*}
		K_{HR}(x):= \begin{cases}
			\infty, & |x|< 1,\\
			-1, & |x|=1,\\
			0, & |x|>1.
		\end{cases}
	\end{equation*}
Intuitively speaking, $K_{\text{HR}}$ acts to separate particles at distance $1$ from each other. The typical approach to theories of crystallisation is to study static minimizers of the free energy associated to the interaction, cf. \cite{dobrushin_kotecky_shlosman_92,deluca_friesecke_18, theil_05}. Therefore these models do not directly fit into our framework, however, we propose it would be interesting to consider dynamic approximations to crystalline structures, using our regularisation by noise approach. Using the Leonard--Jones potential, for fixed $p>0$ our results apply with $H<\frac{1}{4+2p}$ and $Z$ independent of $K$. In this setting we may vary $p \in [p_{\text{min}},p_{\text{max}}]$ without changing $Z$ provided we choose $H<\frac{1}{4+p_{\text{max}}}$. 
\end{enumerate}
\appendix
\section{Besov Spaces}\label{app:BesovSpaces}
We recall some definitions and standard results regarding the scale of Besov spaces on $\bR^d$. We refer to \cite{bahouri_chemin_danchin_11} for more details. We define the Fourier transform, and its inverse on $L^1(\bR^d)$ by setting,
\begin{equation*}
	\cF f(z) := \int_{\bR^d}f(x)e^{-ix\cdot z}\,\dd x,\quad 	\cF^{-1}f(x) := \frac{1}{(2\pi)^d} \int_{\bR^d}f(z)e^{i x\cdot z}\, \dd z.
\end{equation*}
It is standard that $\cF$ fixes $\cS(\bR^d)$ so we may extend both definitions to the tempered distributions by duality. We also recall the definition of a Fourier multiplier. For any $f \in \cS^\prime(\bR^d)$ and $\varphi:\bR^d\rightarrow \bR$ measurable and with at most polynomial growth, we define,
\begin{equation*}
	\varphi(D) f := \cF^{-1} \left(\varphi \cF f \right) = \left(\cF^{-1}\varphi\right)\ast f,
\end{equation*}
where $D$ is a placeholder for the derivative operator on $\bR^d$.
\subsection{Dyadic Partition of Unity and Littlewood-Payley Blocks}
We let $\tilde{\chi},\,\chi \in C^\infty_c(\bR^d)$ be such that
\begin{enumerate}
	\item $\supp\, \tilde{\chi}\subset B_{\frac{4}{3}}(0)$ and $\supp\, \chi \subset B_{\frac{8}{3}}(0)\setminus B_{\frac{3}{4}}(0)$,
	\item $\tilde{\chi}(\zeta) + \sum_{k=0}^\infty \chi(2^{-k}\zeta)=1$, for all $\zeta \in \bR$.
\end{enumerate}
The existence of such a dyadic partition of unity is shown in \cite[Prop. 2.10]{bahouri_chemin_danchin_11}. For $k\geq 0$ we define $\chi_k(\,\cdot\,) := \chi(2^{-k}\,\cdot\,)$ and set $\chi_k=0$ for all $k<-1$.\\ \par 
For any $f\in \cS^\prime(\bR^d)$ we define the inhomogeneous Littlewood--Paley blocks by setting,
\begin{equation}\label{eq:LPDecompose}
	\begin{aligned}
		\Delta_{-1} f &:= \tilde{\chi}(D)f= \tilde{h}\ast f,\\
		\Delta_k f &:=\chi_k(D)f=h(2^{k}\,\cdot\,)\ast f, \quad \forall\,k\geq 0,
	\end{aligned}
\end{equation}
where $\tilde{h}= \cF^{-1}\tilde{\chi}$ and $h = \cF^{-1}\chi$. Since $\tilde{h},\,h \in \cS(\bR^d)$, the operators $\Delta_k$ map $L^p$ to $L^p$ for any $p\in [1,\infty]$ with norms independent of $p$ and $k$.
\begin{definition}[Inhomogeneous Besov Spaces]\label{def:InHomBesovSpace}
	For $\alpha\in \mathbb{R}$ and $p,q\in[1,\infty]$, the inhomogeneous Besov space $\cB^\alpha_{p,q}(\bR^d)$ is defined by 
	\begin{equation*}
		\cB_{p,q}^\alpha(\bR^d) =\left\{f\in \mathcal{S}^\prime(\bR^d)\, :\,\|f\|_{\cB_{p,q}^\alpha(\bR^d)}:= \left(\sum _{k\geq-1} 2^{kq\alpha}\|\Delta_k f\|_{L^p}^q\right)^{\frac{1}{q}} <\infty \right\}.
	\end{equation*}
\end{definition}
For $p=q=\infty$ we use the notation
\begin{equation*}
	\cC^{\alpha}(\bR^d):=\Big\{f\in \mathcal{S}^\prime \,:\,\|f\|_{\cB_{\infty,\infty}^\alpha(\bR^d)}:= \sup_{j\geq -1} 2^{j\alpha}\|\Delta_j f\|_{L^\infty(\bR^d)} <\infty \Big\}.
\end{equation*}
For $\alpha >0$ and not an integer these spaces agree with the usual H\"older spaces, however for $a \in \bN$, $\cC^a(\bR^d)$ coincides with $W^{a,\infty}(\bR^d)$. When $p=q=2$ we use the special notation $\cH^\alpha(\bR^d) = \cB^\alpha_{2,2}(\bR^d)$ to denote the scale of Hilbertian Sobolev spaces, on which an equivalent norm is given by the expression
\begin{equation*}
	\|f\|_{\cH^\alpha} := \|(1+|\,\cdot\,|)^\alpha\cF(f) \|_{L^2}.
\end{equation*}
The Besov spaces enjoy a number of useful properties which we list below. Proofs of the following statements can be found in \cite{bahouri_chemin_danchin_11}. 
\begin{enumerate}[label=(\roman*)]
	\item Embeddings: for $\alpha \in \bR$, $1\leq p_1\leq p_2\leq \infty$ and $1\leq q_1\leq q_2 \leq \infty$ one has,
	\begin{equation}\label{eq:PwiseBesovEmbedding}
		\|\,\cdot\,\|_{\cB^{\alpha}_{p_2,q_2}} \lesssim \|\,\cdot\,\|_{\cB^{\alpha+d\left(\frac{1}{p_1}-\frac{1}{p_2}\right)}_{p_1,q_1}}.
	\end{equation}
	We also have the following, continuous embeddings,
	\begin{align}
		\|f\|_{\cB^\alpha_{p,q}}&\lesssim \|f\|_{\cB^{\alpha'}_{p,q}}\quad \alpha<\alpha' \in \bR,\label{eq:PwiseHolderBesovCompactEmbedding}\\
		\|f\|_{\cB^\alpha_{p,q}}&\lesssim \|f\|_{\cB^{\alpha}_{p,q'}}\quad q>q' \in [1,\infty],\label{eq:PwiseBesovEmbeddingQQ'}\\
		\|f\|_{\cB^\alpha_{p,q}}&\lesssim \|f\|_{\cB^{\alpha'}_{p,q'}}\quad \alpha<\alpha' \in \bR, \forall \, q\leq q'\in [1,\infty].\label{eq:PwiseBesovEmbeddingQQ'Reg}
	\end{align}
and the embedding $\cB^{\alpha'}_{p,q} \hookrightarrow\cB^{\alpha}_{p,q}$ of \eqref{eq:PwiseHolderBesovCompactEmbedding} is compact.
	\item Relations to $L^p$ spaces: For $p\in [1,\infty]$ one has,
	\begin{align*}
		\|f\|_{L^p}\lesssim \|f\|_{\cB^{0}_{p,1}},\quad
		\|f\|_{\cB^{0}_{p,\infty}}\lesssim \|f\|_{L^p}.
	\end{align*}
\end{enumerate}
A version of Young's convolution inequality holds in Besov spaces.
\begin{lemma}\label{lem: Youngs Convolution}
	(Young's convolution inequality) For $\alpha,\beta\in \mathbb{R}$, let $f\in \cB^\beta_{p,\infty}(\bR^d)$ and $g\in \cB^\alpha_{q,\infty}(\bR^d)$, and let $r\in[1,\infty]$  be defined through the relation $\frac{1}{r}+1=\frac{1}{p}+\frac{1}{q}$. Then
	\begin{equation}\label{eq:PwiseBesovYoung}
		\|f\ast g\|_{\cB^{\alpha+\beta}_{r,\infty}} \lesssim \|f\|_{\cB^{\beta}_{p,\infty}} \|g\|_{\cB^{\alpha}_{q,\infty}}.
	\end{equation}
\end{lemma}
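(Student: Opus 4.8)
The plan is to reduce the statement to the classical Young inequality for convolutions in Lebesgue spaces, $\|u\ast v\|_{L^r}\le\|u\|_{L^p}\|v\|_{L^q}$ whenever $\tfrac1r+1=\tfrac1p+\tfrac1q$, and to exploit the elementary but decisive fact that --- unlike pointwise multiplication, whose Fourier transform spreads frequency support --- convolution \emph{localizes} frequency support, since $\widehat{u\ast v}=\hat u\,\hat v$. First I would expand both inputs in Littlewood--Paley blocks, $f=\sum_{j\ge-1}\Delta_j f$ and $g=\sum_{k\ge-1}\Delta_k g$, so that $f\ast g=\sum_{j,k\ge-1}\Delta_j f\ast\Delta_k g$.

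The key step is a spectral support observation. Since (writing $\chi_{-1}:=\tilde\chi$)
\begin{equation*}
  \widehat{\Delta_j f\ast\Delta_k g}=\chi_j\,\chi_k\,\hat f\,\hat g ,
\end{equation*}
and the annular supports of $\chi_j,\chi_k$ --- respectively the ball support of $\tilde\chi$ for the index $-1$ --- are disjoint once $|j-k|$ exceeds a fixed integer $N_0$ depending only on the chosen partition of unity (here $\supp\tilde\chi\subset B_{\frac43}(0)$, $\supp\chi\subset B_{\frac83}(0)\setminus B_{\frac34}(0)$), one has $\Delta_j f\ast\Delta_k g\equiv0$ for $|j-k|>N_0$. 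Moreover, for $|j-k|\le N_0$ the Fourier support of $\Delta_j f\ast\Delta_k g$ lies in a fixed dilate of the annulus $\{|\xi|\sim2^{j\vee k}\}$ (a ball when $j\wedge k=-1$), so there is a further absolute constant $c_0$ with $\Delta_\ell(\Delta_j f\ast\Delta_k g)=0$ unless $|\ell-j|\le c_0$. Hence each block of the convolution,
\begin{equation*}
  \Delta_\ell(f\ast g)=\sum_{|j-\ell|\le c_0,\ |j-k|\le N_0}\Delta_\ell\big(\Delta_j f\ast\Delta_k g\big),
\end{equation*}
is a sum of boundedly many terms in which $j$ and $k$ are both comparable to $\ell$.

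Next I would estimate each term using the $L^p$-boundedness of $\Delta_\ell$ (uniform in $\ell$) and the $L^r$ Young inequality, $\big\|\Delta_\ell(\Delta_j f\ast\Delta_k g)\big\|_{L^r}\lesssim\|\Delta_j f\|_{L^p}\,\|\Delta_k g\|_{L^q}$. Writing $\|\Delta_j f\|_{L^p}=2^{-j\beta}(2^{j\beta}\|\Delta_j f\|_{L^p})$, $\|\Delta_k g\|_{L^q}=2^{-k\alpha}(2^{k\alpha}\|\Delta_k g\|_{L^q})$ and using $|j-\ell|\le c_0$, $|k-\ell|\le c_0+N_0$ to absorb the harmless factor $2^{(\ell-j)\beta+(\ell-k)\alpha}$ into the implicit constant, one obtains
\begin{equation*}
  2^{\ell(\alpha+\beta)}\big\|\Delta_\ell(f\ast g)\big\|_{L^r}\lesssim\sum_{|j-\ell|\le c_0}\ \sum_{|k-j|\le N_0}\big(2^{j\beta}\|\Delta_j f\|_{L^p}\big)\big(2^{k\alpha}\|\Delta_k g\|_{L^q}\big).
\end{equation*}
Taking the supremum over $\ell$ yields $\|f\ast g\|_{\cB^{\alpha+\beta}_{r,\infty}}\lesssim\|f\|_{\cB^\beta_{p,\infty}}\|g\|_{\cB^\alpha_{q,\infty}}$, which already gives \eqref{eq:PwiseBesovYoung} in the form it is invoked in this paper (recall $\cH^s=\cB^s_{2,2}\hookrightarrow\cB^s_{2,\infty}$ and $\cB^{s}_{\infty,\infty}=\cC^s$, cf. \eqref{eq:PwiseBesovEmbeddingQQ'}); the version with finite summability indices $\cB^{\beta}_{p,q_1}$, $\cB^{\alpha}_{q,q_2}$ follows verbatim, replacing the final $\ell^\infty$ bound by a discrete Hölder inequality on the sequences $(2^{j\beta}\|\Delta_j f\|_{L^p})_j$, $(2^{k\alpha}\|\Delta_k g\|_{L^q})_k$ (the fixed-width shifts in $j,k,\ell$ preserving summability), landing in $\cB^{\alpha+\beta}_{r,s}$ with the companion relation $\tfrac1s=\tfrac1{q_1}+\tfrac1{q_2}$.

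I do not expect a serious obstacle: the whole content is the frequency localization of convolution, which makes this estimate strictly easier than the analogous bound for products (no Bony paraproduct decomposition is needed, and no sign restriction on $\alpha,\beta$ arises). The only care required is the combinatorial bookkeeping of which pairs $(j,k,\ell)$ contribute --- fixing $N_0$ and $c_0$ from the support conditions on $\tilde\chi$ and $\chi$, and separately tracking the low-frequency block $\Delta_{-1}$ against the first few $\Delta_k$ --- but this is routine and only affects the values of absolute constants.
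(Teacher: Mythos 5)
The paper itself states this lemma without proof, treating it as a standard Besov-space fact (in the spirit of the results it cites from Bahouri--Chemin--Danchin), so there is no in-paper argument to compare against; your proof is the standard Littlewood--Paley argument and it is correct. The essential points are all in place: $\Delta_j f\ast\Delta_k g$ vanishes unless $|j-k|\leq N_0$ because its Fourier transform is $\chi_j\hat f\,\chi_k\hat g$, supported in the \emph{intersection} of the two annuli; the surviving terms are frequency-localised at scale $2^{j}$, so only $|\ell-j|\leq c_0$ contributes to $\Delta_\ell(f\ast g)$; and uniform $L^r$-boundedness of $\Delta_\ell$ together with classical Young on each block, followed by resummation along the bounded diagonal band, gives the bound, with the $\ell^\infty$ third index already covering the only way the lemma is invoked in the paper ($\cH^\beta\ast\cH^\alpha\hookrightarrow\cC^{\alpha+\beta}$ in the proof of Proposition \ref{prop: reg of avg op fBm}). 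The single point worth adding is a word on the meaning of $f\ast g$ when both factors are genuine distributions: in the paper's application one factor is the integrable local-time density, so the convolution is classically defined, and in general one defines it as the limit of the block sums, whose convergence in $\cS^\prime$ is precisely what your diagonal estimate furnishes.
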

\subsection{Functions of H\"older Continuity on Intervals of the Real Line}\label{subsec:HomogeneousDist}

The next lemma gives a useful criteria for extending a local control on the H\"older continuity of a path to a global one. The statement is based on \cite[Exercise 4.24]{friz_hairer_14}, and we include a short proof similar to the one given there. 
\begin{lemma}\label{lem:LocalToGlobalHolderBound}
	Let $E$ be a Banach space, $\alpha\in (0,1)$, $T>0$ and $X:[0,T]\rightarrow E$. Suppose that there exists a constant $M>0$ and $h\in (0,T]$ such that for any $t \in [0,T-h]$ we have $[X]_{\alpha;[t,t+h]} \leq M$. Then $X$ is $\alpha$-H\"older continuous on $[0,T]$ and
	\begin{equation*}
		[X]_{\alpha;T} \leq M(1\vee 2h^{\alpha-1})T^{1-\alpha}. 
	\end{equation*}
\end{lemma}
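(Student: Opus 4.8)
The plan is a standard uniform chaining argument, organised by a dichotomy on the size of the increment $t-s$ for a generic pair $0\le s<t\le T$.

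First, for \emph{small} increments, $t-s\le h$, I would note that one can always slide a window of length $h$ covering $[s,t]$ while staying inside $[0,T]$: with $a:=\min(s,T-h)$ one has $0\le a$, $a+h\le T$, and $[s,t]\subseteq[a,a+h]$ (checking the two cases $s\le T-h$ and $s>T-h$ separately, using $t-s\le h$ in the first and $a+h=T\ge t$ in the second). The hypothesis applied on $[a,a+h]$ then gives at once $\|X_t-X_s\|_E\le M|t-s|^\alpha$. To match the claimed constant it remains to observe that $(1\vee 2h^{\alpha-1})T^{1-\alpha}\ge 1$ whenever $0<h\le T$ and $\alpha\in(0,1)$: this is a short elementary check splitting on whether $2h^{\alpha-1}\ge1$ (if so, $2h^{\alpha-1}T^{1-\alpha}\ge 2h^{\alpha-1}h^{1-\alpha}=2$; if not, then $h^{1-\alpha}>2$, so $T^{1-\alpha}\ge h^{1-\alpha}>1$).

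Second, for \emph{large} increments, $t-s>h$, I would subdivide $[s,t]$ into $n:=\lceil (t-s)/h\rceil\ge 2$ equal pieces $s=t_0<\dots<t_n=t$ of common length $(t-s)/n\le h$, apply the local bound on each piece, and sum:
\[
\|X_t-X_s\|_E\ \le\ \sum_{i=1}^n\|X_{t_i}-X_{t_{i-1}}\|_E\ \le\ M\,n\Big(\tfrac{t-s}{n}\Big)^\alpha\ =\ M\,n^{1-\alpha}(t-s)^\alpha .
\]
The key quantitative input is the bound $n<2(t-s)/h$, valid because $(t-s)/h>1$; together with $(t-s)^{1-\alpha}\le T^{1-\alpha}$ and $2^{1-\alpha}\le 2$ this turns the right-hand side into $2M\,h^{\alpha-1}T^{1-\alpha}\,|t-s|^\alpha\le M(1\vee 2h^{\alpha-1})T^{1-\alpha}\,|t-s|^\alpha$. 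Taking the supremum over all admissible $s<t$ of the two estimates yields $[X]_{\alpha;[0,T]}\le M(1\vee 2h^{\alpha-1})T^{1-\alpha}$, which is the claim.

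There is no genuine obstacle here: the argument is purely metric and combinatorial. The only points that need a little care — and where the factor $2$ in the constant is produced — are the window-sliding construction in the small-increment case (so as not to leave $[0,T]$) and the elementary counting estimate $\lceil(t-s)/h\rceil<2(t-s)/h$ in the large-increment case.
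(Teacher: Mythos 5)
Your proof is correct and follows essentially the same chaining argument as the paper: split on whether $t-s\le h$, subdivide large increments into at most $2(t-s)/h$ pieces of length at most $h$, and sum the local bounds (your use of $n=\lceil (t-s)/h\rceil$ equal pieces versus the paper's pieces of length exactly $h$ plus a remainder is only a cosmetic difference). If anything you are more careful than the paper in the small-increment case, where the paper simply says ``there is nothing to prove'' while you justify the sliding window $[a,a+h]\subseteq[0,T]$ and check that the stated constant dominates $1$.
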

\begin{proof}
	We need to show that for any $0\leq s\leq t\leq T $ then $\frac{\|X_{s,t}\|_E}{|t-s|^\alpha}\leq M(1\vee 2h^{\alpha-1})T^{1-\alpha}$. In the case when $|t-s|\leq h$ there is nothing to prove, so let $|t-s|\geq h$. Define $t_i=(s+ih)\wedge t$ for $i\in \bN$. Note that for $N\geq (t-s)/h$, $t_N=t$, and that $t_{i+1}-t_i\leq h$ for all $i\in \bN$. Therefore, we have
	\begin{equation*}
		\|X_{s,t}\|_E\leq \sum_{0 \leq i < (t-s)/h} \|X_{t_{i},t_{i+1}}\|_E \leq M h^\alpha \left(1+\frac{t-s}{h}\right) \leq 2Mh^
		\alpha \frac{t-s}{h} \leq 2Mh^{\alpha-1}|t-s|^\alpha T^{1-\alpha},
	\end{equation*}
which concludes the proof.
\end{proof}
Working with the Wasserstein distance for probability measures on H\"older spaces for the contraction arguments in the proofs of existence and uniqueness, we need a similar property to that in Lemma \ref{lem:LocalToGlobalHolderBound} for these distances. The next lemma is therefore a variation of Lemma \ref{lem:LocalToGlobalHolderBound} adapted to this special case.  
\begin{lemma}\label{lem:WassLocalToGlobal}
    Let $T>0$, $\alpha \in (0,1)$ and $\mu^1,\,\mu^2 \in \cP(C_T^\alpha)$. Suppose there exists a constant $M>0$ and $h\in (0,T]$ such that for any $t\in[0,T-h]$,
    \begin{equation}\label{eq:WassScalingAssumption}
        \cW_{1;C^\alpha_{[t,t+h]}}(\mu^1,\mu^2) \leq M.
    \end{equation}
    Then
    \begin{equation}\label{eq:WassScalingConclusion}
        \cW_{1;C_T^\alpha}(\mu^1,\mu^2)\leq \cW_{1;\bR^d}(\mu^1_0,\mu^2_0) + M(1\vee 2h^{-1}T).
    \end{equation}
\end{lemma}
\begin{proof}
We first note that for $\mu^1,\,\mu^2\in \cP(\cC^{\alpha}_T)$ there exists an optimal coupling given by random variables $Y^1,\,Y^2\in \cC^\alpha_T$ on a common probability space such that
\begin{equation*}
    \cW_{1;\cC^{\alpha}_{[t,t+h]}}(\mu^1,\mu^2) = \EE\left[\|Y^1-Y^2\|_{\alpha;[t,t+h]}\right] = \EE\left[|Y_t-Y_t|_{\bR^d}\right] + \EE\left[[Y^1-Y^2]_{\alpha;[t,t+h]}\right],
\end{equation*}
and in particular,
\begin{equation*}
     \cW_{1;\cC^{\alpha}_T}(\mu^1,\mu^2) = \EE\left[\|Y^1-Y^2\|_{\alpha;T}\right] = \cW_{1;\bR^d}(\mu^1_0,\mu^2_0) + \EE\left[[Y^1-Y^2]_{\alpha;T}\right].
\end{equation*}
From \eqref{eq:WassScalingAssumption} it follows that for any $t\in [0,T-h]$,
\begin{equation*}
    \EE\left[[Y^1-Y^2]_{\alpha;{[t,t+h]}}\right] \leq M
\end{equation*}
and so in order to conclude it suffices to show that for a given random variable $X$, if
    $$\EE\left[[X]_{\alpha;[t,t+h]}\right]\leq M,$$
    uniformly over $t\in [0,T-h]$, then
    $$\EE\left[[X]_{\alpha;T}\right]\leq M\left(1\vee 2h^{-1}T\right).$$
    To this end, we will follow a similar procedure as used in the proof of Lemma \ref{lem:LocalToGlobalHolderBound}. We begin to observe that 
    \begin{equation*}
        \EE\left[[X]_{\alpha;T} \right] = \EE\left[\sup_{\substack{0\leq s<t\leq T\\
       |t-s|<h }} \frac{|X_{s,t}|}{|t-s|^\alpha}\right] + \EE\left[\sup_{\substack{0\leq s<t\leq T\\
       |t-s|\geq h }} \frac{|X_{s,t}|}{|t-s|^\alpha}\right] \leq M+ \EE\left[\sup_{\substack{0\leq s<t\leq T\\
       |t-s|\geq h }} \frac{|X_{s,t}|}{|t-s|^\alpha}\right].
    \end{equation*}
    To control the second term let us define $t_i = (s+ih)\wedge t$ for $i\in \bN$ so that for $N\geq (t-s)/h$, $t_N=t$ and $|t_{i+1}-t_i|\leq h$ for all $i =\{1,\ldots,N-1\}$. Also note that since $\sup_{t\in [0,T-h]}\EE[[X]_{\alpha;[t,t+h]}]\leq M$ the random variables $[X]_{\alpha;[t,t+h]}$ is $\PP$-a.s. finite. We therefore have,
    \begin{align*}
        \EE\left[\sup_{\substack{0\leq s<t\leq T\\
       |t-s|\geq h }} \frac{|X_{s,t}|}{|t-s|^\alpha}\right]
       &\leq \EE\left[\sup_{\substack{0\leq s<t\leq T\\
       |t-s|\geq h }} \frac{1}{|t-s|^\alpha} \sum_{0\leq i <(t-s)/h}[X]_{t_i,t_i+h} h^{\alpha}  \right]\\
       &\leq \sum_{0\leq i \leq T/h}\EE\left[[X]_{t_i,t_i+h}\right]\\
       & \leq 2h^{-1}T M,
    \end{align*}
    where in the last passage we have used that $(1+\frac{T}{h})\leq 2Th^{-1}$. This concludes the proof. 
\end{proof}
\subsection{Besov Regularity of Homogeneous Distributions}\label{subsec:HomogeneousDistributions}
In Section \ref{sec:MainResultsProofs} we discuss applications of our general result \eqref{th:IntroGeneralMainTheorem} to some specific McKean--Vlasov problems where $K$ is a given homogeneous distribution. In this subsection we discuss the regularity of these distributions in Besov spaces. \\ \par
For $\varphi \in \cS(\bR^d)$ and $\lambda>0$ we define the dilation by
\begin{equation}
	\varphi_\lambda(x):= \lambda^{-d}\varphi\left(\lambda^{-1}x_1,\ldots,\lambda^{-1}x_d\right).
\end{equation}
\begin{definition}[Homogeneous Distribution]\label{sec:HomogeneousDistributions}
	We say that $K\in \cS^\prime(\bR^d)$ is homogeneous of degree $\sigma\in \bR$ if for any $\varphi\in \cS(\bR^d)$ and $\lambda>0$ one has,
	\begin{equation}\label{eq:HomogeneousDefinition}
		\langle K, \varphi_\lambda\rangle = \lambda^{\sigma}\langle K,\varphi\rangle.
	\end{equation}
\end{definition}
Replacing $\bR^d$ with the punctured domain $\bR^d\setminus\{0\}$ we can instead define the notion of homogeneous distributions in $\cS^\prime(\bR^d\setminus\{0\})$. For any $\sigma\in \bR\setminus \bZ_{\leq-d}$ all homogeneous distributions on $\cS^\prime(\bR^d\setminus\{0\})$ of order $\sigma$ are given by functions of the form
\begin{equation}\label{eq:HomogeneousDistributionForm}
	\tilde{K}_\sigma(x)=f\left(\frac{x}{|x|} \right)|x|^\sigma,
\end{equation}
where $f \in \cS^\prime(\bS^{d-1})$ is a distribution on the $d$-dimensional unit sphere. If $\sigma >-d$ then $\tilde{K}_\sigma(x)$ extends uniquely to a homogeneous distribution $K_\sigma \in \cS^\prime(\bR^d)$ without modification. However, for $\sigma \leq -d$ the question of extending $\tilde{K}_\sigma$ to a distribution on the un-punctured plane is more complicated. For a full discussion see \cite[Sec. 3.2]{hormander_03}, while we simply state the main results here.\\ \par 
When $\sigma<-d$ and not an integer there exists a unique extension $K_\sigma$ defined by
\begin{equation}\label{eq:HomogeneousDistPVExtension}
	\langle K_\sigma,\varphi\rangle = \int_{\bR^d}\tilde{K}_\sigma(x) \left(\varphi(x)-P^\varphi_{\floor{-\sigma-d};0}(x)\right)\,\dd x
\end{equation}
where $P^\varphi_{k;0}$ is the Taylor polynomial to order $k-1$ of $\varphi$ at $0$. This is proved as \cite[Theorem 3.2.3]{hormander_03}. We refer to \eqref{eq:HomogeneousDistPVExtension} as the principle value extension of $\tilde{K}_\sigma$.
\\ \par 
For $\sigma \in \bZ_{\leq -d}$ the formula \eqref{eq:HomogeneousDistPVExtension} does define an extension of $\tilde{K}_\sigma$ but it fails to be unique. One can always add any linear combination of sufficiently high derivatives of the Dirac delta, cf. \cite[Thm. 3.2.4]{hormander_03}. For $\sigma=-n$ with $n\in \bZ_{\geq d}$ we choose to define the extension of $\tilde{K}_n$ by the formula
\begin{equation*}
	\langle K_n,\varphi\rangle = \int_{\bR^d}\tilde{K}_n(x) \left(\varphi(x)-P^\varphi_{n-d;0}(x)\right)\,\dd x + \sum_{|a|=n-d}D^{a}\varphi(0).
\end{equation*}
Defining the convolution of a distribution in the usual way, we see that for all $\sigma <0$ we have
\begin{equation}\label{eq:RenormKernelConv}
	\begin{aligned}
	(K_{\sigma}\ast\varphi)(x) :&=\int_{\bR^d}\tilde{K}_\sigma(x-y) \left(\varphi(y)-P^\varphi_{\floor{-\sigma-d};0}(y)\right)\,\dd y \\
	&\quad + \mathds{1}_{\{\sigma \in \bZ_{\leq -d}\}}\sum_{|a|=\sigma-d}D^{a}\varphi(0),
	\end{aligned}
\end{equation}
for all $\varphi \in \cS(\bR^d)$. The Fourier transform of a homogeneous distribution is nicely described by the following theorem.
\begin{proposition}\label{prop:FTofHomogeneousDist}
	Let $K\in \cS^\prime(\bR^d)$ be a homogeneous distribution of order $\sigma \in \bR$, then $\cF K \in \cS^\prime(\bR^d)$ and is a homogeneous distribution of order $-(\sigma+d)$.
\end{proposition}
\begin{proof}
	See the proof of \cite[Theorem 7.1.16]{hormander_03}
\end{proof}
This result suggests that $K_\sigma$ should be controlled in a suitable space of negative regularity, in fact it almost immediately follows that $K_\sigma \in \cH^{\sigma+\frac{d}{2}}$. Below we give a self contained proof that for $\sigma<0$, $K_\sigma \in \cB^{\sigma+\frac{d}{p}}_{p,\infty}$ for any $p\in [1,\infty]$.
\begin{theorem}\label{prop:HomogeneousDistBesovControl}
		Let $\sigma <0$ and $\tilde{K}_\sigma\in \cS^\prime(\bR^d\setminus\{0\})$ be the distribution described by \eqref{eq:HomogeneousDistributionForm} and let $K_\sigma$ be its principle value extension to $\cS^\prime(\bR^d)$ defined in \eqref{eq:HomogeneousDistPVExtension}. Then $K_\sigma \in \cB^{\sigma+d/p}_{p,\infty}(\bR^d)$ for any $p,\,q\in [1,\infty]$.
\end{theorem}
\begin{proof}
	From the Besov embeddings \eqref{eq:PwiseBesovEmbedding}, for any $p,q\in [1,\infty]$ we have that
	\begin{equation*}
		\|K_\sigma\|_{\cB^{\sigma+d/p}_{p,\infty}} \lesssim \|K_\sigma\|_{\cB^{\sigma+d}_{1,\infty}},
	\end{equation*}
	so we concentrate on showing that $K_\sigma \in \cB^{\sigma+d}_{1,\infty}$. From Proposition \ref{prop:FTofHomogeneousDist} in combination with the definition of the Littlewood--Paley blocks, \eqref{eq:LPDecompose}, and using \eqref{eq:HomogeneousDefinition}, for any $k \geq 0$ we have
	\begin{equation*}
		\Delta_{k}K_{\sigma}= 2^{-(\sigma +d)k}\Delta_0 K_\sigma.
	\end{equation*}
Therefore, we have
\begin{equation}
	\sup_{k\geq 0} 2^{(\sigma+d)k}\|\Delta_k K_\sigma\|_{L^1}= \|\Delta_0 K_\sigma\|_{L^1}.
\end{equation}
So it suffices to show that $\|\Delta_{-1}K_\sigma\|_{L^1},\,\|\Delta_0 K_\sigma\|_{L^1}$ are both finite. We choose a smooth, cut-off function $\psi\in C^\infty_c(\bR^d)$ such that $\supp(\psi)= B_1(0)$ and $\|\psi\|_{C^\infty_c(\bR^d)}\leq 1$. Then we write,
	\begin{equation*}
		\tilde{K}_\sigma = \psi \tilde{K}_\sigma+(1-\psi)\tilde{K}_\sigma := \tilde{K}_{\sigma,0}+ \tilde{K}_{\sigma,1}
	\end{equation*}
	and define the principle value extensions $K_{\sigma,0}$ and $K_{\sigma,1}$ analogously. Then we divide the proof into two cases, $-d<\sigma <0$ and $\sigma\leq -d$.
\\ \par 
	First consider the case $-d<\sigma<0$. We directly have that $K_{\sigma,0}\in L^1(\bR^d)$ so since $\Delta_{-1}$ and $\Delta_0$ are both bounded maps $L^p\rightarrow L^p$ we have $\Delta_{-1}K_{\sigma,0},\,\Delta_{0}K_{\sigma,0}\in L^1(\bR^d)$. Regarding the part supported away from the origin, using that the functions $h,\,\tilde{h}$ from \eqref{eq:LPDecompose} decay faster than any polynomial we also have $\Delta_{-1}K_{\sigma,1},\,\Delta_0 K_{\sigma,1} \in L^1(\bR^d)$.\\\par 
	
	When $\sigma\leq -d$ the situation is reversed. In this case we see that $K_{\sigma,1}\in L^1(\bR^d)$ directly and so by the boundedness of $\Delta_0$ and $\Delta_{-1}$ as mappings $L^p\rightarrow L^p$ we have $\Delta_{-1}K_{\sigma,1},\,\Delta_{0}K_{\sigma,1}\in L^1(\bR^d)$. 
	Regarding the compactly supported term the proofs for $\Delta_{-1}K_{\sigma,0}$ and $\Delta_{0}K_{\sigma,0}$ are very similar, and so we only present the $-1$ block. Using Taylor's theorem we have,
	\begin{align*}
		\Delta_{-1}K_{\sigma,0}(x)
		&\leq \int_{B_1(x)}|x-y|^\sigma\left|\tilde{h}(y)-P^{\tilde{h}}_{k;y}(x)\right|\,\dd y + \mathds{1}_{\sigma\in \bZ_{\leq -d}}D^{|\sigma|-d}\tilde{h}(x)\\
		&\leq \|D^{k+1}\tilde{h}\|_{L^\infty(B_1(x))}\left(\int_{B_1(x)}|x-y|^{\sigma+k+1}\,\dd y +1\right)\\
		&\lesssim (1+|x|^{k+1})^{-1}\|\tilde{h}\|_{k+1,\cS},
	\end{align*}
	where we used the fact that $\sigma+k+1>-d$ to evaluate the integral. The last line is integrable over $\bR^d$ and so we have $\Delta_{-1}K_{\sigma,0} \in L^1(\bR^d)$. Applying the same argument to $\Delta_{0}K_{\sigma,0}$ we have $\|\Delta_{0}K_{\sigma,0}\|_{L^1}<\infty$.\\ \par
	In conclusion, for any $\sigma <0$ we have
	\begin{equation*}
		\|K_\sigma\|_{\cB^{\sigma+d}_{1,\infty}} = \sup_{k\geq -1}2^{(\sigma +d)k} \|\Delta_k K_\sigma\|_{L^1} \leq \sup_{k \in \{0,1\}} \|\Delta_{k}K_\sigma\|_{L^1} <\infty.
	\end{equation*}
\end{proof}
\begin{remark}\label{rem:HomogeneousDistBpq}
	Using the Besov embedding \eqref{eq:PwiseBesovEmbeddingQQ'Reg}, for any $\varepsilon>0$ we also have that $K_\sigma \in \cB_{p,q}^{\sigma+d/2-\varepsilon}(\bR^d)$ for any $q \in [1,\infty)$.
\end{remark}


\begin{thebibliography}{10}

\bibitem{jabin_wang_17}
P.-E. Jabin and Z.~Wang, {\em Mean field limit for stochastic particle
  systems}, pp.~379--402.
\newblock Springer International Publishing, 2017.

\bibitem{hambly_ledger_sojmark_19}
B.~Hambly, S.~Ledger, and A.~Søjmark, ``A {McKean--Vlasov} equation with
  positive feedback and blow-ups,'' {\em Ann. Appl. Probab.}, vol.~29,
  pp.~2338--2373, 08 2019.

\bibitem{fournier_jourdain_17}
N.~Fournier and B.~Jourdain, ``Stochastic particle approximation of the
  {K}eller--{S}egel equation and two-dimensional generalization of {B}essel
  processes,'' {\em Ann. Appl. Probab.}, vol.~27, no.~5, pp.~2807--2861, 2017.

\bibitem{gomes_stuart_wolfram_19}
S.~N. Gomes, A.~M. Stuart, and M.-T. Wolfram, ``Parameter estimation for
  macroscopic pedestrian dynamics models from microscopic data,'' {\em SIAM
  Journal on Applied Mathematics}, vol.~79, no.~4, pp.~1475--1500, 2019.

\bibitem{friesen_kutoviy_20}
M.~Friesen and O.~Kutoviy, ``Stochastic {Cucker--Smale} flocking dynamics of
  jump-type,'' {\em Kinetic {\&} Related Models}, vol.~13, no.~2, pp.~211--247,
  2020.

\bibitem{tzen_raginsky_20}
B.~Tzen and M.~Raginsky, ``A mean-field theory of lazy training in two-layer
  neural nets: entropic regularization and controlled {McKean-Vlasov}
  dynamics.'' Available at: \url{https://arxiv.org/abs/2002.01987}, 2020.

\bibitem{chafai_15}
D.~Chafa{\"i}, ``From boltzmann to random matrices and beyond,'' {\em Annales
  de la {facult\'e} des sciences de Toulouse {Math\'ematiques}}, vol.~24,
  no.~4, pp.~641--689, 2015.

\bibitem{serfaty_20}
S.~Serfaty, ``Mean field limit for coulomb-type flows,'' {\em Duke Math. J.},
  vol.~169, pp.~2887--2935, 10 2020.

\bibitem{sznitman_89}
A.-S. Sznitman, ``Topics in propagation of chaos,'' in {\em Ecole d'Et{\'e} de
  Probabilit{\'e}s de Saint-Flour XIX --- 1989} (P.-L. Hennequin, ed.),
  pp.~165--251, Springer Berlin Heidelberg, 1991.

\bibitem{fournier_hauray_mischler_14}
N.~Fournier, M.~Hauray, and S.~Mischler, ``Propagation of chaos for the {2D}
  viscous vortex model,'' {\em Journal of the European Mathematical Society},
  vol.~16, no.~7, pp.~1423--1466, 2014.

\bibitem{duerinckx_16}
M.~Duerinckx, ``Mean-field limits for some riesz interaction gradient flows,''
  {\em SIAM Journal on Mathematical Analysis}, vol.~48, no.~3, pp.~2269--2300,
  2016.

\bibitem{holding_16}
T.~Holding, ``Propagation of chaos for {H\"older} continuous interaction
  kernels via {Glivenko--Cantelli},'' 2016.

\bibitem{jabin_wang_18}
P.-E. Jabin and Z.~Wang, ``Quantitative estimates of propagation of chaos for
  stochastic systems with {$W^{-1,\infty}$} kernels,'' {\em Inventiones
  mathematicae}, vol.~214, no.~1, pp.~523--591, 2018.

\bibitem{bresch_jabin_wang_19}
D.~Bresch, P.-E. Jabin, and Z.~Wang, ``On mean-field limits and quantitative
  estimates with a large class of singular kernels: Application to the
  {Patlak--Keller--Segel} model,'' {\em Comptes Rendus Mathematique}, vol.~357,
  no.~9, pp.~708 -- 720, 2019.

\bibitem{bossy_05_particle}
M.~Bossy, ``Some stochastic particle methods for nonlinear parabolic {PDE}s,''
  in {\em G{RIP}---{R}esearch {G}roup on {P}article {I}nteractions}, vol.~15 of
  {\em ESAIM Proc.}, pp.~18--57, EDP Sci., Les Ulis, 2005.

\bibitem{chaintron_diez_21_review}
L.-P. Chaintron and A.~Diez, ``Propagation of chaos: a review of models,
  methods and applications,'' 2021.

\bibitem{catellier_gubinelli_16}
R.~Catellier and M.~Gubinelli, ``Averaging along irregular curves and
  regularisation of {ODEs},'' {\em Stochastic Processes and their
  Applications}, vol.~126, no.~8, pp.~2323 -- 2366, 2016.

\bibitem{harang_20_cinfinity}
F.~A. Harang and N.~Perkowski, ``C-infinity regularization of {ODE}s perturbed
  by noise,'' {\em Stochastics and Dynamics}, 2020.

\bibitem{galeati_gubinelli_22_noiseless}
L.~Galeati and M.~Gubinelli, ``Noiseless regularisation by noise,'' {\em Rev.
  Mat. Iberoam.}, vol.~38, no.~2, pp.~433--502, 2022.

\bibitem{galeati_harang_20}
L.~Galeati and F.~A. Harang, ``Regularization of multiplicative {SDE}s through
  additive noise.'' Available at: \url{https://arxiv.org/abs/2008.02335}
  \emph{To appear in Ann. Appl. Probab.}, 2020.

\bibitem{delarue_flandoli_vincenzi_13}
F.~Delarue, F.~Flandoli, and D.~Vincenzi, ``Noise prevents collapse of
  {V}lasov--{P}oisson point charges,'' {\em Communications on Pure and Applied
  Mathematics}, vol.~67, no.~10, pp.~1700--1736, 2013.

\bibitem{Zvonkin_74}
A.~K. Zvonkin, ``A transofmation of the phase space of a diffusion process that
  removes the drift,'' {\em Mathematics of the {USSR}-Sbornik}, vol.~22,
  pp.~129--149, feb 1974.

\bibitem{veretennikov_81}
A.~J. Veretennikov, ``On strong solutions and explicit formulas for solutions
  of stochastic integral equations,'' {\em Mathematics of the {USSR}-Sbornik},
  vol.~39, pp.~387--403, apr 1981.

\bibitem{krylov_rockner_05}
N.~V. Krylov and M.~R{\"o}ckner, ``Strong solutions for stochastic equations
  with singular time dependent drift,'' {\em Th. Rel. Fields}, pp.~154--196,
  2005.

\bibitem{meyerbrandis_proske_10}
T.~Meyer-Brandis and F.~Proske, ``Construction of strong solutions of {SDE}'s
  via {M}alliavin calculus,'' {\em Journal of Functional Analysis}, vol.~258,
  no.~11, pp.~3922 -- 3953, 2010.

\bibitem{banos_nilssen_proske_19}
D.~Ba{\~n}os, T.~Nilssen, and F.~Proske, ``Strong existence and higher order
  {F}r{\'e}chet differentiability of stochastic flows of fractional {B}rownian
  motion driven {SDE}s with singular drift,'' {\em Journal of Dynamics and
  Differential Equations}, 2019.

\bibitem{davie_07}
A.~M. Davie, ``Uniqueness of solutions of stochastic differential equations,''
  {\em International Mathematics Research Notices}, vol.~2007, 01 2007.
\newblock rnm124.

\bibitem{galeati_gubinelli_20_Prevalence}
L.~Galeati and M.~Gubinelli, ``Prevalence of $\rho$-irregularity and related
  properties.'' Available at: \url{https://arxiv.org/abs/2004.00872}, 2020.

\bibitem{gerencser_20}
M.~Gerencsér, ``Regularisation by regular noise'' , {\em Stoch. PDE: Anal. Comp.}, (2022), Springer. 

\bibitem{flandoli_gubinelli_priola_11}
F.~Flandoli, M.~Gubinelli, and E.~Priola, ``Full well-posedness of point vortex
  dynamics corresponding to stochastic {2D E}uler equations,'' {\em Stochastic
  Processes and their Applications}, vol.~121, no.~7, pp.~1445--1463, 2011.

\bibitem{marchioro_pulvirenti_94}
C.~Marchioro and M.~Pulvirenti, {\em {Mathematical Theory of Incompressible
  Nonviscous Fluids}}.
\newblock Springer-Verlag, 1994.

\bibitem{chaudru_frikha_21_backward}
P.-E. Chaudru~de Raynal and N.~Frikha, ``From the backward {K}olmogorov {PDE}
  on the {W}asserstein space to propagation of chaos for {M}c{K}ean-{V}lasov
  {SDE}s,'' {\em J. Math. Pures Appl. (9)}, vol.~156, pp.~1--124, 2021.

\bibitem{chaudru_frikha_22_wasserstein_PDE}
P.-E. Chaudru~de Raynal and N.~Frikha, ``Well-posedness for some non-linear
  {SDE}s and related {PDE} on the {W}asserstein space,'' {\em J. Math. Pures
  Appl. (9)}, vol.~159, pp.~1--167, 2022.

\bibitem{lacker_21_hierarchies}
D.~Lacker, ``Hierarchies, entropy, and quantitative propagation of chaos for
  mean field diffusions,'' 2021.

\bibitem{han_22_entropic}
Y.~Han, ``Entropic propagation of chaos for mean field diffusion with $l^p$
  interactions via hierarchy, linear growth and fractional noise,'' 2022.

\bibitem{marx_20}
V.~Marx, ``Infinite-dimensional regularization of mckean-vlasov equation with a
  wasserstein diffusion,'' 2020.

\bibitem{GalHarMay_22_benchmark}
L.~Galeati, F.~A. Harang, and A.~Mayorcas, ``Distribution dependent {SDE}s
  driven by additive continuous noise,'' {\em Electron. J. Probab.}, vol.~27,
  pp.~Paper No. 37, 38, 2022.

\bibitem{GalHarMay_22fBm}
L.~Galeati, F.~A. Harang, and A.~Mayorcas, ``{Distribution dependent SDEs
  driven by additive fractional Brownian motion},'' {\em Probability Theory and
  Related Fields}, 2022.

\bibitem{cass_lyons_14}
T.~Cass and T.~Lyons, ``{Evolving communities with individual preferences},''
  {\em Proceedings of the London Mathematical Society}, vol.~110, pp.~83--107,
  08 2014.

\bibitem{bailleul_catellier_delarue_20_solving}
I.~Bailleul, R.~Catellier, and F.~Delarue, ``Solving mean field rough
  differential equations,'' {\em Electron. J. Probab.}, vol.~25, pp.~Paper No.
  21, 51, 2020.

\bibitem{bailleul_catellier_delarue_21_propagation}
I.~Bailleul, R.~Catellier, and F.~Delarue, ``Propagation of chaos for mean
  field rough differential equations,'' {\em Ann. Probab.}, vol.~49, no.~2,
  pp.~944--996, 2021.

\bibitem{coghi_nilssen_21_RoughNonlocal}
M.~Coghi and T.~Nilssen, ``Rough nonlocal diffusions,'' {\em Stochastic
  Process. Appl.}, vol.~141, pp.~1--56, 2021.

\bibitem{coghi_deuschel_friz_maurelli_20}
M.~{Coghi}, J.-D. {Deuschel}, P.~{Friz}, and M.~{Maurelli}, ``Pathwise
  {McKean--Vlasov} theory with additive noise,'' {\em Ann. Appl. Probab.},
  vol.~30, pp.~2355--2392, 10 2020.

\bibitem{tanaka_84}
H.~Tanaka, ``Limit theorems for certain diffusion processes with interaction,''
  in {\em {Stochastic Analysis}} (K.~It\^o, ed.), vol.~32 of {\em North-Holland
  Mathematical Library}, pp.~469 -- 488, Elsevier, 1984.

\bibitem{harang_ling_21_VoltLevy}
F.~A. Harang and C.~Ling, ``{Regularity of Local Times Associated with
  Volterra–L{\'{e}}vy Processes and Path-Wise Regularization of Stochastic
  Differential Equations},'' {\em Journal of Theoretical Probability}, 2021.

\bibitem{bahouri_chemin_danchin_11}
H.~Bahouri, J.-Y. Chemin, and R.~Danchin, {\em Fourier Analysis and Nonlinear
  Partial Differential Equations}.
\newblock Springer, 2011.

\bibitem{geman_horowitz_80}
D.~Geman and J.~Horowitz, ``Occupation densities,'' {\em The Annals of
  Probability}, vol.~8, no.~1, pp.~1--67, 1980.

\bibitem{hunt_94}
B.~R. Hunt, ``The prevalence of continuous nowhere differentiable functions,''
  {\em Proceedings of the American Mathematical Society}, vol.~122, no.~3,
  pp.~711--717, 1994.

\bibitem{galeati_21_NLY}
L.~Galeati, ``{Nonlinear Young Differential Equations: A Review},'' {\em
  Journal of Dynamics and Differential Equations}, 2021.

\bibitem{friz_hairer_14}
P.~K. Friz and M.~Hairer, {\em A course on rough paths}.
\newblock Springer International Publishing, 2014.

\bibitem{ambrosio_gigli_savare_08}
L.~Ambrosio, N.~Gigli, and G.~Savare, {\em {Gradient Flows: In Metric Spaces
  and in the Space of Probability Measures}}.
\newblock Birkh{\"a}user, 2008.

\bibitem{villani_03}
C.~Villani, {\em {Topics in Optimal Transportation}}.
\newblock Graduate studies in mathematics, American Mathematical Society, 2003.

\bibitem{biler_tadeeusz_93}
P.~Biler and T.~Nadzieja, ``A class of nonlocal parabolic problems occurring in
  statistical mechanics,'' {\em Colloquium Mathematicae}, vol.~66, no.~1,
  pp.~131--145, 1993.

\bibitem{cattiaux_delebecque_pedesches_18}
P.~Cattiaux, F.~Delebecque, and L.~Pédèches, ``Stochastic {Cucker–-Smale}
  models: Old and new,'' {\em Ann. Appl. Probab.}, vol.~28, pp.~3239--3286, 10
  2018.

\bibitem{theil_05}
F.~Theil, ``A proof of crystallization in two dimensions,'' {\em Communications
  in Mathematical Physics}, vol.~262, no.~1, pp.~209--236, 2005.

\bibitem{deluca_friesecke_18}
L.~De~Luca and G.~Friesecke, ``Crystallization in two dimensions and a discrete
  {G}auss--{B}onnet theorem,'' {\em Journal of Nonlinear Science}, vol.~28,
  no.~1, pp.~69--90, 2018.

\bibitem{dobrushin_kotecky_shlosman_92}
R.~L. Dobrushin, R.~Koteck{\'y}, and S.~Shlosman, {\em {Wulff Construction: A
  Global Shape from Local Interaction}}.
\newblock American Mathematical Society, 1992.

\bibitem{hormander_03}
L.~H\"{o}rmander, {\em {The Analysis of Linear Partial Differential Operators.
  {I}}}.
\newblock Classics in Mathematics, Springer-Verlag, Berlin, 2003.

\end{thebibliography}
\end{document}